\documentclass[10pt]{amsart}
\usepackage[cp1250]{inputenc}
\usepackage{latexsym}
\usepackage{amsmath}
\usepackage{amssymb}
\usepackage{amsthm}
\usepackage{amsbsy}
\usepackage{multirow}

\usepackage{soul} 
\usepackage{xcolor}





\pagestyle{plain}

\newtheorem{Theorem}{Theorem}[section]
\newtheorem{Lemma}{Lemma}[section]
\newtheorem{Proposition}{Proposition}[section]
\newtheorem{Corollary}{Corollary}[section]
\newtheorem{Remark}{Remark}[section]

\begin{document}

\title{Central Limit Theorems and Minimum-Contrast Estimators for Linear Stochastic Evolution Equations}

\author{Pavel K\v r\' i\v z}
\address{University of Chemistry and Technology,  Prague, Department of Mathematics, Technick\' a 5, Prague 6, Czech Republic}
\email{pavel.kriz@vscht.cz}

\author{Bohdan Maslowski}
\address{Charles University in Prague, Faculty of Mathematics and Physics, Sokolovsk\' a 83, Prague 8, Czech Republic}
\email{maslow@karlin.mff.cuni.cz}
\thanks{The work of B. Maslowski was partially supported by the GACR Grant no. 15-08819S}

\subjclass[2010]{60H15, 60G22, 62F12}

\keywords{Stochastic evolution equations, fractional Ornstein-Uhlenbeck process, parameter estimation, asymptotic normality, Wiener chaos}

\date{28 May 2018}


\begin{abstract}
Central limit theorems and asymptotic properties of the minimum-contrast estimators of the drift parameter in linear stochastic evolution equations driven by fractional Brownian motion are studied. Both singular ($H < \frac{1}{2})$ and regular ($H > \frac{1}{2})$ types of fractional Brownian motion are considered. Strong consistency is achieved by ergodicity of the stationary solution. The fundamental tool for the limit theorems and asymptotic normality (shown for Hurst parameter $H < \frac{3}{4}$) is the so-called $4^{th}$ moment theorem considered on the second Wiener chaos. This technique provides also the Berry-Esseen-type bounds for the speed of the convergence. The general results are illustrated for parabolic equations with distributed and pointwise fractional noises.
\end{abstract}

\maketitle

\section{Introduction}

Estimation of the drift parameter in linear stochastic PDEs with additive noise is a problem that is both well-motivated by practical needs and theoretically challenging. Two pioneering works on this topic -- \cite{Kosky-Loges} by Koski, Akademi and Loges; and \cite{Huebner-Rozovskii1995} by Huebner and Rozovskii -- both considered Wiener process as the source of noise (i.e. the white noise in time). In \cite{Kosky-Loges} the minimum contrast (MC) estimator of the drift parameter was derived and its time asymptotics were studied (namely strong consistency and asymptotic normality). The paper \cite{Huebner-Rozovskii1995} constitutes the maximum likelihood estimator (MLE) of the drift parameter and studies conditions for strong consistency and asymptotic normality with increasing number of observed dimensions (the space asymptotics). Several other works dealing with estimation in the white noise setting followed. \\

The literature on parameter estimation in the drift term for SPDEs driven by a \textbf{fractional} Brownian motion (fBm), which is capable to generate noise coloured in time, is rather limited. The work \cite{Cialenco-Lototsky-Pospisil2009} develops the space asymptotics for the MLE in the fBm setting, but only in regular case when the Hurst parameter $H\geq \frac{1}{2}$. In \cite{MaPo-Ergo} the strong consistency of the MC estimator is proved considering fBm with trace-class covariance operator as the driving noise. Another contribution in this direction are the papers \cite{Maslowski-Tudor2013} and \cite{Balde-Sebayi-Tudor2018} dealing with the least squares estimators constructed from the one-dimensional projection of the mild solution to the linear SPDEs driven by fBm  and integrated fBm, respectively. Both works assume the Hurst parameter $1/2 \leq H \leq 3/4$. To authors' best knowledge, no proof of asymptotic normality of the MC estimator in the fBm setting has been published so far.\\


The strong consistency of the estimator follows from the appropriate strong law of large numbers (in the case of fBm-driven equations see e.g.  \cite{MaPo-Ergo} or \cite{Maslowski-Tudor2013}) while for the proof of asymptotic normality a central limit theorem-type result is needed, which may be viewed as a more challenging problem in the present case. Both problems are, of course, of independent interest.\\

However, recently developed theory of the Malliavin calculus and the Stein's method (widely known as the $4^{th}$ moment theorems) provides a very powerful tool for studying limit theorems of the Gaussian-subordinate sequences/processes, and, consequently, of the MC estimators. The idea of the $4^{th}$ moment theorem was first presented in \cite{Nualart-Pecatti2005} and further developed in many papers (see for example \cite{Bierme_et_al 2012} or \cite{Nourdin Peccati 2015} and references therein). \\

Applications of the Malliavin calculus to study asymptotic normality of the MC estimators of the drift parameters in one-dimensional linear SDEs driven by fBm were first presented in \cite{Hu-Nualart2010} (the MC estimator is called "Alternative estimator" there), but it covers only regular case $\frac{1}{2} < H < \frac{3}{4}$ and continuous-time observations. The results were later generalized for arbitrary $H\in (0,1)$ and for both continuous-time and discrete-time observations (combination of increasing time-horizon and observation frequency considered) in \cite{Hu-Nualart-Zhou2017}.  It was demonstrated there that asymptotic normality is violated for $H > \frac{3}{4}$. Similar approach used in more general setting of the fractional Vasicek model was presented in \cite{Xiao-Yu2018_1} and \cite{Xiao-Yu2018_2}, where both ergodic and non-ergodic cases are considered. Recently, the $4^{th}$ moment theorem was successfully utilized to demonstrate not only asymptotic normality, but also to establish the speed of the convergence to the normal distribution (Berry-Esseen-type of bounds) of the MC estimator of the drift parameter in one-dimensional SDEs driven by fBm - see \cite{SebaiyViens2016} for discrete-time observations with increasing time-horizon and fixed mesh or \cite{Sottinen-Vitasaary} for continuous-time observations or discrete-time observations with combination of increasing time-horizon and observation frequency.\\

In this paper, we study the asymptotic properties (in time) of the MC estimator of the drift parameter in infinite dimensional linear stochastic equations driven by a fBm (the solutions represent infinite dimensional fractional Ornstein-Uhlenbeck processes). We consider both continuous-time observations and discrete-time observations with increasing time-horizon and fixed mesh. We believe that most of the results concerning the speed of convergence in limit theorems are new also in the classical case of equations driven by a standard cylindrical Wiener process. The noise term in the equation may contain an unbounded operator, which makes the general results applicable to parabolic SPDEs with pointwise or boundary noise (see e.g. Example \ref{ex: parab. eq. point noise}). In section \ref{sec: Strong consistency}, we revise and generalize the proof of its strong consistency. Strong consistency without assuming that the covariance operator of the driving noise is trace-class is proved, which extends a result from \cite{MaPo-Ergo}. This generalization thus covers the basic equations with white noise in space (cylindrical fBm) and many others. Motivated by the recent development, we apply the techniques based on the $4^{th}$ moment theorem to prove asymptotic normality and construct Berry-Esseen-type bounds on the speed of the convergence. In section \ref{sec: Normal approximation for Gaussian-subordinated sequences/processes} we generalize the essential theory related to the $4^{th}$ moment theorem to the processes with continuous time and with values possibly in infinite dimensional Hilbert spaces. To authors' best knowledge, this approach has been studied only for real-valued random sequences with discrete time so-far. For continuously observed real-valued fractional Ornstein-Uhlenbeck processes, different approach based on the variances of the Malliavin derivatives of their Skorohod integrals was presented in \cite{Hu-Nualart-Zhou2017}.In section \ref{sec: Berry-Esseen bounds for sample moments} we apply this theory to construct the Berry-Esseen-type bounds for sample moments of the observed solution. These can be utilized for statistical inference on the drift parameter (such as confidence intervals or hypothesis testing) after reformulation in terms of the second moment. Finally, in section \ref{sec: Berry-Esseen bounds for estimators}, the Berry-Esseen bounds for MC estimators are constructed and asymptotic normality is proved. All the results on asymptotic normality and Berry-Esseen bounds assume that the Hurst parameter of the driving fBm fulfills $H < \frac{3}{4}$. Hence, both regular and singular cases are covered. Note that if $H > \frac{3}{4}$ the asymptotic normality cannot be expected  in general (cf. Remark \ref{rem: non-CLT}). Section \ref{sec: examples} contains two examples illustrating the general results: Stochastic linear parabolic equations with fractional distributed and pointwise noises, respectively (Examples \ref{ex: parab. eq. distr noise} and \ref{ex: parab. eq. point noise}).\\

Let us note that in practice it is often important to estimate the parameter $H$. In one-dimensional case it has been studied in many works, for example, in \cite{Brouste-Iacus2013} and references therein. These methods are, in principle, applicable also in the present case (for instance, by taking projections of solutions to finite-dimensional subspaces). However, we do not address this problem here.

\section{Basic setting and preliminaries}
Consider a linear stochastic evolution equation in a separable Hilbert space $\mathcal{V}$, which is driven by fractional Brownian motion in a separable Hilbert space $U$:
\begin{equation} \label{eq: SPDE}
\begin{aligned}
dX(t) &= \alpha A X(t) dt + \Phi dB^{H}(t), \quad t >0\\
\end{aligned}
\end{equation}
\begin{equation} \label{eq: init_Cond}
 X(0) = X_0.
\end{equation}

In this equation, $\alpha > 0$ is an \textbf{unknown} parameter and the linear operator $A: Dom(A) \subset \mathcal{V} \mapsto \mathcal{V}$ generates an analytic semigroup $(S(t), t \geq 0)$. Taking $\beta >0$ large enough so that the operator $\beta I-A$ is strictly positive, denote by $D_A^{\delta}$ the domain of the fractional power $(\beta I-A)^{\delta},\ \delta >0$, equipped with the graph norm $|\cdot |_{D_A^{\delta}} = |(\beta I-A)^{\delta}\cdot |_{\mathcal{V}}$ (in the sequel, $\beta$ is supposed to be fixed). Furthermore, let $(B^{H}(t), t \in \mathbb{R})$ be a standard (two-sided) cylindrical fractional Brownian motion on $U$ with Hurst parameter $H \in (0,1)$, defined on a complete probability space $(\Omega,\mathcal{F},\mathbb{P})$. The initial condition $X_0$ is assumed to be a $\mathcal{V}$-valued random variable such that $\mathbb{E}|X_0|^2_{\mathcal{V}}<\infty$. The noise term satisfies the following condition:\\
\begin{equation} \tag{A0}\label{eq: A0}
\begin{aligned}
& \Phi: U\supset Dom(\Phi) \to (Dom(A^*))', \text{ the dual of } Dom(A^*)  \text{ with respect to }\\
& \text {the topology of } \mathcal{V},\text{ and } (\beta I - A)^{\epsilon - 1} \Phi \in \mathcal{L} (U,\mathcal{V)} \text{ for a given } \epsilon \in (0,1].
\end{aligned}
\end{equation}

Notice that in the simplest case when \eqref{eq: A0} is satisfied with $\epsilon =1$ (equivalently $\Phi \in \mathcal{L}(U,\mathcal{V})$) and $H> 1/2$ the (strongly continuous) semigroup $(S(t), t \geq 0)$ need not be analytic. This is the most usual case which holds true in standard examples (for instance, stochastic parabolic PDE with distributed fractional noise, cf. Example \ref{ex: parab. eq. distr noise}). On the other hand, in some situations (stochastic parabolic PDE with pointwise or boundary fractional noise, cf. Example \ref{ex: parab. eq. point noise}) the value of $\epsilon$ must be chosen strictly smaller than one (see e.g. \cite{Maslowski_boundary pointwise} for more general results).\\

Obviously, $\alpha A$ generates an analytic semigroup $(S_\alpha(t), t \geq 0)$, where $S_\alpha(t) = S(\alpha t)$.\\

Existence of the $\mathcal{V}$-valued mild solution is established in the following proposition (for proofs, see papers \cite{DuncanMaslowski2002} for $H > \frac{1}{2}$ and \cite{DuncanMaslowski2006} for $H < \frac{1}{2}$).

\begin{Proposition}\label{prop: existence of solut}
Assume \eqref{eq: A0} and
\begin{equation} \tag{A1}\label{eq: A1}
\begin{aligned}
|S(t)\Phi|_{\mathcal{L}_2}\leq ct^{-\gamma}, \quad \forall t \in (0,T],
\end{aligned}
\end{equation}
for some  $T>0, c>0 \text{ and } \gamma \in [0,H)$, where $|.|_{\mathcal{L}_2}$ denotes the Hilbert-Schmidt norm on $\mathcal{V}$. Then
\begin{equation} \label{eq: general_solut}
X(t) = S_\alpha(t) X_0 + \int_{0}^{t}S_\alpha(t-u)\Phi dB^H(u)
\end{equation}
is a well-defined $\mathcal{V}$-valued process with continuous paths in $\mathcal{C}([0,T],\mathcal{V})$ and is called the mild solution to the equation \eqref{eq: SPDE} satisfying the initial condition \eqref{eq: init_Cond}.
\end{Proposition}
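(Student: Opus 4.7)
The plan is to split $X(t) = S_\alpha(t) X_0 + Z(t)$, where $Z(t) := \int_0^t S_\alpha(t-u)\Phi\, dB^H(u)$ is the stochastic convolution. Since $(S_\alpha(t), t \geq 0)$ inherits strong continuity from the analytic semigroup $(S(t))$ and $X_0 \in L^2(\Omega;\mathcal{V})$, the deterministic summand is already $\mathcal{V}$-valued with continuous trajectories. So the real content is giving meaning to $Z(t)$ as a $\mathcal{V}$-valued random variable and producing a continuous modification.

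For the regular range $H > 1/2$, I would recall that the Wiener integral $\int_0^T \psi(u)\, dB^H(u)$ of an operator-valued function $\psi : [0,T] \to \mathcal{L}_2(U,\mathcal{V})$ is defined as the $L^2$-closure of integrals of step functions, and takes values in $\mathcal{V}$ whenever
\[
\int_0^T \!\! \int_0^T \bigl|\psi(u)\bigr|_{\mathcal{L}_2}\, \bigl|\psi(v)\bigr|_{\mathcal{L}_2}\, |u-v|^{2H-2}\, du\, dv < \infty .
\]
Applying \eqref{eq: A1}, which gives $|S_\alpha(t-u)\Phi|_{\mathcal{L}_2} \leq c(\alpha(t-u))^{-\gamma}$ with $\gamma \in [0,H)$, the double integral is dominated by $\int_0^t \!\int_0^t (t-u)^{-\gamma}(t-v)^{-\gamma}|u-v|^{2H-2}\, du\, dv$, which converges by a Beta-type estimate thanks to $\gamma < H$; this fixes $Z(t) \in L^2(\Omega;\mathcal{V})$.

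For the singular range $H < 1/2$, I would rewrite the cylindrical fBm through its Volterra representation $B^H(t) = \int_0^t K_H(t,s)\, dW(s)$ for a cylindrical Wiener process $W$. The integral of $\psi$ against $B^H$ is then defined via the adjoint transfer operator $K_H^*$, and belongs to $\mathcal{V}$ provided $K_H^*\psi \in L^2(0,T;\mathcal{L}_2(U,\mathcal{V}))$. Assumption \eqref{eq: A1}, together with the analytic-semigroup smoothing $|(\beta I - A)^{\delta} S(t)|_{\mathcal{L}(\mathcal{V})} \leq C t^{-\delta}$ and the factorization $\Phi = (\beta I - A)^{1-\epsilon}\cdot[(\beta I - A)^{\epsilon-1}\Phi]$ encoded in \eqref{eq: A0}, allows one to estimate $K_H^*[S_\alpha(t-\cdot)\Phi]$ in the Hilbert-Schmidt norm directly from the explicit form of $K_H$ and conclude the required integrability.

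For path continuity I would apply the Kolmogorov criterion to the Gaussian process $Z$: split
\[
Z(t) - Z(s) = \int_s^t S_\alpha(t-u)\Phi\, dB^H(u) + \int_0^s \bigl[S_\alpha(t-u) - S_\alpha(s-u)\bigr]\Phi\, dB^H(u),
\]
estimate each piece in $\mathcal{V}$-norm by the same covariance identity as above, and combine with the analyticity bound $|(I - S(h))(\beta I - A)^{-\eta}|_{\mathcal{L}(\mathcal{V})} \leq C h^\eta$ for $\eta$ slightly below $H - \gamma$. This yields $\mathbb{E}|Z(t) - Z(s)|_{\mathcal{V}}^2 \leq C|t-s|^{2\nu}$ for some $\nu > 0$, Gaussian hypercontractivity upgrades it to arbitrarily high moments, and Kolmogorov-Chentsov then produces the continuous modification. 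The main obstacle, especially when $H < 1/2$, is handling the unboundedness of $\Phi$ from \eqref{eq: A0} jointly with $K_H^*$: one must choose the intermediate fractional domain $D_A^{\epsilon-1}$ so that $(\beta I - A)^{\epsilon-1}\Phi$ is bounded while the semigroup factor inside $K_H^*$ absorbs the singularity, which is precisely the technical core of \cite{DuncanMaslowski2002} and \cite{DuncanMaslowski2006}.
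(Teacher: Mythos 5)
Your outline is sound and matches the paper's treatment: the paper does not prove this proposition at all, but simply cites \cite{DuncanMaslowski2002} for $H>\frac{1}{2}$ and \cite{DuncanMaslowski2006} for $H<\frac{1}{2}$, and your decomposition into $S_\alpha(t)X_0$ plus the stochastic convolution, the covariance-double-integral argument in the regular case, the Volterra/transfer-operator construction in the singular case, and Kolmogorov--Chentsov for continuity is exactly the strategy carried out in those references. The only place where your sketch stays schematic is the same place you (and the paper) defer to the cited works, namely the estimate of $K_H^*[S_\alpha(t-\cdot)\Phi]$ for $H<\frac{1}{2}$, which requires controlling the difference term $|S(t)\Phi-S(s)\Phi|_{\mathcal{L}_2}$ coming from the fractional derivative in $K_H^*$ via analytic smoothing together with \eqref{eq: A0}--\eqref{eq: A1}; this is consistent with the level of detail the paper itself provides.
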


The following theorem (established in \cite{MaPo-Ergo} and \cite{MaPo-Ergo2}) ensures existence and ergodicity of stationary solutions.

\begin{Proposition}\label{prop: existence of ergodic solut}
Assume \eqref{eq: A0}, \eqref{eq: A1} and
\begin{equation} \tag{A2}\label{eq: A2}
\begin{aligned}
|S(t)|_{\mathcal{L}}\leq M e^{-\rho t}, \quad \forall t > 0,\\
\end{aligned}
\end{equation}
for some constants $ M>0 \text{ and } \rho > 0$, where $|.|_{\mathcal{L}}$ denotes the operator norm. Then there exists a strictly stationary continuous solution to $\eqref{eq: SPDE}$, i.e. there exists an initial value $\tilde{X}$ (a $\mathcal{V}$-valued random variable) such that the solution
\begin{equation} \label{eq: stat_solut}
Z(t) = S_\alpha(t) \tilde{X} + \int_{0}^{t}S_{\alpha}(t-u)\Phi dB^H(u), \quad t \geq 0,
\end{equation}
is a strictly stationary process with continuous paths.\\

\noindent Moreover, under assumptions \eqref{eq: A0}- \eqref{eq: A2} the strictly stationary solution $(Z(t),t\geq 0)$ is ergodic, i.e. for any measurable functional $\varrho : \mathcal{V} \to \mathbb{R}$ satisfying $\mathbb{E}|\varrho(\tilde{X})|<\infty$, the following holds:
\begin{equation} \label{eq: Ergo_stat}
\begin{aligned}
&\lim_{T\to \infty} \frac{1}{T} \int_{0}^{T}\varrho(Z(t))dt = \int_{\mathcal{V}} \varrho(x) \mu(dx), \quad \text{ a.s.},\\
&\lim_{n\to \infty} \frac{1}{n} \sum_{i=1}^{n}\varrho(Z(i)) = \int_{\mathcal{V}} \varrho(x) \mu(dx), \quad \text{a.s.}
\end{aligned}
\end{equation}
where
\[
\mu = \text{Law}(\tilde{X}) = \text{Law}(Z(t)) = N(0,Q^{\alpha}_{\infty}), \quad \forall t\geq 0.
\]
\end{Proposition}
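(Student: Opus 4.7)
The plan is to construct $Z$ as a stochastic convolution extended over the whole past, and then read off stationarity, continuity and ergodicity from the Gaussian structure together with the exponential stability of the semigroup provided by \eqref{eq: A2}.

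First I would set $\tilde X := \int_{-\infty}^{0} S_\alpha(-u)\Phi\, dB^{H}(u)$ and $Z(t):=\int_{-\infty}^{t} S_\alpha(t-u)\Phi\, dB^{H}(u)$. To see these are well-defined $\mathcal{V}$-valued centred Gaussian vectors of finite second moment, I combine \eqref{eq: A1} near zero with \eqref{eq: A2} at infinity: for $t>T$, writing $S(t)\Phi=S(t-T)S(T)\Phi$ gives
\[
|S_\alpha(t)\Phi|_{\mathcal{L}_2}\le c M (\alpha T)^{-\gamma}\, e^{-\rho\alpha (t-T)},
\]
while for small $t$ one keeps the local bound $c(\alpha t)^{-\gamma}$. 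Feeding these into the $H>1/2$ isometry (the double integral against $H(2H-1)|u-v|^{2H-2}$) or the singular-case norm estimates of \cite{DuncanMaslowski2006} for $H<1/2$ yields $\mathbb{E}|\tilde X|^{2}_{\mathcal{V}}<\infty$ and hence $\mathbb{E}|Z(t)|^{2}_{\mathcal{V}}<\infty$.

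Second, splitting $\int_{-\infty}^{t}=\int_{-\infty}^{0}+\int_{0}^{t}$ and using the semigroup property shows $Z(t)=S_\alpha(t)\tilde X+\int_{0}^{t} S_\alpha(t-u)\Phi\, dB^{H}(u)$, so $Z$ is the mild solution of \eqref{eq: SPDE} with initial datum $\tilde X$. Strict stationarity follows by a change of variables $u\mapsto u+s$ combined with stationarity of the increments of $B^{H}$: the finite-dimensional distributions of $(Z(t_1+s),\dots,Z(t_n+s))$ coincide with those of $(Z(t_1),\dots,Z(t_n))$, since both are centred Gaussian with identical covariance. Continuity is obtained from Kolmogorov's criterion applied to this Gaussian process, using an estimate $\mathbb{E}|Z(t)-Z(s)|^{2}_{\mathcal{V}}\le C|t-s|^{2\eta}$ with some $\eta>0$, again a direct consequence of \eqref{eq: A1}-\eqref{eq: A2}. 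The marginal law is then identified as $\mu=N(0,Q^{\alpha}_{\infty})$, where $Q^{\alpha}_{\infty}$ is the covariance operator of $\tilde X$.

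Third, for ergodicity I would exploit Gaussianity: by Maruyama's criterion, a centred stationary Gaussian process is ergodic whenever its spectral measure is non-atomic, which is guaranteed by decay of the covariance operator in the lag. Since for $t\ge s$ the increment $Z(t)-S_\alpha(t-s)Z(s)$ is independent of $Z(s)$ and $|S_\alpha(t-s)|_{\mathcal{L}}\le M e^{-\rho\alpha(t-s)}$ by \eqref{eq: A2}, the cross-covariances $\mathbb{E}\langle Z(t),x\rangle_{\mathcal{V}}\langle Z(s),y\rangle_{\mathcal{V}}$ decay exponentially in $|t-s|$, forcing the required non-atomicity. Birkhoff's pointwise ergodic theorem then delivers both identities in \eqref{eq: Ergo_stat} simultaneously for every $\varrho\in L^{1}(\mu)$, where the second, discrete-time one is the ergodic theorem for the time-one shift on $(Z(n))_{n\in\mathbb{N}}$.

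The main technical obstacle is the rigorous treatment of the stochastic convolution in the singular regime $H<1/2$: one must show that the integrand $u\mapsto S_\alpha(t-u)\Phi$ lies in the appropriate Malliavin-type domain of the divergence integral on $(-\infty,t]$, with the potentially unbounded operator $\Phi$ controlled only through $(\beta I-A)^{\epsilon-1}\Phi\in\mathcal{L}(U,\mathcal{V})$ from \eqref{eq: A0}. Combining the singular kernel of $B^{H}$ with the analytic-semigroup smoothing $(\beta I-A)^{1-\epsilon}S_\alpha(t-u)$ is the delicate step; once this is in place, the remaining arguments reduce to standard Gaussian-process manipulations.
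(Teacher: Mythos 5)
The construction half of your proposal is fine and is essentially what the literature the paper relies on does: the paper gives no self-contained proof of this proposition, but simply cites Theorems 3.1 and 4.6 of \cite{MaPo-Ergo} and Theorems 3.1, 3.2 of \cite{MaPo-Ergo2}, where $\tilde X$ is indeed the stochastic convolution over the infinite past and stationarity, continuity and the identification of $\mu=N(0,Q^{\alpha}_{\infty})$ are obtained from the Gaussian structure together with \eqref{eq: A0}--\eqref{eq: A2}, just as in your first two steps.

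The genuine gap is in your ergodicity step. The claim that $Z(t)-S_\alpha(t-s)Z(s)$ is independent of $Z(s)$ is false for $H\neq\frac{1}{2}$: that random variable is the stochastic convolution of the noise increments on $(s,t]$, and increments of a fractional Brownian motion are correlated with the entire past, so neither independence nor even uncorrelatedness with $Z(s)$ is available (this is precisely the Markov/martingale structure that is lost when leaving $H=\frac{1}{2}$). As a consequence, the exponential decay of the cross-covariances you deduce from it is also wrong: the paper's Lemma \ref{lem: bound for Q} shows that the true decay is only polynomial, $|Q^{\alpha}_{\infty}(t)|_{\mathcal{L}_2}\leq C|t|^{2H-2}$, the slow part coming from the double-integral term in \eqref{eq: auto-covariance operator}, which admits no exponential bound. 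The argument is repairable: covariance decay to zero (at any rate) already forces the spectral measures of the real-valued stationary Gaussian processes $\langle Z(\cdot),x\rangle_{\mathcal V}$ to be atomless, indeed it gives mixing of the associated measure-preserving flow, and mixing also yields ergodicity of the time-one map, which is what the discrete-time half of \eqref{eq: Ergo_stat} requires (ergodicity of the continuous-time flow alone would not imply ergodicity of the sampled sequence $(Z(n))_{n}$, a point you pass over when invoking Birkhoff for both statements at once). So the conclusion stands, but as written the decisive decay estimate rests on an independence property that the fractional noise does not have, and the claimed rate is incorrect.
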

\begin{proof}
See proofs of Theorems 3.1, 4.6 in \cite{MaPo-Ergo} and  Theorems 3.1, 3.2 in \cite{MaPo-Ergo2}. Note that the discrete-time ergodicity can be shown by same means as continuous-time ergodicity.
\end{proof}

In \cite{MaPo-Ergo} it is shown that
\begin{equation}\label{eq: Q^aplha}
Q^{\alpha}_{\infty} = \frac{1}{\alpha^{2H}}Q_{\infty},
\end{equation}
where $Q_{\infty}$ is the covariance operator of the invariant measure in the case $\alpha = 1$.
The minimum contrast estimator is based on the ergodic  behaviour of the  solutions to the equation $\eqref{eq: SPDE}$. The ergodicity of the stationary solution implies the following (almost sure) convergence of the sample second moments:
\[
\lim_{T\to \infty} \frac{1}{T} \int_{0}^{T}|Z(t)|^2_\mathcal{V} dt = \int_{\mathcal{V}} |x|^2_\mathcal{V} \mu(dx) = \mathbb{E}|Z(t)|^2_\mathcal{V} = {\rm Tr}(Q^{\alpha}_{\infty}) = \frac{1}{\alpha^{2H}} {\rm Tr}(Q_{\infty}).
\]
Similar behaviour of the (non-stationary) solution $(X(t), t \geq 0)$ for large $t$ motivates the construction of the minimum-contrast estimator of the parameter $\alpha$. Assume that
\begin{equation}\label{eq: Phi non zero}
\Phi \neq 0,
\end{equation}
to ensure that ${\rm Tr}(Q_{\infty}) \neq 0$ and the following minimum contrast estimator is well defined:

\begin{equation} \label{eq: MC_est_continuous}
\hat{\alpha}_{T} := \biggl( \frac{\frac{1}{T}\int_{0}^{T}|X(t)|_{\mathcal{V}}^{2}dt}{{\rm Tr} (Q_{\infty})} \biggr)^{-\frac{1}{2H}}.
\end{equation}
Similarly, for observations in discrete time instants with fixed step sizes define
\begin{equation} \label{eq: MC_est_discrete}
\check{\alpha}_{n} := \biggl( \frac{\frac{1}{n}\sum_{i=1}^{n}|X(i)|_{\mathcal{V}}^{2}}{{\rm Tr} (Q_{\infty})} \biggr)^{-\frac{1}{2H}}.
\end{equation}
Obviously, in the degenerate case $\Phi = 0$ the stationary solution is constantly zero and the parameter is not identifiable.

We may also consider the case when only a finite-dimensional projection of the solution $(X(t), t \geq 0)$ is observed, for instance, the process $(\langle X(t),w \rangle_\mathcal{V}, t \geq 0)$ for a given vector $w \in \mathcal{V}$, such that
\begin{equation}
\langle Q_{\infty} w,w\rangle_\mathcal{V} \neq 0.
\end{equation}
We obtain the estimators
\begin{equation} \label{eq: proj_est_continuous}
\tilde{\alpha}_{T} := \biggl( \frac{\frac{1}{T}\int_{0}^{T}|\langle X(t),w \rangle_\mathcal{V}|^{2}dt}{\langle Q_{\infty} w,w\rangle_\mathcal{V}} \biggr)^{-\frac{1}{2H}},
\end{equation}
and for observations in discrete time instants with fixed step sizes
\begin{equation} \label{eq: proj_est_discrete}
\bar{\alpha}_{n} := \biggl( \frac{\frac{1}{n}\sum_{i=1}^{n}|\langle X(i),w \rangle_\mathcal{V}|^{2}}{\langle Q_{\infty} w,w\rangle_\mathcal{V}} \biggr)^{-\frac{1}{2H}}.
\end{equation}

\section{Strong consistency}\label{sec: Strong consistency}
Note that in \cite{MaPo-Ergo} the strong consistency of the estimators \eqref{eq: MC_est_continuous} and \eqref{eq: proj_est_continuous} is proved for fractional Brownian motion ($H \in (0,1)$), but only for $\Phi$ being Hilbert-Schmidt. The approach below, based on Taylor's approximation, enables to show the strong consistency for a general linear operator $\Phi$ satisfying \eqref{eq: A0}. In the following exposition, we shall focus on the continuous-time case. The discrete-time case is analogous.\\

Let us first state a simple technical lemma, the proof of which is obvious:
\begin{Lemma} \label{lem: means_to_zero}
Consider continuous functions $g,h:[0,+\infty) \mapsto \mathbb{R}$ such that
\begin{itemize}
    \item $\frac{1}{T} \int_{0}^{T} |g(t)| dt \overset{T \to \infty}{\longrightarrow} K < \infty $; and
    \item $h(t) \overset{t \to \infty}{\longrightarrow} 0$ .
\end{itemize}
Then
\[
\frac{1}{T} \int_{0}^{T} g(t)h(t) dt \overset{T \to \infty}{\longrightarrow} 0.
\]
\end{Lemma}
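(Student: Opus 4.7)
The plan is to use a standard $\epsilon$-splitting argument, exploiting the decay of $h$ on a tail interval and the boundedness of the Ces\`aro average of $|g|$.

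First I would fix $\epsilon>0$. Since $h(t)\to 0$, there is $T_0=T_0(\epsilon)$ such that $|h(t)|<\epsilon$ for all $t\ge T_0$. For $T>T_0$ I split
\[
\frac{1}{T}\int_0^T g(t)h(t)\,dt \;=\; \frac{1}{T}\int_0^{T_0} g(t)h(t)\,dt \;+\; \frac{1}{T}\int_{T_0}^{T} g(t)h(t)\,dt.
\]
The first piece tends to $0$ as $T\to\infty$ because $g,h$ are continuous on $[0,T_0]$, so $\int_0^{T_0}|g(t)h(t)|\,dt$ is a finite constant independent of $T$.

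For the second piece, the bound $|h(t)|<\epsilon$ on $[T_0,T]$ gives
\[
\left|\frac{1}{T}\int_{T_0}^{T} g(t)h(t)\,dt\right| \;\le\; \epsilon\cdot \frac{1}{T}\int_{T_0}^{T}|g(t)|\,dt \;\le\; \epsilon\cdot \frac{1}{T}\int_{0}^{T}|g(t)|\,dt.
\]
By the first assumption the right-hand side converges to $\epsilon K$ as $T\to\infty$.

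Combining the two, $\limsup_{T\to\infty}\bigl|\tfrac{1}{T}\int_0^T g(t)h(t)\,dt\bigr|\le \epsilon K$, and letting $\epsilon\downarrow 0$ yields the claim. There is no genuine obstacle here; the only thing to keep track of is that the "small" constant $T_0$ depends on $\epsilon$ but not on $T$, so the finite initial segment is harmless after division by $T$. The continuity hypothesis is used solely to guarantee integrability on the compact interval $[0,T_0]$.
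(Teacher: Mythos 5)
Your argument is correct: the $\epsilon$-splitting at $T_0(\epsilon)$, the bound $\epsilon\cdot\frac{1}{T}\int_0^T|g|\,dt\to\epsilon K$ on the tail, and the vanishing of the fixed initial piece after division by $T$ together give $\limsup\le\epsilon K$ for every $\epsilon$, which is exactly what is needed. The paper states this lemma with the remark that its proof is obvious and supplies none, and your write-up is precisely the standard argument the authors are leaving to the reader.
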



To show the convergence of the sample second moment of the non-stationary solution, we utilize ergodicity of the stationary solution and the following theorem:

\begin{Theorem} \label{thm: ergo non_stat}
Let $X=(x_t, t \geq 0)$ and $Y=(y_t, t \geq 0)$ be two real-valued continuous random processes defined on a probability space $(\Omega, \mathcal{F}, \mathbb{P})$. Let $Y$ be strictly stationary and ergodic. Further let
\begin{equation}\label{eq: xt to yt}
|x_t-y_t| \overset{t \to \infty}{\longrightarrow} 0 \quad \text{a.s.}
\end{equation}
Consider a smooth function $g \in \mathcal{C}^{p}(\mathbb{R})$ for some integer $p \geq 1$. Denote its $k$-th derivative by $g^{(k)}$ and assume:

\begin{itemize}
    \item $\mathbb{E} |g^{(k)}(y_0) | < \infty; \; k = 0,1,..,p;$ and
    \item $g^{(p)}$ is globally Lipschitz. 
\end{itemize}
Then
\begin{equation}\label{eq: ergo non_stat 1}
\frac{1}{T} \int_{0}^{T} g(x_t) dt \overset{T \to \infty}{\longrightarrow} \mathbb{E}g(y_0) \quad \text{a.s.}
\end{equation}
\end{Theorem}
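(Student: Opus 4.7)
The plan is a Taylor expansion of $g(x_t)$ around $y_t$ up to order $p$, combined with Birkhoff's ergodic theorem and Lemma \ref{lem: means_to_zero} to control the resulting pieces. Since $g^{(p)}$ is globally Lipschitz with some constant $L$, the classical Taylor estimate (with the Lagrange remainder, rewritten as the difference $g^{(p)}(\xi) - g^{(p)}(y_t)$) gives the pointwise identity
\[
g(x_t) \;=\; \sum_{k=0}^{p} \frac{g^{(k)}(y_t)}{k!}(x_t - y_t)^k + R(x_t,y_t), \qquad |R(x_t,y_t)| \le \frac{L}{p!}\,|x_t - y_t|^{p+1}.
\]
Integrating over $[0,T]$ and dividing by $T$ splits $\frac{1}{T}\int_0^T g(x_t)\,dt$ into three pieces: the leading $k=0$ term, the $p$ cross terms $k=1,\dots,p$, and the remainder integral. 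I would treat these separately.

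For the leading term I would invoke Birkhoff's ergodic theorem for the strictly stationary, ergodic process $Y$; together with $\mathbb{E}|g(y_0)| < \infty$ this yields $\frac{1}{T}\int_0^T g(y_t)\,dt \to \mathbb{E} g(y_0)$ almost surely. To handle each cross term I would apply Lemma \ref{lem: means_to_zero} pathwise with the continuous functions $G_k(t) := g^{(k)}(y_t)$ and $h_k(t) := (x_t - y_t)^k$: the first hypothesis is secured by another application of Birkhoff to the integrable process $|g^{(k)}(y_t)|$ (whose time-average converges to $\mathbb{E}|g^{(k)}(y_0)| < \infty$), while $h_k(t) \to 0$ follows immediately from \eqref{eq: xt to yt}. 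For the Taylor remainder I would use Lemma \ref{lem: means_to_zero} once more, this time with the trivial choice $G \equiv 1$ and $h(t) := |x_t - y_t|^{p+1}$, which is continuous and tends to zero almost surely.

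The proof is largely bookkeeping; the only point requiring a little care is producing a single almost sure event on which all of the above pathwise statements hold simultaneously. This is handled by intersecting the full-probability events obtained from the $p+1$ applications of Birkhoff's theorem (to $G_k$, $k=0,\dots,p$) with the full-probability event on which \eqref{eq: xt to yt} holds; the intersection still has probability one. Summing the three limits then delivers \eqref{eq: ergo non_stat 1}.
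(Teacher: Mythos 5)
Your proposal is correct and follows essentially the same route as the paper: Taylor expansion of $g(x_t)$ around $y_t$, ergodicity of $Y$ to secure the convergence of the time averages of $|g^{(k)}(y_t)|$, and Lemma \ref{lem: means_to_zero} applied pathwise to make each term vanish. The only (cosmetic) difference is that you absorb the Lagrange remainder into the $k=p$ term and bound the leftover by $\frac{L}{p!}|x_t-y_t|^{p+1}$ using the Lipschitz constant, whereas the paper keeps the remainder in the form $\frac{g^{(p)}(\eta_t)}{p!}(x_t-y_t)^p$ with a random intermediate point $\eta_t$ and uses the Lipschitz condition to compare the time average of $|g^{(p)}(\eta_t)|$ with that of $|g^{(p)}(y_t)|$.
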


\begin{proof}
Ergodicity of $Y$ implies $\frac{1}{T} \int_{0}^{T} g(y_t) dt \overset{T \to \infty}{\longrightarrow} \mathbb{E}g(y_0) \quad \text{a.s.}$. It suffices to prove
\[
\biggl|\frac{1}{T} \int_{0}^{T}\biggl(g(x_t)- g(y_t)\biggr) dt \biggr| \overset{T \to \infty}{\longrightarrow} 0 \quad \text{a.s.}
\]
Now fix $\omega \in \Omega$ and apply Taylor's approximation with Lagrange remainder:
\begin{equation*}
\begin{aligned}
&\biggl|\frac{1}{T} \int_{0}^{T}\biggl(g(x_t(\omega))- g(y_t(\omega))\biggr) dt \biggr| \leq \frac{1}{T} \int_{0}^{T}\biggl|g(x_t(\omega))- g(y_t(\omega))\biggr| dt \\
& = \frac{1}{T} \int_{0}^{T}\biggl| \sum_{k=1}^{p-1} \frac{g^{(k)}(y_t(\omega))}{k!}(x_t(\omega)- y_t(\omega))^{k} +\frac{g^{(p)}(\eta_t(\omega))}{p!}(x_t(\omega)- y_t(\omega))^{p} \biggr| dt  \\
&\leq \sum_{k=1}^{p-1} \frac{1}{T} \int_{0}^{T}|g^{(k)}(y_t(\omega))| \frac{|x_t(\omega)- y_t(\omega)|^{k}}{k!} dt  + \frac{1}{T} \int_{0}^{T}|g^{(p)}(\eta_t(\omega))| \frac{|x_t(\omega)- y_t(\omega)|^{p}}{p!}dt,
\end{aligned}
\end{equation*}
where $\eta_t$ is a random point between $x_t$ and $y_t$.\\
Firstly, for almost all (a.a.) $\omega$ we have $\frac{1}{T} \int_{0}^{T}|g^{(k)}(y_t(\omega))|dt \overset{T \to \infty}{\longrightarrow} \mathbb{E}|g^{(k)}(y_0)| < \infty$ by ergodicity of $Y$ and $\frac{|x_t(\omega)- y_t(\omega)|^{k}}{k!}\overset{t \to \infty}{\longrightarrow} 0$ by \eqref{eq: xt to yt}. Hence, via Lemma \ref{lem: means_to_zero}, we obtain for a.a $\omega$
\[\frac{1}{T} \int_{0}^{T}|g^{(k)}(y_t(\omega))| \frac{|x_t(\omega)- y_t(\omega)|^{k}}{k!} dt \overset{T \to \infty}{\longrightarrow} 0, \text{ for $k=1,...,p-1$.}\]
Secondly,
\[\lim_{T\to \infty}\frac{1}{T} \int_{0}^{T}|g^{(p)}(\eta_t(\omega))|dt =\lim_{T\to \infty} \frac{1}{T} \int_{0}^{T}|(g^{(p)}(\eta_t(\omega))-g^{(p)}(y_t(\omega)))+g^{(p)}(y_t(\omega))| dt.\] However, the Lipschitz condition implies
\[|g^{(p)}(\eta_t(\omega))-g^{(p)}(y_t(\omega))|\leq C|\eta_t(\omega)-y_t(\omega)|\leq C|x_t(\omega)-y_t(\omega)| \overset{t \to \infty}{\longrightarrow} 0 \text{ for a.a. $\omega$.}\]
Hence, we obtain for a.a. $\omega$:
\[
\lim_{T\to \infty} \frac{1}{T} \int_{0}^{T}|(g^{(p)}(\eta_t(\omega))| dt = \lim_{T\to \infty} \frac{1}{T} \int_{0}^{T}|g^{(p)}(y_t(\omega))| dt = \mathbb{E}|g^{(p)}(y_0)| < \infty\]
by ergodicity of $Y$. If we combine this with the a.s. convergence
\[
\frac{|x_t(\omega)- y_t(\omega)|^{p}}{p!}\overset{t \to \infty}{\longrightarrow}0,
\]
 Lemma \ref{lem: means_to_zero} yields
\[
\lim_{T\to \infty}\frac{1}{T} \int_{0}^{T}|g^{(p)}(\eta_t(\omega))| \frac{|x_t(\omega)- y_t(\omega)|^{p}}{p!}dt = 0 \text{ for a.a. $\omega$.}
\]
\end{proof}

\begin{Corollary}\label{cor: convergence of non-stat moments}
Let \eqref{eq: A0}, \eqref{eq: A1} and \eqref{eq: A2} are satisfied and let $(X(t),t \geq 0)$ be the solution to (\ref{eq: SPDE}) - (\ref{eq: init_Cond}) with $X_0$ such that $\mathbb{E}|X_0|^2_{\mathcal{V}}<\infty$.
For any integer $p \geq 1$ we have that
\begin{equation}\label{eq: nonstat MC ergo 1}
\frac{1}{T} \int_{0}^{T} |X(t)|_{\mathcal{V}}^{p} dt \overset{T \to \infty}{\longrightarrow} \int_{\mathcal{V}} |y|_{\mathcal{V}}^{p} \mu(dy) \text{ a.s.}.
\end{equation}
Moreover, for any $w \in \mathcal{V}$
\begin{equation}\label{eq: nonstat proj ergo 1}
\frac{1}{T} \int_{0}^{T} |\langle X(t),w \rangle_{\mathcal{V}}|^{p} dt \overset{T \to \infty}{\longrightarrow} \int_{\mathcal{V}} |\langle y,w \rangle_{\mathcal{V}}|^{p} \mu(dy) \text{ a.s.}.
\end{equation}
\end{Corollary}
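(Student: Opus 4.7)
The plan is to apply Theorem \ref{thm: ergo non_stat} twice, once for each assertion, with $Y$ taken to be the corresponding functional of the stationary ergodic solution $Z$ of Proposition \ref{prop: existence of ergodic solut}. I will carry out the argument for \eqref{eq: nonstat MC ergo 1}; the projection version \eqref{eq: nonstat proj ergo 1} is entirely analogous, replacing the norm functional by $v \mapsto |\langle v, w\rangle_{\mathcal{V}}|$.

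Set $x_t := |X(t)|_{\mathcal{V}}$, $y_t := |Z(t)|_{\mathcal{V}}$, and $g(u) := u^p$, which lies in $\mathcal{C}^{\infty}(\mathbb{R})$ with $g^{(k)}(u) = \frac{p!}{(p-k)!} u^{p-k}$; in particular $g^{(p)} \equiv p!$ is globally Lipschitz. The processes $x_t$ and $y_t$ are continuous (Propositions \ref{prop: existence of solut} and \ref{prop: existence of ergodic solut}), and strict stationarity together with ergodicity of $(y_t)$ is inherited from $(Z(t))$ via the continuous functional $v \mapsto |v|_{\mathcal{V}}$. Since $\mu = N(0,Q_{\infty}^{\alpha})$ is a centered Gaussian on $\mathcal{V}$, Fernique's theorem gives $\mathbb{E}|Z(0)|_{\mathcal{V}}^{q} < \infty$ for every $q \geq 0$, so $\mathbb{E}|g^{(k)}(y_0)| < \infty$ for $k=0,1,\dots,p$.

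The decisive step is the a.s. convergence $|x_t - y_t| \to 0$. Subtracting the mild representations \eqref{eq: general_solut} and \eqref{eq: stat_solut} kills the stochastic convolution:
\begin{equation*}
X(t) - Z(t) = S_\alpha(t)\bigl(X_0 - \tilde{X}\bigr).
\end{equation*}
By \eqref{eq: A2} and $S_\alpha(t) = S(\alpha t)$, we get $|S_\alpha(t)|_{\mathcal{L}} \leq M e^{-\rho \alpha t}$, and since both $X_0$ and $\tilde{X}$ are $\mathcal{V}$-valued with finite second moment, $|X_0 - \tilde{X}|_{\mathcal{V}}$ is a.s.\ finite. Hence
\begin{equation*}
\bigl||X(t)|_{\mathcal{V}} - |Z(t)|_{\mathcal{V}}\bigr| \leq |X(t) - Z(t)|_{\mathcal{V}} \leq M e^{-\rho \alpha t} |X_0 - \tilde{X}|_{\mathcal{V}} \xrightarrow[t\to\infty]{} 0 \quad \text{a.s.}
\end{equation*}
This verifies \eqref{eq: xt to yt}, and Theorem \ref{thm: ergo non_stat} now yields
\begin{equation*}
\frac{1}{T}\int_0^T |X(t)|_{\mathcal{V}}^{p}\, dt \xrightarrow[T\to\infty]{} \mathbb{E}|Z(0)|_{\mathcal{V}}^{p} = \int_{\mathcal{V}} |y|_{\mathcal{V}}^{p}\, \mu(dy) \quad \text{a.s.},
\end{equation*}
which is \eqref{eq: nonstat MC ergo 1}. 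For \eqref{eq: nonstat proj ergo 1} the same argument applies, noting that $|\langle X(t)-Z(t), w\rangle_{\mathcal{V}}| \leq |w|_{\mathcal{V}}\, |S_\alpha(t)(X_0 - \tilde{X})|_{\mathcal{V}}$ still decays exponentially, and $\mathbb{E}|\langle Z(0),w\rangle_{\mathcal{V}}|^{q} < \infty$ for all $q \geq 0$ by the same Gaussian reasoning.

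I do not foresee a genuine obstacle: the only non-routine point is recognizing that the difference $X(t)-Z(t)$ equals $S_\alpha(t)(X_0 - \tilde{X})$, so that the exponential stability \eqref{eq: A2} directly supplies the hypothesis needed for Theorem \ref{thm: ergo non_stat}, while the Gaussianity of $\mu$ takes care of all moment requirements on $g^{(k)}(y_0)$ in one stroke.
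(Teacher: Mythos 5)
Your proposal is correct and follows essentially the same route as the paper: apply Theorem \ref{thm: ergo non_stat} to $x_t=|X(t)|_{\mathcal{V}}$, $y_t=|Z(t)|_{\mathcal{V}}$ with $g(u)=u^p$, using $X(t)-Z(t)=S_\alpha(t)(X_0-\tilde X)$ together with \eqref{eq: A2} for the a.s.\ convergence $|x_t-y_t|\to 0$ and Fernique's theorem for the moment conditions (the paper takes the Lipschitz derivative at order $p-1$, you at order $p$ — an immaterial variation), with the projection case handled analogously.
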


\begin{proof}
Starting with \eqref{eq: nonstat MC ergo 1}, consider $(Z(t),t \geq 0)$ the strictly stationary solution to \eqref{eq: SPDE} and set $y_t = |Z(t)|_\mathcal{V}$ and $x_t = |X(t)|_\mathcal{V}$. Obviously, $(y_t, t \geq 0)$ and $(x_t, t \geq 0)$ have continuous trajectories by continuity of processes $Z$ and $X$ (cf. Proposition \ref{prop: existence of solut} and Proposition \ref{prop: existence of ergodic solut}) and $(y_t, t \geq 0)$ is strictly stationary and ergodic (by stationarity and ergodicity of $Z$). Clearly $|x_t - y_t| \leq |S(t)|_{\mathcal{L}} |X(0)-Z(0)|_\mathcal{V} \to 0$ a.s..\\

For $g(x) = x^p$, in view of the Fernique theorem, we obtain
\begin{equation*}
\begin{aligned}
& \mathbb{E} |g^{(k)}(y_0)| = \mathbb{E}(c \, |Z(0)|^{p-k}_\mathcal{V}) < \infty; \; k = 0,1,..,p-1,
\end{aligned}
\end{equation*}
and $g^{(p-1)}(x) = p! x$ is globally Lipschitz. Therefore, the proof of (\ref{eq: nonstat MC ergo 1}) is completed in virtue of Theorem \ref{thm: ergo non_stat}.
\\

Proof of convergence \eqref{eq: nonstat proj ergo 1} runs similarly using processes $y_t = |\langle Z(t),w \rangle_\mathcal{V}|$ and $x_t = |\langle X(t),w \rangle_\mathcal{V}|$.
\end{proof}

\begin{Corollary} \label{cor: consistency}
Under the assumptions \eqref{eq: A0}, \eqref{eq: A1} and \eqref{eq: A2}, the estimators \eqref{eq: MC_est_continuous}, \eqref{eq: MC_est_discrete}, \eqref{eq: proj_est_continuous} and \eqref{eq: proj_est_discrete} are strongly consistent, i.e.
\begin{equation*}
\begin{aligned}
&\hat{\alpha}_{T}:= \biggl( \frac{\frac{1}{T}\int_{0}^{T}|X(t)|_{\mathcal{V}}^{2}dt}{{\rm Tr} (Q_{\infty})} \biggr)^{-\frac{1}{2H}} \overset{T \to \infty}{\longrightarrow} \alpha \quad \text{a.s.} ,\\
&\check{\alpha}_{n} := \biggl( \frac{\frac{1}{n}\sum_{i=1}^{n}|X(i)|_{\mathcal{V}}^{2}}{{\rm Tr} (Q_{\infty})} \biggr)^{-\frac{1}{2H}}\overset{n \to \infty}{\longrightarrow} \alpha \quad \text{a.s.} ,\\
&\tilde{\alpha}_{T} := \biggl( \frac{\frac{1}{T}\int_{0}^{T}|\langle X(t),w \rangle_\mathcal{V}|^{2}dt}{\langle Q_{\infty} w,w\rangle_\mathcal{V}} \biggr)^{-\frac{1}{2H}}\overset{T \to \infty}{\longrightarrow} \alpha \quad \text{a.s.} ,\\
&\bar{\alpha}_{n} := \biggl( \frac{\frac{1}{n}\sum_{i=1}^{n}|\langle X(i),w \rangle_\mathcal{V}|^{2}}{\langle Q_{\infty} v,v\rangle_\mathcal{V}} \biggr)^{-\frac{1}{2H}}\overset{n \to \infty}{\longrightarrow} \alpha \quad \text{a.s.}.
\end{aligned}
\end{equation*}
\end{Corollary}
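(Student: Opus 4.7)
The plan is to read off Corollary \ref{cor: consistency} as a direct consequence of Corollary \ref{cor: convergence of non-stat moments} (in the continuous-time case) together with the identity \eqref{eq: Q^aplha} for the asymptotic covariance and the continuous mapping theorem applied to the map $x \mapsto (x/c)^{-1/(2H)}$, which is continuous on $(0,\infty)$ for any constant $c>0$.

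First I would handle $\hat{\alpha}_T$. Applying Corollary \ref{cor: convergence of non-stat moments} with $p=2$ gives
\[
\frac{1}{T}\int_0^T |X(t)|_{\mathcal{V}}^2\,dt \;\xrightarrow[T\to\infty]{\text{a.s.}}\; \int_{\mathcal{V}} |y|_{\mathcal{V}}^2\,\mu(dy) \;=\; \mathrm{Tr}(Q_\infty^\alpha) \;=\; \alpha^{-2H}\,\mathrm{Tr}(Q_\infty),
\]
where the final equality is \eqref{eq: Q^aplha}. Since $\Phi\neq 0$ by \eqref{eq: Phi non zero}, $\mathrm{Tr}(Q_\infty) > 0$, so the quotient inside the bracket in \eqref{eq: MC_est_continuous} converges a.s.\ to $\alpha^{-2H} > 0$. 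The function $x\mapsto x^{-1/(2H)}$ is continuous at $\alpha^{-2H}$, so $\hat\alpha_T \to \alpha$ a.s. The projection estimator $\tilde\alpha_T$ is treated identically: Corollary \ref{cor: convergence of non-stat moments} applied to $\langle X(t),w\rangle_\mathcal{V}$ gives the numerator converging a.s.\ to $\int_\mathcal{V}|\langle y,w\rangle_\mathcal{V}|^2\mu(dy) = \langle Q_\infty^\alpha w,w\rangle_\mathcal{V} = \alpha^{-2H}\langle Q_\infty w,w\rangle_\mathcal{V}$, and the assumption $\langle Q_\infty w,w\rangle_\mathcal{V}\neq 0$ combined with the continuous mapping theorem finishes the argument.

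For the discrete-time estimators $\check\alpha_n$ and $\bar\alpha_n$, the strategy is the same but requires the discrete-time analogue of Corollary \ref{cor: convergence of non-stat moments}, i.e.\
\[
\frac{1}{n}\sum_{i=1}^n |X(i)|_{\mathcal{V}}^2 \;\xrightarrow[n\to\infty]{\text{a.s.}}\; \alpha^{-2H}\,\mathrm{Tr}(Q_\infty),
\]
and likewise with $\langle X(i),w\rangle_\mathcal{V}$. This follows by repeating the proof of Theorem \ref{thm: ergo non_stat} with $\frac{1}{T}\int_0^T(\,\cdot\,)dt$ replaced by $\frac{1}{n}\sum_{i=1}^n(\,\cdot\,)$: Lemma \ref{lem: means_to_zero} has a trivial sequence-version (Cesàro-type limit of the product of a sequence with bounded averages and a null sequence), the Taylor expansion of $g(x)=x^2$ is identical, discrete-time ergodicity of $Z$ is supplied by the second line of \eqref{eq: Ergo_stat} in Proposition \ref{prop: existence of ergodic solut}, and the uniform contraction $|X(i)-Z(i)|_\mathcal{V}\leq |S(\alpha i)|_{\mathcal{L}}|X_0-\tilde X|_\mathcal{V}\to 0$ a.s.\ still holds by \eqref{eq: A2}. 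Then the continuous mapping argument above concludes.

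There is no real obstacle: the statement is a routine consequence of the already-established ergodic convergence of sample second moments. The only mildly technical point is that the author states Corollary \ref{cor: convergence of non-stat moments} only in continuous time, so one must explicitly invoke the discrete-time version of Theorem \ref{thm: ergo non_stat}; but since every ingredient in the proof of that theorem (Taylor remainder, Lemma \ref{lem: means_to_zero}, ergodicity of $Z$, Fernique moments) has an immediate discrete counterpart, this extension is automatic.
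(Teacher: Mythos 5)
Your proposal is correct and follows essentially the same route as the paper: the authors also deduce the continuous-time statements directly from Corollary \ref{cor: convergence of non-stat moments} with $p=2$ (together with the identification \eqref{eq: Q^aplha} and continuity of $x\mapsto x^{-1/(2H)}$ on $(0,\infty)$), and handle the discrete-time estimators by the analogous argument using the discrete-time ergodicity from Proposition \ref{prop: existence of ergodic solut}. Your write-up merely makes explicit the details that the paper leaves to the reader.
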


\begin{proof}
The continuous-time versions of the statements follow immediately from Corollary \ref{cor: convergence of non-stat moments} by setting $p=2$. The long-span asymptotics with fixed time step in the discrete-time versions may be shown analogously using discrete version of ergodicity.
\end{proof}


\section{Normal approximation for Gaussian-subordinated sequences/processes}\label{sec: Normal approximation for Gaussian-subordinated sequences/processes}

\subsection{Preliminaries}

The fundamental tool for proving asymptotic normality (as well as for assessing the speed of convergence) will be the celebrated $4^{th}$ moment theorem (see \cite{Nourdin Peccati 2015}):

\begin{Proposition}\label{prop: optimal 4th moment}
Consider an isonormal Gaussian process $\mathbb{X}$ on a separable Hilbert space $\mathcal{H}$. Let $(F_n: n \in \mathbb{N})$ be a sequence of random variables belonging to the $q$-th chaos of $\mathbb{X}$ with $\mathbb{E}F^2_n = 1$ and consider a normally distributed random variable $N \sim \mathcal{N}(0,1)$. Then sequence  $F_n$ converges in distribution to $N$ if and only if the $4^{th}$ cumulants of $F_n$ ($\kappa_4(F_n)$) converge to zero, i.e.
\begin{equation}\label{eq:AN with 4th moment}
F_n \overset{d}{\to} N \quad \Longleftrightarrow \quad \mathbb{E}F^4_n - 3 \to 0.
\end{equation}

In this case $\mathbb{E}F^3_n \to 0$ and there exist positive finite constants $0< c < C < \infty$ (which may depend on the sequence $(F_n)$, but not on $n$) such that
\begin{equation}\label{eq:optimum B-E bounds}
c \; M(F_n) \leq d_{TV}(F_n,N) \leq C \; M(F_n),
\end{equation}
where $d_{TV}$ denotes the total-variation distance and
\[
M(F_n) = \max \{|\mathbb{E}F^3_n|,|\mathbb{E}F^4_n - 3| \} = \max \{|\kappa_3(F_n)|,|\kappa_4(F_n)|\}
\]
represents the optimum bound of Berry-Esseen type for $F_n$ expressed in terms of the $3^{rd}$ and the $4^{th}$ cumulants.

\end{Proposition}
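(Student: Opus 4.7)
The plan is to obtain the equivalence and the two-sided bound as successive consequences of the Malliavin-Stein machinery, essentially following \cite{Nourdin Peccati 2015}. Since $F_n$ lies in the $q$-th Wiener chaos of $\mathbb{X}$, write $F_n = I_q(f_n)$ with $f_n \in \mathcal{H}^{\odot q}$ symmetric and $q!\|f_n\|_{\mathcal{H}^{\otimes q}}^2 = 1$. The product formula for multiple stochastic integrals then yields
\[
\mathbb{E}F_n^4 - 3 = \sum_{r=1}^{q-1} c_{q,r}\, \|f_n \otimes_r f_n\|_{\mathcal{H}^{\otimes 2(q-r)}}^2
\]
for explicit positive constants $c_{q,r}$, so the hypothesis $\mathbb{E}F_n^4 \to 3$ is equivalent to the vanishing of every contraction norm $\|f_n \otimes_r f_n\|$ with $r \in \{1,\dots,q-1\}$. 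This is the original Nualart-Peccati criterion and it is the combinatorial heart of the equivalence \eqref{eq:AN with 4th moment}.

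For the direction $\mathbb{E}F_n^4 \to 3 \Rightarrow F_n \overset{d}{\to} N$ I would apply the Stein-Malliavin bound
\[
d_{TV}(F_n, N) \leq 2\, \mathbb{E}\bigl| 1 - \tfrac{1}{q}\|DF_n\|_\mathcal{H}^{2} \bigr|,
\]
where $D$ is the Malliavin derivative. Since the variance of $\tfrac{1}{q}\|DF_n\|_\mathcal{H}^2$ is again a nonnegative linear combination of the squared contractions $\|f_n\otimes_r f_n\|^2$, the right-hand side is controlled by a constant multiple of $\sqrt{\mathbb{E}F_n^4 - 3}$, which gives the crude CLT. The converse implication is easier: hypercontractivity of the Ornstein-Uhlenbeck semigroup yields uniform boundedness of all $L^p$ norms on the fixed $q$-th chaos, and this promotes convergence in distribution to convergence of the fourth moment.

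To upgrade the upper bound in \eqref{eq:optimum B-E bounds} from $\sqrt{M(F_n)}$ to $M(F_n)$, one has to sharpen the Stein discrepancy by relating $\mathbb{E}|1 - q^{-1}\|DF_n\|_\mathcal{H}^2|$ directly to the third and fourth cumulants via the cumulant identities on Wiener chaos. These identities express both $\kappa_3(F_n)$ and $\kappa_4(F_n)$ as polynomials in the contraction norms $\|f_n\otimes_r f_n\|$, and a careful bookkeeping shows that the same polynomial combinations bound the Stein discrepancy without losing a square root. The assertion $\mathbb{E}F_n^3 \to 0$ drops out of the same identities, since $|\kappa_3(F_n)|$ is itself controlled by the contraction norms.

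The main obstacle, and the genuinely deep part of the statement, is the matching lower bound. One must exhibit, for each $n$, a test function realising $d_{TV}(F_n, N) \geq c\, M(F_n)$ with $c>0$ independent of $n$. The standard route is an Edgeworth-type expansion of the distribution of $F_n$ whose leading correction is exactly of order $\kappa_3(F_n) + \kappa_4(F_n)$, inverted through the Stein equation to produce a concrete test function capturing this leading discrepancy. The delicate ingredient is a nondegeneracy argument specific to a fixed Wiener chaos, showing that the third and fourth cumulants cannot conspire to cancel so as to make $d_{TV}(F_n, N)/M(F_n) \to 0$; this is what produces the positive constant $c$ in \eqref{eq:optimum B-E bounds} and is the reason $c$ is allowed to depend on the sequence.
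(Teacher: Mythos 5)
The paper does not prove this proposition at all: it is quoted verbatim from the optimal fourth moment theorem of Nourdin and Peccati \cite{Nourdin Peccati 2015}, so what you are really attempting is a sketch of that paper's argument. Your first two steps (the product-formula identity expressing $\mathbb{E}F_n^4-3$ through contraction norms, the bound $d_{TV}(F_n,N)\leq 2\,\mathbb{E}\bigl|1-q^{-1}\|DF_n\|^2_{\mathcal{H}}\bigr|$ giving the rate $\sqrt{\kappa_4(F_n)}$, and the converse via hypercontractivity) are standard and correct, and they do prove the equivalence \eqref{eq:AN with 4th moment}.

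The genuine gap is in your upgrade to the optimal upper bound. You claim that ``careful bookkeeping'' relates $\mathbb{E}\bigl|1-q^{-1}\|DF_n\|^2_{\mathcal{H}}\bigr|$ directly to $\max\{|\kappa_3(F_n)|,\kappa_4(F_n)\}$ without losing a square root. This cannot work: $1-q^{-1}\|DF_n\|^2_{\mathcal{H}}$ is a centered element of a finite sum of chaoses, so by equivalence of $L^1$ and $L^2$ norms there its expectation in absolute value is comparable to $\sqrt{\mathrm{Var}(q^{-1}\|DF_n\|^2_{\mathcal{H}})}\asymp\sqrt{\kappa_4(F_n)}$. Concretely, for $F_n=\sum_{i=1}^{d}\lambda_i(N_i^2-1)$ in the second chaos with $\lambda_i=\pm c$ in equal numbers, one has $\kappa_3(F_n)=0$, $\kappa_4(F_n)\asymp d^{-1}$, yet $\mathbb{E}\bigl|1-\tfrac12\|DF_n\|^2_{\mathcal{H}}\bigr|\asymp d^{-1/2}\gg\max\{|\kappa_3|,\kappa_4\}$. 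The essential new idea of \cite{Nourdin Peccati 2015}, which your sketch misses, is to replace this quantity by the strictly smaller one with a conditional expectation inside, $\mathbb{E}\bigl[\bigl|\mathbb{E}\bigl(1-q^{-1}\|DF_n\|^2_{\mathcal{H}}\,\big|\,F_n\bigr)\bigr|\bigr]$, to show that it still dominates (up to constants) $d_{TV}(F_n,N)$, and then to bound \emph{this} conditional discrepancy by $\max\{|\kappa_3(F_n)|,|\kappa_4(F_n)|\}$ via an exact Edgeworth-type analysis; this is precisely the quantity the present paper exploits later in the proof of Lemma \ref{lem: B-E bounds w/o s_n - inftyD}. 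Your treatment of the matching lower bound is likewise only a gesture (test functions from the Stein equation plus a nondegeneracy claim), so as written the proposal establishes the qualitative CLT and the suboptimal rate $\sqrt{\kappa_4}$, but neither inequality in \eqref{eq:optimum B-E bounds}.
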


\begin{Remark}\label{rem: 4th moment for d_W }
It directly follows from the proof of Proposition \ref{prop: optimal 4th moment} (see the proof of Theorem 1.2 in \cite{Nourdin Peccati 2015}) that the corresponding upper bound
\[
d_{TV}(F_T,N) \leq C \max \{|\kappa_3(F_T)|,|\kappa_4(F_T)|\}
\]
holds also for a random process $(F_T: T>0)$ from the $q$-th Wiener chaos. \\
Furthermore, the upper bound in \eqref{eq:optimum B-E bounds} also holds for the Wasserstein distance:
\[
d_{W}(F_n,N) \leq C \; \max \{|\kappa_3(F_n)|,|\kappa_4(F_n)|\},
\]
possibly with different constant $C$. To show this inequality, we can employ conditional expectation in the proof of formula $(3.35)$ in \cite{Nourdin Peccati 2009 PTRF} (cf. Theorem 3.1 therein) to formulate Proposition 2.4 from \cite{Nourdin Peccati 2015} in terms of the Wasserstein distance and apply the proof of the upper bound in Theorem 1.2 therein.
\end{Remark}


In order to use the $4^{th}$ moment theorem, we will need to calculate the $3^{rd}$ and the $4^{th}$ cumulants of the sample second moments of the observations.

\subsection{$\infty$-dimensional Gaussian-subordinated sequences}\label{subsec: infty D sequences}
Following the approach for one-dimensional Gaussian-subordinated sequences presented in the paper \cite{Bierme_et_al 2012}, we derive corresponding results for Gaussian sequences with values in an infinite-dimensional separable Hilbert space $\mathcal{V}$ with an orthonormal basis $\{e_{k}\}_{k=1}^{\infty}$.\\

Let $(Z_i: i \in \mathbb{Z})$ be a non-degenerate stationary $\mathcal{V}$-valued centered Gaussian sequence with a trace class covariance operator $Q \neq 0$, i.e. $Z_i \sim N(0,Q)$ and denote $Q(i-j) =
cov(Z_i,Z_j)$ the auto-covariance operators of the sequence, which means that
\[
\mathbb{E}(\langle Z_i,v\rangle_\mathcal{V} \; \langle Z_j,w\rangle_\mathcal{V} ) = \langle Q(i-j)v,w\rangle_\mathcal{V} \quad \forall v,w \in \mathcal{V}.
\]
Now define
\begin{equation}\label{eq: def V_n F_n - infty}
V_n :=  \frac{1}{\sqrt{n}} \sum_{i=1}^{n}(|Z_i|_\mathcal{V}^2 - {\rm Tr}(Q)), \quad s_n := \mathbb{E}V^2_n, \quad F_n := \frac{V_n}{\sqrt{s_n}}.
\end{equation}

We can express $s_n$ in terms of $Q$ (using the spectral decomposition and Isserlis theorem):
\begin{equation}\label{eq: s_n via Q}
s_n = 2\sum_{i=-(n-1)}^{n-1}\biggl(1 - \frac{|i|}{n} \biggr) |Q(i)|^2_{\mathcal{L}_2}.
\end{equation}

We need to define a Hilbert space with scalar product generated by the auto-covariance operators $Q(i)$. To this end, we have to assume positive definiteness of $Q$. This assumption, however, can be made without loss of generality. If $Q$ is  only positive-semidefinite, we can take an orthonormal basis $\{e_{k}\}_{k=1}^{\infty}$ of $\mathcal{V}$ consisting of eigenvectors of $Q$ and restrict ourselves on the subspace of $\mathcal{V}$ generated by those eigenvectors corresponding to positive eigenvalues (denote it by $\mathcal{V}_{pos}$). Clearly the projections of $Z_i$ onto the orthogonal complement of the subspace $\mathcal{V}_{pos}$ is (almost surely) zero. Hence, $Z_i \in \mathcal{V}_{pos}$ almost surely  and $Q$, restricted to $\mathcal{V}_{pos}$, is positive-definite.\\

The construction of the corresponding Hilbert space $\mathcal{H}$ and isonormal Gaussian process for Wiener chaos decomposition follows the approach from \cite{Nourdin Peccati 2012 book} (cf. Proposition 7.2.3). Consider the linear span of the abstract set $\mathcal{E}:= \{h_{i,v} : i \in \mathbb{Z}, v \in \mathcal{V} \} \subset \mathcal{H}$ endowed with the scalar product obtained by natural extension of the following binary operation on $\mathcal{E}$:
\[\langle h_{i,v}, h_{j,w} \rangle_{\mathcal{H}} := \langle Q(i-j)v,w\rangle_{\mathcal{V}}.\]
Taking the completion of this linear space with respect to the scalar product yields the separable Hilbert space $\mathcal{H}$. Separability of $\mathcal{H}$ follows from separability of $\mathcal{V}$. The appropriate isonormal Gaussian process $\mathbb{X}$ is defined as the $L^2$-isometric linear extension of the mapping
\[
\mathbb{X}(h_{i,v}) =  \langle Z_i,v\rangle_\mathcal{V}, \quad \forall h_{i,v} \in \mathcal{E}.
\]

Next step is to express $F_n$ as an element of the second Wiener chaos of $\mathbb{X}$. Consider $h_{i,e_k}$ the elements of $\mathcal{H}$. Then
\begin{equation}\label{eq: Fn via I_2}
\begin{aligned}
&F_n = \frac{1}{\sqrt{n \; s_n}}\sum_{i=1}^{n} \sum_{k=1}^{\infty} (\langle Z_i,e_k\rangle_\mathcal{V}^2 - \langle Q e_k,e_k\rangle_\mathcal{V})\\
& = \frac{1}{\sqrt{n \; s_n}}\sum_{i=1}^{n} \sum_{k=1}^{\infty} (\mathbb{X}^2(h_{i,e_k}) - \langle Q e_k,e_k\rangle_\mathcal{V}) \\
& = \frac{1}{\sqrt{n \; s_n}}\sum_{i=1}^{n} \underset{N \to \infty}{\text{$L^{2}$-lim}}\sum_{k=1}^{N} I_2(h_{i,e_k}^{\otimes 2})\\
& = I_2\biggl( \frac{1}{\sqrt{n \; s_n}} \sum_{i=1}^{n} \sum_{k=1}^{\infty} h_{i,e_k}^{\otimes 2} \biggr) = I_2(f_n) ,
\end{aligned}
\end{equation}
where $I_2$ is the isometric isomorphism between the space of symmetric tensor product $\mathcal{H}^{\odot 2}$ (equipped with the modified tensor norm $\sqrt{2}|.|_{\mathcal{H}^{\otimes 2}}$) and the second Wiener chaos of $\mathbb{X}$ (equipped with the $L^2$ norm).\\

The $3^{rd}$ and the $4^{th}$ cumulants of $F_n$ can be bounded above (see Lemma \ref{lem: 4  cumul bound infty D} in appendix for details):

\begin{equation}\label{eq: 3  cumul bound infty D}
\kappa_3(F_n) \leq \frac{C_1}{\sqrt{n}\; s_n^{3/2}} \biggl( \sum_{|i|<n} |Q(i)|^{3/2}_{\mathcal{L}_2} \biggr)^2,
\end{equation}
where $C_1$ is arbitrary constant greater than 8, and
\begin{equation}\label{eq: 4  cumul bound infty D}
\kappa_4(F_n) \leq \frac{C_2}{n\; s_n^2} \biggl( \sum_{|i|<n} |Q(i)|^{4/3}_{\mathcal{L}_2} \biggr)^3,
\end{equation}
where $C_2$ is a (sufficiently large) constant.

\begin{Remark}
Real-valued Gaussian-subordinated sequences are also covered by the theory above. The bounds \eqref{eq: 3  cumul bound infty D} and \eqref{eq: 4  cumul bound infty D} in this special case correspond to those in \cite{Bierme_et_al 2012}.
\end{Remark}

\subsection{$\infty$-dimensional Gaussian-subordinated processes}
Consider a non-degenerate centered stationary continuous Gaussian random process $(Z_t: t \in \mathbb{R})$ with values in a separable Hilbert space $\mathcal{V}$ with orthonormal basis $\{e_k\}_{k=1}^{\infty}$. Denote by $Q(t-s)$ the (trace-class) auto-covariance operators of the process (i.e. $Q(t-s) = cov(Z_t,Z_s)$) and $Q=Q(0)\neq 0$ the covariance operator of $Z_t$. Define the corresponding Gaussian subordinate processes:
\begin{equation}\label{eq: def V_T F_T - infty}
 V_T :=  \frac{1}{\sqrt{T}} \int_{0}^{T}(|Z_t|_{\mathcal{V}}^2  - {\rm Tr}(Q)) dt, \quad s_T := \mathbb{E}V^2_T, \quad F_T := \frac{V_T}{\sqrt{s_T}}.
\end{equation}

The spectral decomposition, the Isserlis' theorem and the change of variables within integrals yield:
\begin{equation}\label{eq: s_T via Q}
s_T = 2\int_{-T}^{T}\biggl(1 - \frac{|t|}{T} \biggr) |Q(t)|^2_{\mathcal{L}_2}  dt.
\end{equation}

To construct the appropriate Hilbert space $\mathcal{H}$ with the isonormal Gaussian process $\mathbb{X}$, start with a set $\mathcal{E}:= \{h_{t,v} : t \in \mathbb{R} , v \in \mathcal{V} \}$ and proceed analogously to previous subsection. Separability of $\mathcal{H}$ follows from separability of $\mathcal{V}$ and from $L^2$-continuity of process $(Z_t: t \in \mathbb{R})$.


To enable using results for random sequences, fix $T>0$ and consider the partition $\{t_i = i \frac{T}{n}: i = 0,...,n\}$ of the interval $[0,T]$. Further consider $V_n^{(T)}$ constructed from the stationary sequence $(Z_{t_1},Z_{t_2},...,Z_{t_n})$ with the auto-covariance operators $Q_n^{(T)}(i) = Q(i\; T/n)$.\\

The $L^2$ continuity of the real-valued random process $|Z_.|_{\mathcal{V}}^2 : t \mapsto |Z_t|_{\mathcal{V}}^2$ implies:
\[
V_T= \frac{1}{\sqrt{T}} \int_{0}^{T}(|Z_{t}|_{\mathcal{V}}^2 - {\rm Tr}(Q)) dt = \underset{n \to \infty}{\text{$L^{2}$-lim}} \frac{1}{\sqrt{T}} \sum_{i=1}^{n}(|Z_{t_i}|_{\mathcal{V}}^2 - {\rm Tr}(Q)) \frac{T}{n} =\underset{n \to \infty}{\text{$L^{2}$-lim}} \sqrt{\frac{T}{n}}V_n^{(T)}.
\]
Therefore
\[
s_T = \mathbb{E}V_T^2 = \lim_{n \to \infty} \mathbb{E}\biggl(\sqrt{\frac{T}{n}}V_n^{(T)}\biggr)^2 = \lim_{n \to \infty} \frac{T}{n}s_n^{(T)},
\]
\[
F_T = \underset{n \to \infty}{\text{$L^{2}$-lim}} \frac{\sqrt{\frac{T}{n}}V_n^{(T)}}{\sqrt{\frac{T}{n}s_n^{(T)}}} = \underset{n \to \infty}{\text{$L^{2}$-lim}} \frac{V_n^{(T)}}{\sqrt{s_n^{(T)}}}= \underset{n \to \infty}{\text{$L^{2}$-lim }} F_n^{(T)}.
\]
Since
\[
F_n^{(T)} = I_2\biggl( \frac{1}{\sqrt{n \; s_n^{(T)}}} \sum_{i=1}^{n} \sum_{k=1}^{\infty} h_{t_i,e_k}^{\otimes 2} \biggr)
\]
belongs to the second chaos, $F_T$ is in the second chaos as well, with
\[
F_T = I_2\biggl( \frac{1}{\sqrt{T \; s_T}} \int_{0}^{T} \sum_{k=1}^{\infty} h_{t,e_k}^{\otimes 2} dt \biggr).
\]

By hypercontractivity property on the second Wiener chaos, $\mathbb{E}|F_n^{(T)} - F_T|^p \to 0$ for any $p\geq 2$, which implies
\[ \kappa_3(F_T) = \lim_{n \to \infty} \kappa_3(F_n^{(T)}) \quad \text{and} \quad  \kappa_4(F_T) = \lim_{n \to \infty} \kappa_4(F_n^{(T)}).  \]

Now we can utilize the bounds from the previous subsection (note that $ t \mapsto |Q(t)|_{\mathcal{L}_2}$ is continuous):
\begin{equation}\label{eq: 3rd cumul infty-D continuous}
\begin{aligned}
&\kappa_3(F_T) = \lim_{n \to \infty} \kappa_3(F_n^{(T)})  \leq  \liminf_{n \to \infty} \frac{C_1}{\sqrt{n}\; (s_n^{(T)})^{3/2}} \biggl( \sum_{|i|<n} |Q(i\; T/n)|^{3/2}_{\mathcal{L}_2} \biggr)^2\\
&= \liminf_{n \to \infty} \frac{C_1}{\sqrt{T}\; (s_n^{(T)}\frac{T}{n})^{3/2}} \biggl( \sum_{|i|<n} |Q(i\; T/n)|^{3/2}_{\mathcal{L}_2}\; \frac{T}{n} \biggr)^2 \\
&=\frac{C_1}{\sqrt{T}\; s_T^{3/2}} \biggl( \int_{-T}^{T} |Q(t)|^{3/2}_{\mathcal{L}_2} dt \biggr)^2,
\end{aligned}
\end{equation}
and
\begin{equation}\label{eq: 4th cumul infty-D continuous}
\begin{aligned}
&\kappa_4(F_T) = \lim_{n \to \infty} \kappa_4(F_n^{(T)})  \leq  \liminf_{n \to \infty}  \frac{C_2}{n\; (s_n^{(T)})^2} \biggl( \sum_{|i|<n} |Q(i\; T/n)|^{4/3}_{\mathcal{L}_2} \biggr)^3\\
&\leq \liminf_{n \to \infty} \frac{C_2}{(s_n^{(T)}\frac{T}{n})^{2} \; T}\biggl(\sum_{|i|<n} |Q(i\; T/n)|^{4/3}_{\mathcal{L}_2}  \; \frac{T}{n}\biggr)^3 \\
&= \frac{C_2}{(s_T)^{2} \; T}\biggl(\int_{-T}^{T} |Q(t)|^{4/3}_{\mathcal{L}_2} dt \biggr)^3.
\end{aligned}
\end{equation}

\begin{Remark}
Real-valued Gaussian-subordinated processes are again a special case of the theory above.
\end{Remark}
\section{Berry-Esseen bounds for sample moments}\label{sec: Berry-Esseen bounds for sample moments}

In this section, we derive the upper bounds for the speed of convergence to normal distribution of the (centered and standardized) sample second moments calculated from the solutions to the original SPDE \eqref{eq: SPDE}. Our approach is motivated by the work \cite{SebaiyViens2016}, which covers the real-valued fractional Ornstein-Uhlenbeck processes. We study only the case $H < \frac{3}{4}$. Note that asymptotic normality in case  $H > \frac{3}{4}$ can in general not be expected (see discussion in subsection \ref {ex: parab. eq. distr noise}).\\

Recall that $(Z(t): t \geq 0)$ denotes the stationary solution to \eqref{eq: SPDE}, which is a stationary centered Gaussian process with values in $\mathcal{V}$ and with covariance operator $Q_\infty^\alpha \neq 0$ (by assumption \eqref{eq: Phi non zero}).\\

To obtain an explicit formula for the auto-covariance operators $Q_\infty^\alpha(t) = cov(Z(t),Z(0))$, we first formulate a proposition on calculating covariance of stochastic integrals driven by a fractional Brownian motion. This proposition is a straightforward generalization of Lemma 2.1 in \cite{Cheridito_et_al 2003}.

\begin{Proposition}\label{prop: Ito isometry fBm}
Let $(\beta^H(t):t \in \mathbb{R})$ be a scalar fractional Brownian motion with Hurst parameter $H \in (0,1)$. Consider real numbers $a<b \leq c < d$ and continuous deterministic functions $f \in \mathcal{C}([a,b])$, $g \in \mathcal{C}([c,d])$ such that
\[
\int_{a}^{b}f'(t)\beta^H(t) dt \quad \text{and} \quad \int_{c}^{d}g'(t)\beta^H(t) dt
\]
exist almost surely (as Riemann integrals). Then
\begin{equation}\label{eq: cov int fBm}
    \mathbb{E} \biggl(\int_{a}^{b}f(t) d\beta^H(t) \biggr) \biggl(\int_{c}^{d}g(s) d\beta^H(s) \biggr) = H(2H-1)\int_{a}^{b}\int_{c}^{d}f(t)g(s)(s-t)^{2H-2}ds dt.
\end{equation}
\end{Proposition}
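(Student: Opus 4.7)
The plan is to adapt the proof of Lemma 2.1 in \cite{Cheridito_et_al 2003} to the case of two stochastic integrals over adjacent (possibly touching) intervals. The only ingredients used are pathwise integration by parts and the fBm covariance
$$R_H(t,s) := \mathbb{E}\bigl[\beta^H(t)\beta^H(s)\bigr] = \tfrac{1}{2}\bigl(|t|^{2H}+|s|^{2H}-|s-t|^{2H}\bigr).$$

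First, I would use the assumed existence of the Riemann integrals $\int_a^b f'(t)\beta^H(t)\,dt$ and $\int_c^d g'(s)\beta^H(s)\,ds$ to define the stochastic integrals pathwise via integration by parts:
$$\int_a^b f(t)\,d\beta^H(t) = f(b)\beta^H(b) - f(a)\beta^H(a) - \int_a^b f'(t)\beta^H(t)\,dt,$$
and symmetrically for $\int_c^d g(s)\,d\beta^H(s)$. Multiplying the two representations and taking expectation (Fubini is legitimate, since $R_H$ is continuous and hence bounded on the compact $[a,d]^2$) expresses the left-hand side of \eqref{eq: cov int fBm} as a sum of nine deterministic terms: four corner terms of the form $\pm f(u)g(v)R_H(u,v)$ with $u\in\{a,b\}$, $v\in\{c,d\}$; four single-integral terms of the form $\pm f(u)\int_c^d g'(s)R_H(u,s)\,ds$ and $\pm g(v)\int_a^b f'(t)R_H(t,v)\,dt$; and one double-integral term $\int_a^b\!\int_c^d f'(t)g'(s)R_H(t,s)\,ds\,dt$.

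Next, I would show that the right-hand side of \eqref{eq: cov int fBm} generates the same nine terms via a double integration by parts. On $[a,b]\times[c,d]$ one has $s\ge t$, with $s=t$ only possibly at the corner $(b,c)$ when $b=c$, and away from this corner
$$\frac{\partial^2 R_H}{\partial t\,\partial s}(t,s) = H(2H-1)(s-t)^{2H-2},$$
obtained by direct differentiation of $R_H$ (the $|t|^{2H}$ and $|s|^{2H}$ pieces contribute nothing to the mixed derivative). Substituting this identity and integrating by parts first in $s$, then in $t$, over the rectangle $[a,b]\times[c,d]$ produces exactly the nine-term combination from the previous paragraph; matching term by term yields the identity.

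The main obstacle is the combination $b=c$ with $H<1/2$, in which case $(s-t)^{2H-2}$ is not Lebesgue-integrable near $(b,c)$. I would handle this by performing the double integration by parts on the truncated rectangle $[a,b]\times[c+\varepsilon,d]$ and letting $\varepsilon\downarrow 0$, using the continuity of $R_H$ on $[a,d]^2$ to pass to the limit in the boundary terms; the convergence of the interior terms is exactly the statement that the Riemann integrals involving $f'\beta^H$ and $g'\beta^H$ exist. When $b<c$, or $H\ge 1/2$, the kernel is locally integrable and the computation is routine.
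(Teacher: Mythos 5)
Your argument is correct and is essentially the proof the paper gives (the paper simply defers to Lemma 2.1 of Cheridito--Kawaguchi--Maejima): pathwise integration by parts on the two stochastic integrals, computing the expectation through the covariance $R_H$, and a reverse (double) integration by parts on the rectangle to recover the kernel $H(2H-1)(s-t)^{2H-2}$. One small remark: when $b=c$ and $H<\frac{1}{2}$ the kernel is in fact Lebesgue-integrable over $[a,b]\times[c,d]$ (the corner singularity behaves like $r^{2H-2}$ with $2H-2>-2$ in two dimensions), so your $\varepsilon$-truncation is a convenient but not strictly necessary precaution.
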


Note that the equality \eqref{eq: cov int fBm} is well known in the regular case $H \geq \frac{1}{2}$. This proposition, however, covers also the singular cases $H < \frac{1}{2}$, because it relies on the disjoint integration domains.

\begin{proof}
The proof consists of integration by parts applied on the left-hand side, calculating expectations and applying reverse integration by parts. For details, see Lemma 2.1 in \cite{Cheridito_et_al 2003}.
\end{proof}

\begin{Lemma}\label{lem: auto-covariance operator}
Assume \eqref{eq: A0}, \eqref{eq: A1} and \eqref{eq: A2}. The auto-covariance operators of the stationary solution $(Z(t): t \geq 0)$ to \eqref{eq: SPDE} may be expressed by the formula
\begin{equation}\label{eq: auto-covariance operator}
\begin{aligned}
   &Q_\infty^\alpha(t) = cov(Z(t),Z(0)) \\
   &=Q_\infty^\alpha S_\alpha^{*}(t) + \int_{0}^{t} \int_{-\infty}^{0} S_\alpha(-r) \Phi \Phi^{*}S_\alpha^{*}(t-s)H(2H-1)(s-r)^{2H-2}drds.
\end{aligned}
\end{equation}
\end{Lemma}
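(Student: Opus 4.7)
The plan is to write the stationary solution as an integral against $B^H$ from $-\infty$ to $t$, split this integral at the origin so that one piece factorises through the semigroup as $S_\alpha(t)Z(0)$, and then compute the covariance with $Z(0)$ by invoking Proposition \ref{prop: Ito isometry fBm} on the two resulting disjoint time intervals, after an orthonormal-basis reduction to the scalar case. The only place where the specific structure of the fBm enters is through that proposition, which is why the argument works uniformly in the singular and regular regimes.

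Concretely, from $Z(t)=\int_{-\infty}^{t}S_\alpha(t-u)\Phi\,dB^H(u)$ the semigroup property $S_\alpha(t-u)=S_\alpha(t)S_\alpha(-u)$ for $u\leq 0<t$ lets me pull the bounded operator $S_\alpha(t)$ outside the stochastic integral over $(-\infty,0]$, so that $Z(t)=S_\alpha(t)Z(0)+J_t$ with $J_t:=\int_0^t S_\alpha(t-u)\Phi\,dB^H(u)$. For $v,w\in\mathcal{V}$ I would then compute $\langle Q_\infty^\alpha(t)v,w\rangle_\mathcal{V}=\mathbb{E}[\langle Z(t),v\rangle\langle Z(0),w\rangle]$ term by term. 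The contribution of $S_\alpha(t)Z(0)$ is immediate from the defining property of $Q_\infty^\alpha$ and yields $\langle Q_\infty^\alpha S_\alpha^{*}(t)v,w\rangle$, which is the first summand in the claim. For the cross term $\mathbb{E}[\langle J_t,v\rangle\langle Z(0),w\rangle]$ I fix an orthonormal basis $\{f_k\}_{k\ge 1}$ of $U$ and write $B^H=\sum_k\beta_k^H f_k$ with independent scalar fBms $\beta_k^H$, so that each of $\langle J_t,v\rangle$ and $\langle Z(0),w\rangle$ becomes a sum of scalar stochastic integrals over the disjoint intervals $[0,t]$ and $(-\infty,0]$. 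Only the diagonal terms $k=j$ survive the expectation, Proposition \ref{prop: Ito isometry fBm} evaluates each of them, and summing over $k$ via Parseval in $U$ collapses the scalar bilinear form to $\langle S_\alpha(-r)\Phi\Phi^{*}S_\alpha^{*}(t-s)v,w\rangle_\mathcal{V}$, producing precisely the double integral in the statement.

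The main obstacle I expect is not the structural identity above but its rigorous justification when $\Phi$ is merely densely defined: by \eqref{eq: A0} only $(\beta I-A)^{\epsilon-1}\Phi\in\mathcal{L}(U,\mathcal{V})$ is bounded, and the factor $(\beta I-A)^{1-\epsilon}$ must be absorbed into the semigroup before the basis decomposition can be used. I would do this by factorising $S_\alpha(\tau)\Phi=\bigl[S_\alpha(\tau)(\beta I-A)^{1-\epsilon}\bigr]\cdot(\beta I-A)^{\epsilon-1}\Phi$ for $\tau>0$ and invoking the standard analytic-semigroup estimate $|S_\alpha(\tau)(\beta I-A)^{1-\epsilon}|_\mathcal{L}\leq C\tau^{\epsilon-1}$, combined with \eqref{eq: A1}--\eqref{eq: A2}, to justify the interchange of the series over $k$ with the stochastic and deterministic integrals (Fubini) and to secure integrability of the double integral both near the singularity $s=r=0$ of $(s-r)^{2H-2}$ (via $\gamma<H$) and in the tail $r\to-\infty$ (via the exponential decay from \eqref{eq: A2}).
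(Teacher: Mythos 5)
Your proposal follows essentially the same route as the paper's proof: the decomposition $Z(t)=S_\alpha(t)Z(0)+\int_0^t S_\alpha(t-s)\Phi\,dB^H(s)$ with $Z(0)=\int_{-\infty}^0 S_\alpha(-r)\Phi\,dB^H(r)$, the coordinatewise reduction to scalar fBm integrals on the disjoint intervals $[0,t]$ and $(-\infty,0]$, Proposition \ref{prop: Ito isometry fBm} for each coordinate, Parseval plus dominated convergence to resum, and the analytic-semigroup factorization $S_\alpha(\tau)\Phi=\bigl[S_\alpha(\tau)(\beta I-A)^{1-\epsilon}\bigr](\beta I-A)^{\epsilon-1}\Phi$ together with the appendix-type integral estimate for well-definedness of the double integral. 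The one point where your write-up needs the paper's extra care: for arbitrary $v,w\in\mathcal{V}$ and $\epsilon<1$ the scalar function $s\mapsto\langle S_\alpha(t-s)\Phi f_k,v\rangle$ blows up like $(t-s)^{\epsilon-1}$ at $s=t$, so the continuity/differentiability hypotheses of Proposition \ref{prop: Ito isometry fBm} fail; the paper therefore first takes test vectors $x,y\in Dom((\beta I-A^*)^{2-\epsilon})$, for which these functions are continuously differentiable up to the endpoints, and recovers general $v,w$ at the very end by density of this domain in $\mathcal{V}$ — a small but necessary patch to your step.
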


\begin{proof}
Notice that the conditions \eqref{eq: A0} and \eqref{eq: A2} imply
\begin{equation}\label{analytic stability}
|S(r)\Phi|_{\mathcal{L}} \le C\frac{e^{-\rho r}}{r^{1-\epsilon}},\quad r>0,
\end{equation}
for a constant $C>0$, because we have
\[
|S(r)\Phi|_{\mathcal{L}} \le |S(r) (\beta I - A)^{1-\epsilon}|_{\mathcal{L}} \cdot |(\beta I - A)^{\epsilon-1}\Phi|_{\mathcal{L}(U, \mathcal{V})}
\]
and \eqref{analytic stability} follows by \cite{Pazy83}, Theorem 6.13. Consequently the second term on the right-hand side of (\ref{eq: auto-covariance operator}) is well defined and the integral converges in the operator norm. Indeed, for each $t\ge 0$ there is a constant $c_t$ such that
\begin{equation}\label{second term}
\begin{aligned}
  &\int_{0}^{t} \int_{-\infty}^{0} H(2H-1)(s-r)^{2H-2}|S_\alpha(-r) \Phi \Phi^{*}S_\alpha^{*}    (t-s)|_{\mathcal{L}}drds \\
  &\le c_t \int_{0}^{t} \int_{-\infty}^{0} e^{\rho r}e^{-\rho (t-s)} (-r)^{\epsilon -1} (t-s)^{\epsilon -1}(s-r)^{2H-2}drds <\infty
  \end{aligned}
\end{equation}
in view of Lemma \ref{lem: integral for Q} in the Appendix.
By construction of the stationary solution, for arbitrary $x,y \in Dom((\beta I -A^*)^{2-\epsilon})$ we have
\begin{equation*}
\begin{aligned}
   &\langle Q_\infty^\alpha(t) \; x,y \rangle_{\mathcal{V}} = \mathbb{E} \biggl( \langle Z(t),x \rangle_{\mathcal{V}}\;\langle Z(0),y \rangle_{\mathcal{V}} \biggr)  \\
   & = \mathbb{E}\biggl( \biggl\langle S_\alpha(t) Z(0) + \int_{0}^{t}S_\alpha(t-s)\Phi dB^{H}(s),x \biggr\rangle_{\mathcal{V}}\;\biggl\langle Z(0),y \biggr\rangle_{\mathcal{V}}\biggr)\\
   & = \mathbb{E}\biggl\langle S_\alpha(t) Z(0),x \biggr\rangle_{\mathcal{V}}\;\biggl\langle Z(0),y \biggr\rangle_{\mathcal{V}} \\
   &+ \mathbb{E}\biggl\langle  \int_{0}^{t}S_\alpha(t-s)\Phi dB^{H}(s),x \biggr\rangle_{\mathcal{V}}\;\biggl\langle \int_{-\infty}^{0}S_\alpha(-r)\Phi dB^{H}(r),y \biggr\rangle_{\mathcal{V}}\\
   &= (A) + (B).
\end{aligned}
\end{equation*}
For the first term, we have
\[
(A)=\mathbb{E}\biggl\langle Z(0),S_\alpha^{*}(t)x \biggr\rangle_{\mathcal{V}}\;\biggl\langle Z(0),y \biggr\rangle_{\mathcal{V}} = \langle Q_\infty^\alpha S_\alpha^{*}(t) \; x,y \rangle_{\mathcal{V}}.
\]
For the second term, we consider an orthonormal basis of $\mathcal{V}$ $\{e_k\}_{k=1}^{\infty}$  and denote by $\beta_{k}^{H}$, $k=1,2,...$ a system of independent $\mathbb{R}$-valued fractional Brownian motions, $\beta_{k}^{H}(t) = \langle B^{H}(t),e_k \rangle_{\mathcal{V}}$. Then
\begin{equation*}
\begin{aligned}
   &(B) = \mathbb{E}\biggl\langle  \sum_{k=1}^{\infty} \int_{0}^{t}S_\alpha(t-s)\Phi e_k d\beta_k^{H}(s),x \biggr\rangle_{\mathcal{V}}\;\biggl\langle \sum_{k=1}^{\infty} \int_{-\infty}^{0}S_\alpha(-r)\Phi e_k d\beta_k^{H}(r),y \biggr\rangle_{\mathcal{V}}\\
   &=\sum_{k=1}^{\infty}  \mathbb{E} \int_{0}^{t} \langle S_\alpha(t-s)\Phi e_k ,x \rangle_{\mathcal{V}} d\beta_k^{H}(s)\; \int_{-\infty}^{0} \langle S_\alpha(-r)\Phi e_k ,y \rangle_{\mathcal{V}}d\beta_k^{H}(r).
\end{aligned}
\end{equation*}

In view of the assumption \eqref{eq: A0} the functions $s\to \Phi ^*S^*_{\alpha}(t-s)x$ and $r\to \Phi ^*S^*_{\alpha}(-r)y$
are continuously differentiable on their respective domains, hence  we may apply Proposition \ref{prop: Ito isometry fBm} to obtain

\begin{equation*}
(B) = \sum_{k=1}^{\infty}  \int_{0}^{t} \int_{-\infty}^{0} \langle S_\alpha(t-s)\Phi e_k ,x \rangle_{\mathcal{V}} \; \langle S_\alpha(-r)\Phi e_k ,y \rangle_{\mathcal{V}} H(2H-1)(s-r)^{2H-2}drds.
\end{equation*}

Taking into account (\ref{second term}) and using the Dominated Convergence Theorem we have that

\begin{equation*}
\begin{aligned}
    &(B) = \int_{0}^{t} \int_{-\infty}^{0} \sum_{k=1}^{\infty}  \langle  e_k ,\Phi^{*} S_\alpha^{*}(t-s)x \rangle_{\mathcal{V}} \; \langle  e_k,\Phi^{*} S_\alpha^{*}(-r) y \rangle_{\mathcal{V}} H(2H-1)(s-r)^{2H-2}drds  \\
   &=\int_{0}^{t} \int_{-\infty}^{0} \langle  \Phi^{*} S_\alpha^{*}(t-s)x ,\Phi^{*} S_\alpha^{*}(-r) y \rangle_{\mathcal{V}} H(2H-1)(s-r)^{2H-2}drds  \\
   &=\int_{0}^{t} \int_{-\infty}^{0} \langle  S_\alpha(-r) \Phi \Phi^{*} S_\alpha^{*}(t-s)x , y \rangle_{\mathcal{V}} H(2H-1)(s-r)^{2H-2}drds  \\
   &= \biggl\langle \left( \int_{0}^{t} \int_{-\infty}^{0} S_\alpha(-r) \Phi \Phi^{*} S_\alpha^{*}(t-s) H(2H-1)(s-r)^{2H-2}drds \right) \; x , y \biggr\rangle_{\mathcal{V}}.
\end{aligned}
\end{equation*}
Since $x,y\in Dom((\beta I -A^*)^{2-\epsilon})$ are arbitrary and $Dom((\beta I -A^*)^{2-\epsilon})$ is dense in $\mathcal{V}$, this concludes the proof of (\ref{eq: auto-covariance operator}).
\end{proof}

\begin{Lemma}\label{lem: bound for Q}
The family of auto-covariance operators $(Q_\infty^\alpha(t): t \in \mathbb{R})$ is right-continuous with respect to the Hilbert-Schmidt norm and
\begin{equation}\label{eq: bound for Q}
    |Q_\infty^\alpha(t)|_{\mathcal{L}_2} \leq \min \{{\rm Tr} (Q_\infty^\alpha), C |t|^{2H-2}\}, \quad \forall t \in \mathbb{R},
\end{equation}
where $C$ is a constant which does not depend on $t$.
\end{Lemma}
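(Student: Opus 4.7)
The plan is to establish the two bounds comprising the minimum separately, starting from the explicit representation of $Q_\infty^\alpha(t)$ from Lemma \ref{lem: auto-covariance operator}. By stationarity of $Z$ we have $Q_\infty^\alpha(-t) = (Q_\infty^\alpha(t))^{*}$, and the Hilbert-Schmidt norm is invariant under adjoints, so it is enough to treat $t \geq 0$.

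For the bound $|Q_\infty^\alpha(t)|_{\mathcal{L}_2} \leq {\rm Tr}(Q_\infty^\alpha)$, fix an orthonormal basis $\{e_k\}_{k=1}^{\infty}$ of $\mathcal{V}$ and apply Cauchy-Schwarz entrywise:
\[
|Q_\infty^\alpha(t)|_{\mathcal{L}_2}^{2} = \sum_{j,k} \bigl( \mathbb{E} \langle Z(t),e_k \rangle_{\mathcal{V}} \langle Z(0),e_j \rangle_{\mathcal{V}} \bigr)^{2} \leq \Bigl(\sum_k \mathbb{E}\langle Z(t),e_k\rangle_{\mathcal{V}}^{2}\Bigr)\Bigl(\sum_j \mathbb{E}\langle Z(0),e_j\rangle_{\mathcal{V}}^{2}\Bigr) = ({\rm Tr}(Q_\infty^\alpha))^{2}.
\]
For the polynomial bound $|Q_\infty^\alpha(t)|_{\mathcal{L}_2} \leq C |t|^{2H-2}$, I would substitute \eqref{eq: auto-covariance operator} and treat the two summands. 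The first satisfies $|Q_\infty^\alpha S_\alpha^{*}(t)|_{\mathcal{L}_2} \leq |Q_\infty^\alpha|_{\mathcal{L}_2} \, M e^{-\rho\alpha t}$ by \eqref{eq: A2}, which is $\leq C\,t^{2H-2}$ for $t\geq 1$. For the second summand I would use the submultiplicativity $|AB^{*}|_{\mathcal{L}_2} \leq |A|_{\mathcal{L}_2} |B|_{\mathcal{L}}$ with $A = S_\alpha(-r)\Phi$ and $B = S_\alpha(t-s)\Phi$; combining \eqref{eq: A1} extended globally via \eqref{eq: A2} (so that $|S(\tau)\Phi|_{\mathcal{L}_2} \leq c\,\tau^{-\gamma}e^{-\rho_0 \tau}$ for some $\rho_0>0$ and all $\tau>0$) with the estimate \eqref{analytic stability}, the problem reduces to bounding
\[
\int_0^t \int_{-\infty}^0 (-r)^{-\gamma}(t-s)^{\epsilon-1} e^{\rho_0 \alpha r} e^{-\rho\alpha(t-s)} (s-r)^{2H-2} \, dr \, ds,
\]
whose dependence on $t$ is controlled by Lemma \ref{lem: integral for Q}. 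For $|t|\leq 1$ the trace bound already dominates inside the minimum; for $|t|>1$ the polynomial bound does, and together they deliver the claim.

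Right-continuity of $t \mapsto Q_\infty^\alpha(t)$ in the $\mathcal{L}_2$-norm is verified on the two terms of \eqref{eq: auto-covariance operator} separately. For $Q_\infty^\alpha S_\alpha^{*}(t)$ I would use the spectral decomposition $Q_\infty^\alpha x = \sum_n \lambda_n \langle x, \phi_n\rangle_{\mathcal{V}} \phi_n$ (trace class, hence $\sum_n \lambda_n^{2}<\infty$) to write
\[
|Q_\infty^\alpha(S_\alpha^{*}(t) - S_\alpha^{*}(t_0))|_{\mathcal{L}_2}^{2} = \sum_n \lambda_n^{2} \, |S_\alpha(t)\phi_n - S_\alpha(t_0)\phi_n|_{\mathcal{V}}^{2};
\]
each summand tends to zero by strong continuity of $S_\alpha$ and is uniformly bounded in $t$ on compact intervals, so dominated convergence on this series concludes. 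For the double-integral term, the integrand is continuous in $t$ and dominated by the integrable majorant from \eqref{second term}, so continuity follows again by dominated convergence. I expect the main obstacle to be the precise extraction of the $t^{2H-2}$ rate from the double integral: the singular kernel $(s-r)^{2H-2}$ interacts delicately with the semigroup's analytic regularization, and a careful splitting of the integration region (for instance at $s = t/2$ and $r = -t/2$) together with a rescaling argument is likely needed to isolate the contributions that yield the correct polynomial decay.
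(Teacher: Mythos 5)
Your proposal follows essentially the same route as the paper: reduction to $t\geq 0$ via adjoints, Cauchy--Schwarz for the trace bound, and the representation of Lemma \ref{lem: auto-covariance operator} combined with \eqref{eq: A1}--\eqref{eq: A2} and Lemma \ref{lem: integral for Q} for the decay $C\,t^{2H-2}$; you additionally sketch a dominated-convergence argument for the right-continuity, which the paper's proof does not spell out. The one substantive deviation is in the second summand: you estimate $|S_\alpha(t-s)\Phi|_{\mathcal{L}}$ by the analytic bound \eqref{analytic stability}, which produces the singular factor $(t-s)^{\epsilon-1}$, and then invoke Lemma \ref{lem: integral for Q}. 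As stated, that lemma requires the \emph{same} exponent $\gamma\in[0,H)$ in both singular factors, and $1-\epsilon$ need not be smaller than $H$, so it does not apply verbatim to your integral (its proof does adapt, because the $(t-s)$-factor only needs an exponent strictly less than $1$, but you would have to say so explicitly). The paper sidesteps this by estimating the second factor through the Hilbert--Schmidt norm as well, $|S_\alpha(t-s)\Phi|_{\mathcal{L}} \leq |S_\alpha(t-s)\Phi|_{\mathcal{L}_2} \leq c\, e^{-\rho\alpha(t-s)}(t-s)^{-\gamma}$ (the globalized form of \eqref{eq: A1} obtained from \eqref{eq: A2}), so that both exponents equal $\gamma<H$ and Lemma \ref{lem: integral for Q} applies directly; with that substitution, and after enlarging the constant so that the range of intermediate $t$ (Lemma \ref{lem: integral for Q} is stated only for large $t$) is absorbed by the trace bound, your argument is complete. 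Your closing concern that a fresh splitting and rescaling of the double integral is still needed is unnecessary: that is precisely the content of the proof of Lemma \ref{lem: integral for Q}, so no new work remains there.
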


\begin{proof}
Since $|Q_\infty^\alpha(-t)|_{\mathcal{L}_2}= |\left(Q_\infty^\alpha(t)\right)^{*}|_{\mathcal{L}_2} = |Q_\infty^\alpha(t)|_{\mathcal{L}_2} $, we can consider only the case $t\geq 0$.\\
By Cauchy-Schwarz inequality, we obtain
\[
 |Q_\infty^\alpha(t)|_{\mathcal{L}_2} \leq {\rm Tr} (Q_\infty^\alpha).
\]
For asymptotic behaviour, we can utilize the previous Lemma, which yields
\begin{equation*}
\begin{aligned}
&|Q_\infty^\alpha(t)|_{\mathcal{L}_2} \leq |Q_\infty^\alpha S_\alpha^{*}(t)|_{\mathcal{L}_2} + \int_{0}^{t} \int_{-\infty}^{0} |S_\alpha(-r) \Phi \Phi^{*}S_\alpha^{*}(t-s)|_{\mathcal{L}_2} \; H(2H-1)(s-r)^{2H-2}drds
\end{aligned}
\end{equation*}
Using the exponential stability \eqref{eq: A2}, the first term can easily be bounded by an exponential function:
\[
|Q_\infty^\alpha S_\alpha^{*}(t)|_{\mathcal{L}_2} = |S_\alpha(t) Q_\infty^\alpha|_{\mathcal{L}_2} \leq |S_\alpha(t)|_{\mathcal{L}} |Q_\infty^\alpha|_{\mathcal{L}_2} \leq M e^{-\rho \alpha t} |Q_\infty^\alpha|_{\mathcal{L}_2}.
\]

For the second term, we employ both assumptions \eqref{eq: A1} and \eqref{eq: A2} and the relation $S_\alpha(t) = S(\alpha t)$, which implies
\[
|S_\alpha(t) \Phi|_{\mathcal{L}_2} \leq c e^{-\rho \alpha t} t^{-\gamma}, \quad \forall t>0,
\]
for some constants $c>0$ and $\gamma \in [0,H)$. If we apply this inequality to the operators inside the integral, we obtain
\begin{equation*}
\begin{aligned}
&\int_{0}^{t} \int_{-\infty}^{0} |S_\alpha(-r) \Phi \Phi^{*}S_\alpha^{*}(t-s)|_{\mathcal{L}_2} \; H(2H-1)(s-r)^{2H-2}drds  \\
&\leq C \int_{0}^{t} \int_{-\infty}^{0} e^{-\rho \alpha (-r)} (-r)^{-\gamma} \; e^{-\rho \alpha (t-s)} (t-s)^{-\gamma} \; H(2H-1)(s-r)^{2H-2}drds \\
&\leq C t^{2H-2},
\end{aligned}
\end{equation*}
for some constant $C>0$ (see the Lemma \ref{lem: integral for Q} in the Appendix for details).
\end{proof}

For clarity of exposition, we shall describe construction of the Berry-Esseen type of bounds for discretely observed solution (the continuous-time case is analogous). The calculations below are similar to those presented in \cite{Nourdin Peccati 2012 book} in section 7.4, where increments of fBm are considered. The definitions of $F_n$ from \eqref{eq: def V_n F_n - infty} applied to the stationary solution to the SPDE take the form
\begin{equation*}
\begin{aligned}
&F_n = \frac{1}{\sqrt{s_n}}\frac{1}{\sqrt{n}} \sum_{i=1}^{n}(|Z(i)|_\mathcal{V}^2 - {\rm Tr}(Q_{\infty}^{\alpha})),\\
\end{aligned}
\end{equation*}
where
\begin{equation}\label{eq: def sn for solution}
\begin{aligned}
&s_n = 2\sum_{i=-(n-1)}^{n-1}\biggl(1 - \frac{|i|}{n} \biggr) |Q_\infty^\alpha(i)|^2_{\mathcal{L}_2}.\\
\end{aligned}
\end{equation}

The upper bounds of auto-covariance operators \eqref{eq: bound for Q} yield
\[
\sum_{|i|<n} |Q_\infty^\alpha(i)|^{3/2}_{\mathcal{L}_2} \leq
	\begin{cases}
		C , \quad H< \frac{2}{3}, \\
        C \ln n, \quad H=\frac{2}{3}, \\
        C n^{3H-2}, \quad \frac{2}{3} <H, \\
	\end{cases}
\]
and
\[
\sum_{|i|<n} |Q_\infty^\alpha(i)|^{4/3}_{\mathcal{L}_2} \leq
	\begin{cases}
		C , \quad H< \frac{5}{8}, \\
        C \ln n, \quad H=\frac{5}{8}, \\
        C n^{(8H-5)/3}, \quad \frac{5}{8} <H, \\
	\end{cases}
\]
where $C$ is a (sufficiently large) constant.\\

If we plug these calculations into the bounds on cumulants \eqref{eq: 3  cumul bound infty D}, \eqref{eq: 4  cumul bound infty D}, we obtain
\begin{equation}\label{eq: B-E bounds for max cumulants - inftyD}
\max \{|\kappa_3(F_n)|,|\kappa_4(F_n)|\}
 \leq
	\begin{cases}
		C \frac{1}{\sqrt{n}}, \quad H< \frac{2}{3}, \\
        C \frac{(\ln n)^{2}}{\sqrt{n}}, \quad H=\frac{2}{3}, \\
        C n^{6H-9/2}, \quad \frac{2}{3} <H<\frac{3}{4}. \\
	\end{cases}
\end{equation}

Combining this with the $4^{th}$ moment theorem (the formula \eqref{eq:optimum B-E bounds}) we obtain

\begin{equation}\label{eq: B-E bounds for F_n - inftyD}
d_{TV}(F_n,N) \leq
	\begin{cases}
		C \frac{1}{\sqrt{n}}, \quad H< \frac{2}{3}, \\
        C \frac{(\ln n)^{2}}{\sqrt{n}}, \quad H=\frac{2}{3}, \\
        C n^{6H-9/2}, \quad \frac{2}{3} <H<\frac{3}{4}, \\
	\end{cases}
\end{equation}
where $C$ stands for a constant independent of $n$.\\

Next, we eliminate the effect of standardizing factors $s_n$ from $F_n$. Observe that for $H < \frac {3}{4}$:
\begin{equation}\label{eq: def v inftyD}
    \lim_{n\to \infty} s_n = 2\sum_{i=-\infty}^{\infty}|Q_\infty^\alpha(i)|^2_{\mathcal{L}_2} =: s_\infty^{*} < \infty.
\end{equation}

Replacing the standardizing factors by their limits leads to the following Berry-Esseen-type of bounds:

\begin{Lemma}\label{lem: B-E bounds w/o s_n - inftyD}
Consider $F_n, s_n, s_\infty^{*}$ defined above and $N \sim \mathcal{N}(0,1)$. Then
\begin{equation}\label{eq: B-E bounds w/o s_n  - inftyD}
    d_{TV}\biggl( \frac{\sqrt{s_n}}{\sqrt{s_\infty^{*}}} F_n, N\biggr) \leq 2 |1-\frac{s_n}{s_\infty^{*}}| + C \max \{|\kappa_3(F_n)|,|\kappa_4(F_n)|\}.
\end{equation}
\end{Lemma}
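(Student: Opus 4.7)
The plan is to split the desired total variation via the triangle inequality through an intermediate Gaussian whose variance matches that of $\sigma_n F_n$, where I set $\sigma_n := \sqrt{s_n/s_\infty^*}$. With $N \sim \mathcal{N}(0,1)$, so that $\sigma_n N \sim \mathcal{N}(0, \sigma_n^2)$, the starting point is
$$d_{TV}(\sigma_n F_n, N) \leq d_{TV}(\sigma_n F_n, \sigma_n N) + d_{TV}(\sigma_n N, N).$$
Each term on the right-hand side corresponds directly to one term in the claimed bound.

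First I would treat the chaos term. Since total variation distance is invariant under bijective measurable transformations of the underlying space, and multiplication by the positive constant $\sigma_n$ is such a transformation, one has $d_{TV}(\sigma_n F_n, \sigma_n N) = d_{TV}(F_n, N)$. As $F_n$ belongs to the second Wiener chaos of the isonormal process $\mathbb{X}$ (via the representation $F_n = I_2(f_n)$ established in subsection \ref{subsec: infty D sequences}) and is standardized ($\mathbb{E} F_n^2 = 1$), Remark \ref{rem: 4th moment for d_W } (the extension of Proposition \ref{prop: optimal 4th moment} to arbitrary indexed families in a fixed chaos) yields
$$d_{TV}(F_n, N) \leq C \max\{|\kappa_3(F_n)|, |\kappa_4(F_n)|\},$$
which is precisely the second summand on the right-hand side of the claim.

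It remains to bound the Gaussian comparison $d_{TV}(\sigma_n N, N) \leq 2|1 - \sigma_n^2| = 2|1 - s_n/s_\infty^*|$ by a direct density calculation. The inequality is trivial whenever $2|1 - \sigma_n^2| \geq 1$, so only the range $\sigma_n^2 \in (1/2, 3/2)$ requires real work. In this range, denoting by $\phi_\sigma$ the density of $\mathcal{N}(0,\sigma^2)$ and using
$$\phi_{\sigma_n}(x) - \phi_1(x) = \int_1^{\sigma_n^2} \partial_s \phi_{\sqrt{s}}(x)\, ds, \qquad \partial_s \phi_{\sqrt{s}}(x) = \frac{\phi_{\sqrt{s}}(x)}{2s}\left( \frac{x^2}{s} - 1\right),$$
Fubini's theorem combined with the elementary estimate $\mathbb{E}|Z^2 - 1| \leq \mathbb{E}(Z^2 + 1) = 2$ for $Z \sim \mathcal{N}(0,1)$ yields $\int |\partial_s \phi_{\sqrt{s}}(x)|\, dx \leq 1/s$. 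Integrating in $s$ over the interval between $\sigma_n^2$ and $1$, together with the uniform bound $1/s \leq 2$ on $(1/2, 3/2)$, then delivers $d_{TV}(\sigma_n N, N) \leq 2|\sigma_n^2 - 1|$. The main (but routine) obstacle is keeping track of the numerical constant so that the factor $2$ is uniformly valid in $\sigma_n$; once this classical Gaussian comparison is in hand, the triangle inequality above immediately assembles the two summands into the desired bound.
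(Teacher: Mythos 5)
Your proof is correct, but it takes a genuinely different route from the paper. The paper does not use the triangle inequality at all: it applies the Stein--Malliavin bound $d_{TV}(G,N)\leq 2\,\mathbb{E}\bigl[\bigl|\mathbb{E}\bigl(1-\langle DG,-DL^{-1}G\rangle_{\mathcal{H}}\,\big|\,F_n\bigr)\bigr|\bigr]$ (extracted from the proof of Theorem 1.2 in Nourdin--Peccati) directly to the rescaled variable $G=\tfrac{\sqrt{s_n}}{\sqrt{s_\infty^{*}}}F_n$, uses $\langle DG,-DL^{-1}G\rangle_{\mathcal{H}}=\tfrac{s_n}{s_\infty^{*}}\langle DF_n,-DL^{-1}F_n\rangle_{\mathcal{H}}$, inserts and subtracts $\tfrac{s_n}{s_\infty^{*}}$ inside the conditional expectation, and then invokes $0<\tfrac{s_n}{s_\infty^{*}}<1$ together with the cumulant bound for the remaining term. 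You instead split $d_{TV}\bigl(\sigma_nF_n,N\bigr)\leq d_{TV}(\sigma_nF_n,\sigma_nN)+d_{TV}(\sigma_nN,N)$ with $\sigma_n^2=s_n/s_\infty^{*}$, use exact scale invariance of total variation to reduce the first term to $d_{TV}(F_n,N)$, which Proposition \ref{prop: optimal 4th moment} (upper bound) controls by $C\max\{|\kappa_3(F_n)|,|\kappa_4(F_n)|\}$, and handle the second term by an elementary Gaussian-to-Gaussian density estimate; your derivative computation $\partial_s\phi_{\sqrt{s}}(x)=\tfrac{\phi_{\sqrt{s}}(x)}{2s}\bigl(\tfrac{x^2}{s}-1\bigr)$, the bound $\int|\partial_s\phi_{\sqrt{s}}(x)|\,dx\leq 1/s$, and the case split ensuring $1/s\leq 2$ (with the trivial case $2|1-\sigma_n^2|\geq 1$ disposed of by $d_{TV}\leq 1$) are all sound and do deliver the exact constant $2$ required by the statement. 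The trade-off: your argument treats the fourth moment theorem purely as a black box (only its stated upper bound is needed) and replaces Malliavin-calculus manipulations by a classical Gaussian comparison, which is more self-contained; the paper's argument avoids any case analysis for the constant and stays entirely inside the Malliavin--Stein framework, which is also what makes its Wasserstein analogue (Remark \ref{rem: 4th moment for d_W }) immediate by the same computation. Both proofs ultimately rest on the same external ingredient from Nourdin--Peccati, so neither is weaker in its hypotheses.
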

\begin{proof}
Denote by $D$ and $L^{-1}$ the Malliavin derivative and the pseudo-inverse of the generator of the Ornstein-Uhlenbeck semigroup with respect to the isonormal Gaussian process $\mathbb{X}$ on Hilbert space $\mathcal{H}$ generated from the stationary solution $Z$ as described in Section \ref{subsec: infty D sequences}. In particular, for random elements from the second Wiener chaos of $\mathbb{X}$, we have $L^{-1} I_2(f) = -\frac{1}{2}I_2(f)$. For more details on these two linear operators, consult e.g. \cite{Nourdin Peccati 2012 book}. It immediately follows from the proof of Theorem 1.2 in \cite{Nourdin Peccati 2015} that
\begin{equation*}
\begin{aligned}
   &d_{TV}\biggl( \frac{\sqrt{s_n}}{\sqrt{s_\infty^{*}}} F_n, N\biggr) \leq 2 \mathbb{E}\left[\left| \mathbb{E}(1-\langle D\frac{\sqrt{s_n}}{\sqrt{s_\infty^{*}}} F_n,-DL^{-1}\frac{\sqrt{s_n}}{\sqrt{s_\infty^{*}}} F_n \rangle_{\mathcal{H}}| F_n) \right| \right] \\
   &= 2 \mathbb{E}\left[\left| \mathbb{E}(1- \frac{s_n}{s_\infty^{*}} +\frac{s_n}{s_\infty^{*}} - \frac{s_n}{s_\infty^{*}} \langle D F_n,-DL^{-1} F_n \rangle_{\mathcal{H}}| F_n) \right| \right] \\
   &\leq 2 |1-\frac{s_n}{s_\infty^{*}}| + 2 \frac{s_n}{s_\infty^{*}} \mathbb{E}\left[\left| \mathbb{E}(1- \langle D F_n,-DL^{-1} F_n \rangle_{\mathcal{H}}| F_n) \right| \right] .
\end{aligned}
\end{equation*}
Clearly $ 0 < \frac{s_n}{s_\infty^{*}} < 1$ and the term $\mathbb{E}\left[\left| \mathbb{E}(1- \langle D F_n,-DL^{-1} F_n \rangle_{\mathcal{H}}| F_n) \right| \right]$ can be bounded above by $C \max \{|\kappa_3(F_n)|,|\kappa_4(F_n)|\}$ (which again follows from the proof of Theorem 1.2 in \cite{Nourdin Peccati 2015}).
\end{proof}

Note that by Remark \ref{rem: 4th moment for d_W } we can reformulate previous results in terms of the Wasserstein distance. \\

Next step is to extend the result from the stationary solutions to the general solutions to \eqref{eq: SPDE}. Following the approach in \cite{SebaiyViens2016} we consider the Wasserstein distance. \\

Consider a solution $(X(t),t \geq 0)$ to \eqref{eq: SPDE} with a general initial condition $X_0$ and the stationary solution $(Z(t),t \geq 0)$. Clearly
\[
X(t) = Z(t) + S_\alpha(t)(X(0)-Z(0)).
\]
Furthermore, the effect of the non-stationary term can then be controlled by the following elementary (but useful) lemma (\cite{SebaiyViens2016}):

\begin{Lemma}\label{lem: Wasserstein distance for sum}
Consider two random variables $X$ and $Z$ defined on the same probability space. Then
\[
d_W(X+Z,N) \leq d_W(Z,N) + \mathbb{E}|X|.
\]
\end{Lemma}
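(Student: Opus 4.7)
The plan is to exploit the Kantorovich-Rubinstein dual representation of the Wasserstein distance as the supremum over 1-Lipschitz test functions:
\[
d_W(Y_1,Y_2) = \sup_{f \in \mathrm{Lip}_1} \bigl| \mathbb{E} f(Y_1) - \mathbb{E} f(Y_2) \bigr|,
\]
where $\mathrm{Lip}_1$ denotes the class of Lipschitz functions $f : \mathbb{R} \to \mathbb{R}$ with Lipschitz constant at most $1$. Given this representation, the claim becomes a one-line triangle-inequality argument, so no genuine obstacle is expected; the only thing to check is that the bound $\mathbb{E}|X|$ emerges with the correct constant and does not require any additional moment or independence assumption.

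For a fixed $f \in \mathrm{Lip}_1$, first insert and subtract $\mathbb{E} f(Z)$ inside the absolute value and apply the usual triangle inequality,
\[
\bigl| \mathbb{E} f(X+Z) - \mathbb{E} f(N) \bigr| \leq \bigl| \mathbb{E} f(X+Z) - \mathbb{E} f(Z) \bigr| + \bigl| \mathbb{E} f(Z) - \mathbb{E} f(N) \bigr|.
\]
The second term is bounded by $d_W(Z,N)$ by the very definition of the Wasserstein distance. For the first term, pull the expectation through the absolute value via Jensen's inequality and use that $f$ is $1$-Lipschitz,
\[
\bigl| \mathbb{E} f(X+Z) - \mathbb{E} f(Z) \bigr| \leq \mathbb{E} \bigl| f(X+Z) - f(Z) \bigr| \leq \mathbb{E}|X|.
\]
Combining the two bounds yields $|\mathbb{E} f(X+Z) - \mathbb{E} f(N)| \leq d_W(Z,N) + \mathbb{E}|X|$ uniformly in $f \in \mathrm{Lip}_1$.

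Finally, taking the supremum over all $f \in \mathrm{Lip}_1$ on the left-hand side produces $d_W(X+Z,N)$ and leaves the right-hand side unchanged, which establishes the desired inequality. The argument does not require any joint distributional assumption between $X$ and $Z$ (beyond being defined on a common probability space so that the sum is well defined), nor any integrability of $N$ or $Z$ beyond what is implicit in $d_W(Z,N)$ being finite; if $\mathbb{E}|X| = \infty$ the inequality is trivial.
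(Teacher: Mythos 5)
Your proof is correct: the Kantorovich--Rubinstein dual representation plus the triangle inequality and the $1$-Lipschitz bound $|f(X+Z)-f(Z)|\leq |X|$ is exactly the standard argument for this elementary lemma, which the paper itself does not prove but simply quotes from the cited reference. No gaps; the observation that no independence or extra integrability is needed is also accurate.
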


\begin{Theorem}\label{thm: B-E bounds for moments - inftyD}
Let \eqref{eq: A0}, \eqref{eq: A1} and \eqref{eq: A2} be satisfied and let the previous notation prevail. Let $N_v$ denote a Gaussian random variable,
\[
N_v \sim \mathcal{N}(0,s_\infty^{*}),
\]
and define the upper-bound function:
\begin{equation}\label{eq: def xi}
 \xi_H(n) := \begin{cases}
		\frac{1}{\sqrt{n}}, \quad H \leq \frac{5}{8}, \\
        \frac{1}{n^{3-4H}}, \quad \frac{5}{8} <H<\frac{3}{4}.
	\end{cases}
\end{equation}
Then for the stationary solution $(Z(t),t \geq 0)$ to the equation \eqref{eq: SPDE} we have that
\begin{equation}\label{eq: B-E bounds for stat moments - inftyD}
\begin{aligned}
& d_{TV}\biggl[\sqrt{n}\left(\biggl(\frac{1}{n}\sum_{i=1}^{n}|Z(i)|_\mathcal{V}^2\biggr) - \frac{1}{\alpha^{2H}} {\rm Tr}(Q_{\infty})\right),N_v \biggr] \leq C \; \xi_H(n).\\
\end{aligned}
\end{equation}
Now consider a general solution $(X(t),t \geq 0)$ to \eqref{eq: SPDE} with initial condition $X_0$ such that $\mathbb{E}|X_0|^2_\mathcal{V} < \infty$. Then
\begin{equation}\label{eq: B-E bounds for non-stat moments - inftyD}
\begin{aligned}
& d_{W}\biggl[\sqrt{n}\left(\biggl(\frac{1}{n}\sum_{i=1}^{n}|X(i)|_\mathcal{V}^2\biggr) - \frac{1}{\alpha^{2H}} {\rm Tr}(Q_{\infty})\right),N_v \biggr] \leq C \; \xi_H(n).\\
\end{aligned}
\end{equation}
\end{Theorem}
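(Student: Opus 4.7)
My plan is to handle the two claims separately, reducing the stationary one directly to the machinery of Section \ref{sec: Normal approximation for Gaussian-subordinated sequences/processes} plus the covariance decay of Lemma \ref{lem: bound for Q}, and then transferring to the non-stationary case by a semigroup-decay argument combined with Lemma \ref{lem: Wasserstein distance for sum}.

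For \eqref{eq: B-E bounds for stat moments - inftyD}, I first observe that $V_n := \sqrt{n}\bigl(\tfrac{1}{n}\sum_{i=1}^{n}|Z(i)|_{\mathcal{V}}^2 - {\rm Tr}(Q_\infty^\alpha)\bigr)$ is precisely $\sqrt{s_n}\, F_n$ in the notation of Subsection \ref{subsec: infty D sequences}, applied to the stationary sequence $(Z(i))$. Since $N_v \sim \mathcal{N}(0,s_\infty^*)$ has the same law as $\sqrt{s_\infty^*}\,N$ for $N \sim \mathcal{N}(0,1)$, and total-variation distance is invariant under multiplication by a nonzero constant, the claim reduces to
\[
d_{TV}\!\Bigl(\tfrac{\sqrt{s_n}}{\sqrt{s_\infty^*}} F_n,\, N\Bigr) \le C\,\xi_H(n),
\]
and Lemma \ref{lem: B-E bounds w/o s_n - inftyD} already decomposes this quantity into a standardization residual $|1-s_n/s_\infty^*|$ and a cumulant term $\max\{|\kappa_3(F_n)|,|\kappa_4(F_n)|\}$, the latter being controlled by \eqref{eq: B-E bounds for max cumulants - inftyD}.

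The remaining task is the residual. I would write
\[
s_\infty^* - s_n \;=\; 2\sum_{|i|\ge n}|Q_\infty^\alpha(i)|_{\mathcal{L}_2}^2 \;+\; \frac{2}{n}\sum_{|i|<n} |i|\,|Q_\infty^\alpha(i)|_{\mathcal{L}_2}^2,
\]
insert $|Q_\infty^\alpha(i)|_{\mathcal{L}_2}^2 \le C|i|^{4H-4}$ from Lemma \ref{lem: bound for Q}, and estimate each piece. For $H<3/4$ the tail sum is $O(n^{4H-3})$, while the weighted sum is $O(1/n)$, $O(\log n/n)$, or $O(n^{4H-3})$ according to whether $H<1/2$, $H=1/2$, or $H>1/2$. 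A regime-by-regime comparison with $\xi_H(n)$ and the cumulant bound \eqref{eq: B-E bounds for max cumulants - inftyD} then shows: the cumulant contribution dominates for $H \le 5/8$ and equals $C/\sqrt{n} = C\xi_H(n)$; the standardization residual dominates for $5/8 < H < 3/4$ and equals $O(n^{4H-3}) = O(\xi_H(n))$; at the critical points $H=1/2, 2/3$ the logarithmic corrections are absorbed by the stronger polynomial term. This establishes the stationary bound.

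For the non-stationary bound \eqref{eq: B-E bounds for non-stat moments - inftyD}, I would use $X(i)-Z(i) = S_\alpha(i)(X_0 - Z(0))$ to decompose
\[
|X(i)|_{\mathcal{V}}^2 - |Z(i)|_{\mathcal{V}}^2 \;=\; |S_\alpha(i)(X_0 - Z(0))|_{\mathcal{V}}^2 + 2\langle S_\alpha(i)(X_0 - Z(0)),\, Z(i)\rangle_{\mathcal{V}},
\]
and denote the rescaled difference of sample second moments by $R_n$. Exponential stability \eqref{eq: A2} gives $|S_\alpha(i)|_{\mathcal{L}} \le M e^{-\rho\alpha i}$, which combined with $\mathbb{E}|X_0-Z(0)|_{\mathcal{V}}^2 < \infty$, $\mathbb{E}|Z(i)|_{\mathcal{V}}^2 = {\rm Tr}(Q_\infty^\alpha)$, and Cauchy-Schwarz, yields $\mathbb{E}|R_n| \le \tfrac{C}{\sqrt{n}}\sum_{i\ge 1} e^{-\rho\alpha i} \le C/\sqrt{n}$. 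Applying Lemma \ref{lem: Wasserstein distance for sum} together with the Wasserstein analogue of the stationary bound (guaranteed by Remark \ref{rem: 4th moment for d_W }), the non-stationary bound follows from $C\xi_H(n) + C/\sqrt{n} \le C\xi_H(n)$, since $\xi_H(n) \ge 1/\sqrt{n}$ throughout $H \in (0,3/4)$. The hardest step will be the bookkeeping in the three-case analysis of $|s_\infty^* - s_n|$ versus $\xi_H$, which is where the rate deterioration as $H \to 3/4$ becomes visible and identifies the standardization residual (not the cumulant bound) as the true obstruction for $5/8 < H < 3/4$.
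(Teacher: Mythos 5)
Your proposal is correct and follows essentially the same route as the paper: Lemma \ref{lem: B-E bounds w/o s_n - inftyD} splits the total-variation distance into the cumulant term (bounded by \eqref{eq: B-E bounds for max cumulants - inftyD}) and the standardization residual $|1-s_n/s_\infty^{*}|$ (bounded regime-by-regime via the covariance decay of Lemma \ref{lem: bound for Q}), and the non-stationary case is handled exactly as in the paper by exponential stability, Cauchy--Schwarz and Lemma \ref{lem: Wasserstein distance for sum}, with Remark \ref{rem: 4th moment for d_W } supplying the Wasserstein version. Your slightly different algebraic decomposition of $|X(i)|_{\mathcal V}^2-|Z(i)|_{\mathcal V}^2$ yields the same $Ce^{-\rho\alpha i}$ estimate, so the argument matches the paper's proof.
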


\begin{proof}
 In the stationary case, we use Lemma \ref{lem: B-E bounds w/o s_n - inftyD} to see
\begin{equation*}
\begin{aligned}
 &d_{TV}\biggl[\sqrt{n}\left(\biggl(\frac{1}{n}\sum_{i=1}^{n}|Z(i)|_\mathcal{V}^2\biggr) - \frac{1}{\alpha^{2H}} {\rm Tr}(Q_{\infty})\right),N_v \biggr] \\
 &\leq 2 |1-\frac{s_n}{s_\infty^{*}}| + C \max \{|\kappa_3(F_n)|,|\kappa_4(F_n)|\}.
\end{aligned}
\end{equation*}
The term $\max \{|\kappa_3(F_n)|,|\kappa_4(F_n)|\}$ can be estimated as in \eqref{eq: B-E bounds for max cumulants - inftyD}.\\

Next, we use the formulas \eqref{eq: def sn for solution} and \eqref{eq: def v inftyD} together with \eqref{eq: bound for Q} to obtain
\[
|1-\frac{s_n}{s_\infty^{*}}| \leq
	\begin{cases}
		C \frac{1}{n}, \quad H< \frac{1}{2}, \\
        C \frac{\ln n}{n}, \quad H=\frac{1}{2}, \\
        C n^{4H-3}, \quad \frac{1}{2} < H <\frac{3}{4}. \\
	\end{cases}
\]
Combining these two bounds, we arrive at \eqref{eq: B-E bounds for stat moments - inftyD}.\\

For the non-stationary case, we apply Lemma \ref{lem: Wasserstein distance for sum}. Consequently, we need to calculate
\begin{equation*}
\begin{aligned}
& \mathbb{E} \biggl| |X(i)|_\mathcal{V}^2 - |Z(i)|_\mathcal{V}^2\biggr| \leq \sqrt{\mathbb{E}|X(i) - Z(i)|^2_\mathcal{V}}  \; \biggl(\sqrt{\mathbb{E}(|X(i)|^2_\mathcal{V}} + \sqrt{\mathbb{E}(|Z(i)|^2_\mathcal{V}} \biggr).
\end{aligned}
\end{equation*}
For the first factor on the r.h.s., we have
\begin{equation*}
\begin{aligned}
&\sqrt{\mathbb{E}|X(i) - Z(i)|^2_\mathcal{V}} =  \sqrt{\mathbb{E}|S_\alpha(i)(X(0) - Z(0))|^2_\mathcal{V}} \\
&\leq |S_\alpha(i)|_{\mathcal{L(V)}}\sqrt{\mathbb{E}|(X(0) - Z(0))|^2_\mathcal{V}} \leq C \; e^{-\rho i},
\end{aligned}
\end{equation*}
where we employed the assumption $\mathbb{E}|X_0|^2_\mathcal{V} < \infty$ and the exponential stability \eqref{eq: A2}.\\
The second factor can be bounded above by a constant, since (using the above bound)
\begin{equation*}
\begin{aligned}
&\sqrt{\mathbb{E}(|X(i)|^2_\mathcal{V}} + \sqrt{\mathbb{E}(|Z(i)|^2_\mathcal{V}} \leq \sqrt{\mathbb{E}(|X(i) - Z(i)|^2_\mathcal{V}} + \sqrt{\mathbb{E}(|Z(i)|^2_\mathcal{V}} + \sqrt{\mathbb{E}(|Z(i)|^2_\mathcal{V}}\\
&\leq  C \; e^{-\rho i} + 2 {\rm Tr}(Q_{\infty}^{\alpha}).
\end{aligned}
\end{equation*}
Hence,
\[
\mathbb{E} \biggl||X(i)|^2_\mathcal{V} - |Z(i)|^2_\mathcal{V}\biggr|\leq C e^{-\rho i},
\]
where $C$ is a constant independent of $i$ and $\rho$ is the coefficient in the exponential stability \eqref{eq: A2}. It follows directly
\[
\mathbb{E}\biggl|  \frac{1}{\sqrt{n}}\sum_{i=1}^{n}\biggl(|X(i)|^2_\mathcal{V} - |Z(i)|^2_\mathcal{V}\biggr)\biggr| \leq C \frac{1}{\sqrt{n}},
\]
where $C$ does not depend on $n$. This, according to Lemma \ref{lem: Wasserstein distance for sum}, does not distort the upper bound $\xi_H(n)$.\\

The continuous-time case can by treated similarly.
\end{proof}

\begin{Remark}\label{rem: BE for moments - ifntyD, continuous}
In continuous-time case, we can proceed analogously to show
\begin{equation*}
\begin{aligned}
&d_{TV}\biggl[\sqrt{T}\left(\biggl(\frac{1}{T}\int_{0}^{T}|Z(t)|_\mathcal{V}^2 dt \biggr) - \frac{1}{\alpha^{2H}} {\rm Tr}(Q_{\infty})\right), N_u \biggr] \leq C \; \xi_H(T),\\
& d_{W}\biggl[\sqrt{T}\left(\biggl(\frac{1}{T}\int_{0}^{T}|X(t)|_\mathcal{V}^2 dt \biggr) - \frac{1}{\alpha^{2H}} {\rm Tr}(Q_{\infty})\right), N_u \biggr] \leq C \; \xi_H(T),\\
\end{aligned}
\end{equation*}
where $N_u \sim \mathcal{N}(0,u_\infty^{*})$ and $u_\infty^{*} = 2\int_{-\infty}^{\infty} |Q_\infty^\alpha(t)|^2_{\mathcal{L}_2}(t) dt < \infty$.
\end{Remark}

\begin{Remark}\label{rem: BE for moments - 1D}
If we observe a one-dimensional projection of the solution, we can proceed similarly. Considering the stationary centered Gaussian process  $z_t := \langle Z(t),w \rangle_\mathcal{V}$ with auto-covariance function
\begin{equation}\label{eq: def r_z}
r_z^{\alpha}(t) = \mathbb{E}(z_t z_0) = \langle Q_{\infty}^{\alpha}(t) w,w \rangle_\mathcal{V},
\end{equation}
we can show again that $r_z^{\alpha}$ is right-continuous with
\begin{equation}\label{eq: bound for r_z}
    |r_z^{\alpha}(t)| \leq \min \{r_z^{\alpha}(0), C |t|^{2H-2}\}, \quad \forall t \in \mathbb{R}.
\end{equation}
The above approach then leads to the following Berry-Esseen bounds for the stationary solution:
\begin{equation}\label{eq: B-E bounds for stat moments - 1D}
\begin{aligned}
& d_{TV}\biggl[\sqrt{n}  \left( \biggl(\frac{1}{n}\sum_{i=1}^{n}\langle Z(i), w\rangle^2_\mathcal{V}\biggr) - \frac{1}{\alpha^{2H}} \langle Q_{\infty} w, w\rangle_\mathcal{V}\right),N_v \biggr] \leq C \; \xi_H(n),\\
& d_{TV}\biggl[\sqrt{T} \left( \biggl( \frac{1}{T}\int_{0}^{T}\langle Z(t), w\rangle^2_\mathcal{V} dt \biggr) - \frac{1}{\alpha^{2H}} \langle Q_{\infty} w, w\rangle_\mathcal{V} \right), N_u \biggr] \leq C \; \xi_H(T),\\
\end{aligned}
\end{equation}
and for a general solution $(X(t),t \geq 0)$ to \eqref{eq: SPDE} with initial condition $X_0$ such that $\mathbb{E}|X_0|^2_\mathcal{V} < \infty$:
\begin{equation}\label{eq: B-E bounds for non-stat moments - 1D}
\begin{aligned}
& d_{W}\biggl[\sqrt{n}\left(\biggl(\frac{1}{n}\sum_{i=1}^{n}\langle X(i), w\rangle^2_\mathcal{V}\biggr) - \frac{1}{\alpha^{2H}}\langle Q_{\infty} w, w\rangle_\mathcal{V}\right),N_v \biggr] \leq C \; \xi_H(n),\\
& d_{W}\biggl[\sqrt{T}\left(\biggl(\frac{1}{T}\int_{0}^{T}\langle X(t), w\rangle^2_\mathcal{V} dt \biggr) - \frac{1}{\alpha^{2H}}\langle Q_{\infty} w, w\rangle_\mathcal{V}\right) , N_u \biggr] \leq C \; \xi_H(T),\\
\end{aligned}
\end{equation}
where $\xi_H(n)$ is defined in Theorem \ref{thm: B-E bounds for moments - inftyD},  $N_v \sim \mathcal{N}(0,s_\infty)$, $N_u \sim \mathcal{N}(0,u_\infty)$), $s_\infty =2\sum_{i=-\infty}^{\infty}(r_z^{\alpha}(i))^2$ and $ u_\infty = 2\int_{-\infty}^{\infty} (r_z^{\alpha}(t))^2 dt$.
\end{Remark}

\section{Berry-Esseen bounds for estimators}\label{sec: Berry-Esseen bounds for estimators}

Recall that the estimators \eqref{eq: MC_est_continuous}, \eqref{eq: MC_est_discrete}, \eqref{eq: proj_est_continuous} and \eqref{eq: proj_est_discrete} are all constructed (under assumption \eqref{eq: Phi non zero}) as monotonous, twice differentiable functions of the corresponding sample second moments, whose asymptotic properties were studied in the previous section. It is well-known, that this monotonous differentiable transformation does not distort asymptotic normality. However, when constructing the Berry-Esseen bounds for the transformed processes, we have to switch to the Kolmogorov distance localized on compacts, as suggested by the following proposition:

\begin{Proposition} \label{prop:BE bounds for transformed}
Denote by $\Psi$ the distribution function of the standard normal distribution $\mathcal{N}(0,1)$ and consider a stochastic process $(U_T: T >0)$ with mean value $\mu$ and a standardizing function $\sigma_T$, $\sigma_T \overset{T \to \infty}{\longrightarrow}  0$, such that
\[
\sup_{z\in \mathbb{R}} \biggl| \mathbb{P}\biggl(\frac{U_T-\mu}{\sigma_{T}} \leq z\biggr) - \Psi(z) \biggr| \leq \xi(T), \quad \forall T>0,
\]
$\xi(T)$ being the upper bound for the Kolmogorov distance. \\

Now let $g$ be a monotonous function, $g \in  \mathcal{C}^{2}(A)$, where $\mathbb{P}(U_T\in A) = 1$ for all $T$. Then for each $K>0$ there exists a constant $C_{K}$ such that
\[
\sup_{z\in [-K,K]} \biggl|\mathbb{P}\biggl(\frac{g(U_T)-g(\mu)}{|g'(\mu)| \sigma_{T}} \leq z\biggr) - \Psi(z) \biggr| \leq  C_{K} \max \{\xi(T),\sigma_{T}\} , \quad \forall T>0.
\]
\end{Proposition}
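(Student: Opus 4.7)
The plan is to reduce the Berry--Esseen bound for the transformed variable to the one assumed for $(U_T-\mu)/\sigma_T$ by a Taylor expansion of $g^{-1}$ around $g(\mu)$. Without loss of generality I would assume $g$ is strictly increasing on $A$, so that $g'(\mu)>0$ and $g^{-1}$ is $\mathcal{C}^2$ on a neighborhood of $g(\mu)$ with $(g^{-1})'(g(\mu))=1/g'(\mu)$; the decreasing case is identical up to a sign change of the argument. Monotonicity then gives, for every $z\in\mathbb{R}$,
\[
\mathbb{P}\!\left(\frac{g(U_T)-g(\mu)}{|g'(\mu)|\sigma_T}\le z\right) = \mathbb{P}\!\left(U_T\le g^{-1}(g(\mu)+z\,g'(\mu)\,\sigma_T)\right),
\]
and the task reduces to comparing the right-hand probability with $\Psi(z)$.

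The key step is Taylor's expansion with Lagrange remainder: on a neighborhood $V$ of $g(\mu)$ there is a finite constant $M_V$ bounding $|(g^{-1})''|$, and
\[
g^{-1}(g(\mu)+y) = \mu + \tfrac{y}{g'(\mu)} + r(y),\qquad |r(y)|\le \tfrac{1}{2}M_V\,y^2,
\]
for every $y$ with $g(\mu)+y\in V$. Since $\sigma_T\to 0$, for all sufficiently large $T$ and every $z\in[-K,K]$ the argument $g(\mu)+z\,g'(\mu)\,\sigma_T$ lies in $V$, and plugging in $y=z\,g'(\mu)\,\sigma_T$ and dividing through by $\sigma_T$ rewrites the event above as $\{(U_T-\mu)/\sigma_T\le z+\delta_T(z)\}$ with
\[
|\delta_T(z)|\le C'\,z^2\sigma_T\le C'K^2\sigma_T \qquad \text{uniformly in } z\in[-K,K],
\]
where $C'=\tfrac{1}{2}M_V(g'(\mu))^2$.

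Finally I would apply the hypothesis at the shifted point $z+\delta_T(z)$ and use the fact that $\Psi$ is globally Lipschitz with constant $1/\sqrt{2\pi}$; this yields
\[
\left|\mathbb{P}\!\left(\frac{g(U_T)-g(\mu)}{|g'(\mu)|\sigma_T}\le z\right)-\Psi(z)\right| \;\le\; \xi(T) + \tfrac{C'K^2}{\sqrt{2\pi}}\,\sigma_T
\]
for every $z\in[-K,K]$. Taking the supremum over $[-K,K]$ gives the claim with a constant $C_K$ depending only on $K$, $g'(\mu)$, and $M_V$.

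The main (and essentially only) obstacle is the scaling of the quadratic remainder: the shift $\delta_T(z)$ is of order $z^2\sigma_T$, which cannot be controlled uniformly in $z\in\mathbb{R}$. This is precisely why the Kolmogorov-type supremum has to be localized to a compact set $[-K,K]$ and why the constant $C_K$ must be allowed to depend on $K$; no further difficulty is anticipated.
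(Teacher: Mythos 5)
Your proposal is correct and takes essentially the same route as the paper's proof: both reduce the event to one for $(U_T-\mu)/\sigma_T$ by inverting $g$, control the resulting shift of the threshold by a second-order expansion of $\psi=g^{-1}$ near $g(\mu)$ (the paper via two mean-value-theorem steps, you via the Lagrange remainder), and use the Lipschitz property of $\Psi$ to absorb a shift of size $O(K^2\sigma_T)$ uniformly on $[-K,K]$. The restriction to sufficiently large $T$ so that the shifted argument stays in a fixed neighborhood of $g(\mu)$ appears in the paper's argument in exactly the same way.
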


\begin{proof}
The idea of the proof follows calculations from \cite{Sottinen-Vitasaary} (see the proof of Theorem 3.2. therein). Denote $\psi := g^{-1}$, $\phi := g(\mu)$ and assume $g^{\prime} <0$ (the case $g^{\prime} >0$ is similar). It is easy to see that

\begin{equation} \label{eq: BE transformed 1}
\begin{aligned}
&\biggl|\mathbb{P}\biggl(\frac{g(U_T)-g(\mu)}{|g'(\mu)| \sigma_{T}} \leq z\biggr) - \Psi(z) \biggr|  \\
&\leq \biggl|\mathbb{P}\biggl(\frac{U_T-\mu}{\sigma_{T}} \geq \nu \biggr) - (1-{\Psi}(\nu)) \biggr| + \biggl| \Psi(-\nu) - \Psi(z) \biggr| \leq \xi(T) + |-\nu - z|,
\end{aligned}
\end{equation}
where
\[
\nu =\frac{\psi(z\sigma_{T}|g'(\mu)| + \phi) - \psi({\phi})}{\sigma_{T}} = \frac{1}{\sigma_{T}} \psi'(\eta_{T})z\sigma_{T}|g'(\mu)|=z\frac{\psi'(\eta_{T})}{|\psi'(\phi)|}=-z\frac{\psi'(\eta_{T})}{\psi'(\phi)},
\]
with $\eta_{T} \in (\phi, \phi + z\sigma_{T}|g'(\mu)|)$. Now

\begin{equation} \label{eq: BE transformed 2}
\begin{aligned}
&|-\nu - z| = |z\frac{\psi'(\eta_{T})}{\psi'(\phi)} - z|=|\frac{z}{\psi'(\phi)}|\;|\psi'(\eta_{T})-\psi'(\phi)| \\
&= |\frac{z}{\psi'(\phi)}|\;|\psi''(\xi_{T})(\eta_{T}-\phi)|\leq |\frac{z}{\psi'(\phi)}|\;|\psi''(\xi_{T})| \; |z\sigma_{T}|\;|g'(\mu)| \\
&\leq |z|^{2}|g'(\mu)|^{2}\;\sigma_{T}\; \sup \{ |\psi''(y)|\; : \; y \in [g(\mu) - z\sigma_{T}|g'(\mu)|,\;g(\mu) + z\sigma_{T}|g'(\mu)|]\}.
\end{aligned}
\end{equation}
Clearly for any $K>0$ there is $\delta >0$ and $T_{0}>0$ such that
\[
\sup_{|z|\leq K}|-\nu - z| \leq K^{2}|g'(\mu)|^{2}\;\sigma_{T}\;\sup \{|\psi''(y)|\; : \; y \in [g(\mu) - \delta,\;g(\mu) + \delta]\}  \leq C_{K} \sigma_{T},
\]
for all $T>T_{0}$.
Combining this estimate with \eqref{eq: BE transformed 1} we obtain the desired result.
\end{proof}

We shall first formulate the Berry-Esseen bounds for the estimators constructed by means of the stationary solution. The minimum contrast estimators are well suited for these situations. \\

Note that the Berry-Esseen bounds for the sample moments from previous section can be reformulated in terms of the Kolmogorov distance, therefore Proposition \ref{prop:BE bounds for transformed} can be used.

\begin{Theorem}\label{thm: B-E bounds for estimators - stationary}
Let \eqref{eq: A0}, \eqref{eq: A1} and \eqref{eq: A2} be satisfied. Consider the estimators $\hat{\alpha}_{T}, \check{\alpha}_{n}, \tilde{\alpha}_{T}, \bar{\alpha}_{n}$ defined in \eqref{eq: MC_est_continuous} - \eqref{eq: proj_est_discrete} based on the \textbf{stationary solution} to the equation \eqref{eq: SPDE} (i.e. $X(t) = Z(t)$) and recall the upper-bound function:
\begin{equation}\label{eq: def xi_estim_stat}
 \xi_H(t) := \begin{cases}
		\frac{1}{\sqrt{t}}, \quad H \leq \frac{5}{8}, \\
        \frac{1}{t^{3-4H}}, \quad \frac{5}{8} <H<\frac{3}{4}.
	\end{cases}
\end{equation}
Then for each $K>0$, there exists a constant $C_K$, such that
\begin{equation*}
\begin{aligned}
&\sup_{z\in [-K,K]} \biggl|\mathbb{P}\biggl(\sqrt{n}\frac{\check{\alpha}_{n} - \alpha}{\gamma_{\alpha} \; \sqrt{s_\infty^{*}}} \leq z\biggr) - \Psi(z) \biggr| \leq  C_{K} \; \xi_H(n), \quad \forall n>0,\\
&\sup_{z\in [-K,K]} \biggl|\mathbb{P}\biggl(\sqrt{T}\frac{\hat{\alpha}_{T} - \alpha}{\gamma_{\alpha} \; \sqrt{u_\infty^{*}}} \leq z\biggr) - \Psi(z) \biggr| \leq  C_{K} \; \xi_H(T), \quad \forall T>T_0 \quad \text{for some }T_0>0,\\
&\sup_{z\in [-K,K]} \biggl|\mathbb{P}\biggl(\sqrt{n}\frac{\bar{\alpha}_{n} - \alpha}{\delta_{\alpha} \; \sqrt{s_\infty}} \leq z\biggr) - \Psi(z) \biggr| \leq  C_{K} \; \xi_H(n), \quad \forall n>0,\\
&\sup_{z\in [-K,K]} \biggl|\mathbb{P}\biggl(\sqrt{T}\frac{\tilde{\alpha}_{T} - \alpha}{\delta_{\alpha} \; \sqrt{u_\infty}} \leq z\biggr) - \Psi(z) \biggr| \leq  C_{K} \; \xi_H(T), \quad \forall T>T_0 \quad \text{for some }T_0>0,\\
\end{aligned}
\end{equation*}
where
\begin{equation*}
\begin{aligned}
&\gamma_{\alpha} = \frac{\alpha^{1+2H}}{2H \; {\rm Tr} (Q_{\infty})},\\
&\delta_{\alpha} = \frac{\alpha^{1+2H}}{2H \; \langle Q_{\infty} w,w\rangle_\mathcal{V}}.
\end{aligned}
\end{equation*}
\end{Theorem}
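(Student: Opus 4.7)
The plan is to view each estimator as the image of a sample second moment under the monotone smooth transformation
\[
g(x) := \left( \frac{x}{{\rm Tr}(Q_\infty)} \right)^{-\frac{1}{2H}}, \qquad x>0,
\]
(respectively, with $\langle Q_\infty w, w\rangle_\mathcal{V}$ in place of ${\rm Tr}(Q_\infty)$ for the projected estimators), and to apply Proposition \ref{prop:BE bounds for transformed} to the Berry-Esseen bounds for sample second moments already obtained in Theorem \ref{thm: B-E bounds for moments - inftyD} and Remarks \ref{rem: BE for moments - ifntyD, continuous}--\ref{rem: BE for moments - 1D}.

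Concretely, for $\hat\alpha_T$ set $U_T = \tfrac{1}{T}\int_0^T |Z(t)|_\mathcal{V}^2 dt$. By Remark \ref{rem: BE for moments - ifntyD, continuous}, in the stationary case we have a total-variation (hence Kolmogorov) bound
\[
\sup_{z\in\mathbb{R}}\Big|\mathbb{P}\big(\sqrt{T}(U_T-\mu)/\sqrt{u_\infty^{*}}\le z\big) - \Psi(z)\Big| \le C\,\xi_H(T),
\]
where $\mu = \alpha^{-2H}{\rm Tr}(Q_\infty)$ and the standardizing function is $\sigma_T = \sqrt{u_\infty^{*}/T}$. A direct computation gives $g(\mu)=\alpha$ and
\[
g'(\mu) = -\frac{1}{2H}\,\alpha\,\mu^{-1} = -\frac{\alpha^{1+2H}}{2H\,{\rm Tr}(Q_\infty)} = -\gamma_\alpha,
\]
while $g\in\mathcal{C}^\infty((0,\infty))$ and $\mathbb{P}(U_T>0)=1$ (nondegeneracy of $Z$ follows from $\Phi\ne 0$). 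Proposition \ref{prop:BE bounds for transformed} then yields, for every $K>0$,
\[
\sup_{|z|\le K}\Big|\mathbb{P}\big(\sqrt{T}(\hat\alpha_T-\alpha)/(\gamma_\alpha\sqrt{u_\infty^{*}})\le z\big) - \Psi(z)\Big| \le C_K\max\{\xi_H(T),\sigma_T\}.
\]

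It remains to verify that $\sigma_T=O(\xi_H(T))$ so the maximum collapses to $C_K\,\xi_H(T)$. For $H\le 5/8$ both quantities equal $1/\sqrt{T}$ up to constants; for $5/8<H<3/4$ one has $3-4H<1/2$, so $T^{-1/2}\le T^{-(3-4H)}=\xi_H(T)$ and again $\sigma_T\le C\,\xi_H(T)$. This gives the bound for $\hat\alpha_T$; the bound for $\check\alpha_n$ is identical with $n$ in place of $T$, using the discrete-time statement of Theorem \ref{thm: B-E bounds for moments - inftyD} and $s_\infty^{*}$ in place of $u_\infty^{*}$.

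For the projection-based estimators $\tilde\alpha_T$ and $\bar\alpha_n$ the argument is the same but applied to the real-valued stationary Gaussian process $z_t=\langle Z(t),w\rangle_\mathcal{V}$ using the bounds of Remark \ref{rem: BE for moments - 1D}; the transformation $g$ uses $\langle Q_\infty w, w\rangle_\mathcal{V}$ in the denominator, and one checks $|g'(\mu)|=\delta_\alpha$ exactly as above. The only mildly delicate point is the distance conversion: Proposition \ref{prop:BE bounds for transformed} is stated in Kolmogorov distance on compacts, so we first relax the total-variation bounds of Theorem \ref{thm: B-E bounds for moments - inftyD} and its remarks to Kolmogorov (which is automatic since $d_{\rm Kol}\le d_{TV}$), and then the $\sigma_T$ correction term built into Proposition \ref{prop:BE bounds for transformed} -- originating from the second-order Taylor expansion of $\psi=g^{-1}$ -- happens to be of the same order as the Berry-Esseen bound for the underlying moment. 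This is the only nontrivial verification; everything else is a mechanical book-keeping.
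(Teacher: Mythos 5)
Your proposal is correct and follows essentially the same route as the paper: the paper's proof is exactly "apply Proposition \ref{prop:BE bounds for transformed} to the moment bounds of Theorem \ref{thm: B-E bounds for moments - inftyD} and Remarks \ref{rem: BE for moments - ifntyD, continuous}--\ref{rem: BE for moments - 1D}," and you supply the same ingredients, including the computation $|g'(\mu)|=\gamma_\alpha$ (resp. $\delta_\alpha$), the total-variation-to-Kolmogorov relaxation, and the observation that $\sigma_T\sim T^{-1/2}=O(\xi_H(T))$ so the extra term from the transformation does not worsen the rate. In fact your write-up fills in details the paper leaves implicit.
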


\begin{proof}
It is a direct consequence of the Theorem \ref{thm: B-E bounds for moments - inftyD} (cf. also Remarks \ref{rem: BE for moments - ifntyD, continuous} and \ref{rem: BE for moments - 1D}) and the Proposition \ref{prop:BE bounds for transformed}. \\
\end{proof}

\begin{Remark}\label{rem: B-E bounds for estimators - non-stationary}
In case of \textbf{general (non-stationary) solution}, we can proceed similarly. First, we have to replace the Wasserstein distance by the Kolmogorov distance. For this purpose, we can utilize the following well-known general inequality:
\begin{equation}\label{eq: Kol vs Was}
d_{Kol}(X,N) \leq 2\sqrt{C\; d_{W}(X,N)},
\end{equation}
where $X$ and $N$ are any random variables and $N$ has absolutely continuous distribution with the density function bounded by the constant $C$. This inequality, however, decelerates the speed of convergence of the local Berry-Esseen bounds. \\
In view of Theorem \ref{thm: B-E bounds for moments - inftyD} (cf. Remarks \ref{rem: BE for moments - ifntyD, continuous} and \ref{rem: BE for moments - 1D}) and Proposition \ref{prop:BE bounds for transformed} we obtain results corresponding to Theorem \ref{thm: B-E bounds for estimators - stationary}, but with upper bounds for the local Kolmogorov distance being $c_{K} \; \sqrt{\xi_H(n)}$ and $c_{K} \; \sqrt{\xi_H(T)}$ instead of $C_{K} \; \xi_H(n)$ and $C_{K} \; \xi_H(T)$, respectively.
\end{Remark}

The central limit theorem for the estimators is now an easy corollary to the previous theorem.
\begin{Corollary}\label{cor: CLT}
Assume \eqref{eq: A0}, \eqref{eq: A1}, \eqref{eq: A2}, $\mathbb{E}|X_0|^2_\mathcal{V} < \infty$ and $H<\frac{3}{4}$. For the estimators \eqref{eq: MC_est_continuous}, \eqref{eq: MC_est_discrete}, \eqref{eq: proj_est_continuous} and \eqref{eq: proj_est_discrete} the central limit theorems
\begin{equation*}
\begin{aligned}
&\sqrt{n}(\check{\alpha}_{n} - \alpha) \overset{d}{\to} \mathcal{N}(0,\sigma_1^{2}),\\
&\sqrt{T}(\hat{\alpha}_{T} - \alpha) \overset{d}{\to} \mathcal{N}(0,\sigma_2^{2}),\\
&\sqrt{n}(\bar{\alpha}_{n} - \alpha) \overset{d}{\to} \mathcal{N}(0,\sigma_3^{2}),\\
&\sqrt{T}(\tilde{\alpha}_{T} - \alpha) \overset{d}{\to} \mathcal{N}(0,\sigma_4^{2}),\\
\end{aligned}
\end{equation*}
hold true for
\begin{equation*}
\begin{aligned}
&\sigma_1 = \frac{\alpha^{1+2H}}{2H \; {\rm Tr} (Q_{\infty})} \sqrt{2\sum_{i=-\infty}^{\infty}|Q_\infty^\alpha(i)|^2_{\mathcal{L}_2}} ,\\
&\sigma_2 = \frac{\alpha^{1+2H}}{2H \; {\rm Tr} (Q_{\infty})} \sqrt{2\int_{-\infty}^{\infty} |Q_\infty^\alpha(t)|^2_{\mathcal{L}_2}(t) dt} ,\\
&\sigma_3 = \frac{\alpha^{1+2H}}{2H \; \langle Q_{\infty} w,w\rangle_\mathcal{V}} \sqrt{2\sum_{i=-\infty}^{\infty}(\langle Q_{\infty}^{\alpha}(i) w,w \rangle_\mathcal{V})^2}, \\
&\sigma_4 = \frac{\alpha^{1+2H}}{2H \; \langle Q_{\infty} w,w\rangle_\mathcal{V}} \sqrt{2\int_{-\infty}^{\infty} (\langle Q_{\infty}^{\alpha}(t) w,w \rangle_\mathcal{V})^2 dt}.\\
\end{aligned}
\end{equation*}
\end{Corollary}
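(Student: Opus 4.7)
The plan is to read this corollary as a qualitative restatement of the quantitative Berry-Esseen bounds already proved in Theorem \ref{thm: B-E bounds for estimators - stationary} and Remark \ref{rem: B-E bounds for estimators - non-stationary}. For $H < 3/4$ the rate function $\xi_{H}$ defined in \eqref{eq: def xi_estim_stat} satisfies $\xi_{H}(n), \xi_{H}(T) \to 0$, so the local Kolmogorov bounds from those results force convergence in distribution on all of $\mathbb{R}$ by a standard Portmanteau-type argument.

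Concretely, for the stationary case $X = Z$, Theorem \ref{thm: B-E bounds for estimators - stationary} applied to $\check{\alpha}_{n}$ gives, for every $K > 0$,
\[
\sup_{z \in [-K,K]} \left| \mathbb{P}\left( \frac{\sqrt{n}(\check{\alpha}_{n} - \alpha)}{\gamma_{\alpha}\sqrt{s_{\infty}^{*}}} \leq z \right) - \Psi(z) \right| \leq C_{K}\,\xi_{H}(n) \to 0.
\]
For any fixed $z \in \mathbb{R}$, taking $K = |z| + 1$ yields pointwise convergence of the distribution function to $\Psi$; since $\Psi$ is continuous, this is precisely convergence in distribution to $\mathcal{N}(0,1)$. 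Multiplying by the deterministic constant $\gamma_{\alpha}\sqrt{s_{\infty}^{*}}$ then gives $\sqrt{n}(\check{\alpha}_{n} - \alpha) \overset{d}{\to} \mathcal{N}(0,\sigma_{1}^{2})$ with $\sigma_{1} = \gamma_{\alpha}\sqrt{s_{\infty}^{*}}$. Substituting the definitions of $\gamma_{\alpha}$ from Theorem \ref{thm: B-E bounds for estimators - stationary} and of $s_{\infty}^{*}$ from \eqref{eq: def v inftyD} recovers the stated expression for $\sigma_{1}$. The same recipe handles $\hat{\alpha}_{T}$ (with $u_{\infty}^{*}$ replacing $s_{\infty}^{*}$ via Remark \ref{rem: BE for moments - ifntyD, continuous}), $\bar{\alpha}_{n}$, and $\tilde{\alpha}_{T}$ (with $\delta_{\alpha}, s_{\infty}, u_{\infty}$ taken from Remark \ref{rem: BE for moments - 1D}), yielding $\sigma_{2}, \sigma_{3}, \sigma_{4}$ as claimed.

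For the genuinely non-stationary setting (arbitrary $X_{0}$ with $\mathbb{E}|X_{0}|^{2}_{\mathcal{V}} < \infty$), Remark \ref{rem: B-E bounds for estimators - non-stationary} supplies local Kolmogorov bounds of the coarser order $\sqrt{\xi_{H}(n)}$ or $\sqrt{\xi_{H}(T)}$, obtained by combining the Wasserstein-distance bounds of Theorem \ref{thm: B-E bounds for moments - inftyD} (and its two analogues) with the inequality $d_{Kol}(X,N) \leq 2\sqrt{C\,d_{W}(X,N)}$ and then applying Proposition \ref{prop:BE bounds for transformed}. Since $\sqrt{\xi_{H}(n)} \to 0$ and $\sqrt{\xi_{H}(T)} \to 0$ whenever $H < 3/4$, the same Portmanteau argument as above delivers the four asymptotic normality statements without any change in the asymptotic variances.

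There is no substantive obstacle here; the corollary is essentially bookkeeping on top of the earlier work. The only items worth checking carefully are that the normalizing constants appearing inside the probability in Theorem \ref{thm: B-E bounds for estimators - stationary} (namely $\gamma_{\alpha}\sqrt{s_{\infty}^{*}}$, $\gamma_{\alpha}\sqrt{u_{\infty}^{*}}$, $\delta_{\alpha}\sqrt{s_{\infty}}$, $\delta_{\alpha}\sqrt{u_{\infty}}$) coincide with the $\sigma_{i}$ listed in the statement, and that the passage from local uniform convergence on $[-K,K]$ to convergence in distribution is indeed legitimate, which it is because $\Psi$ has no atoms.
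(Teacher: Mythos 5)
Your proposal is correct and follows essentially the same route as the paper, whose proof of Corollary \ref{cor: CLT} is simply the observation that $\xi_H(n)\to 0$ for $H<\tfrac{3}{4}$, so the bounds of Theorem \ref{thm: B-E bounds for estimators - stationary} and Remark \ref{rem: B-E bounds for estimators - non-stationary} force the stated convergences. You merely spell out the implicit details (the local-Kolmogorov-to-CLT passage and the identification of the $\sigma_i$ with the normalizing constants), which is exactly the intended argument.
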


\begin{proof}
Observe that for $H<\frac{3}{4}$ we have $\xi_H(n) \to 0$ with $n \to \infty$.
\end{proof}

\section{Examples}\label{sec: examples}

\subsection{Parabolic equation with distributed fractional noise}\label{ex: parab. eq. distr noise}
Consider the following formal parabolic equation with fractional noise

\begin{equation}\label{eq: parabolic_SPDE}
\frac{\partial u}{\partial t}(t,\xi) = - \alpha (-\Delta)^{m} u (t,\xi) + \eta^H(t,\xi), \quad \text{ for } (t,\xi) \in \mathbb{R}_+ \times \mathcal{O},
\end{equation}
accompanied by the initial condition
\begin{equation}\label{eq: parabolic_SPDE_IC}
u(0,\xi) = x(\xi), \quad \xi \in \mathcal{O},
\end{equation}
and the Dirichlet boundary conditions
\begin{equation}\label{eq: parabolic_SPDE_BC}
\forall j = 0,...,m-1: \quad \frac{\partial^j u}{\partial \nu^j}(t,\xi) = 0, \quad \text{ for } (t,\xi) \in [0,+\infty) \times \partial \mathcal{O},
\end{equation}

where $\mathcal{O}$ is a bounded domain in $\mathbb{R}^d$ with smooth boundary $\partial \mathcal{O}$, $\Delta$ the Laplace operator, $\alpha > 0$ an unknown parameter and $\frac{\partial}{\partial \nu}$ denotes the normal derivative. The noise term $\eta^H(t,\xi)$ can be viewed as a formal time-derivative of a fractional Brownian motion with Hurst parameter $H \in (0,1)$, either white or correlated in space. Note that the particular case $m=1$ corresponds to the stochastic heat equation.\\

We can reformulate the above parabolic problem rigorously as the stochastic evolution equation
\begin{equation} \label{eq: parabolic SEE}
\begin{aligned}
dX(t) &= \alpha A X(t) dt + \Phi dB^{H}(t),\\
X(0) &= X_0,
\end{aligned}
\end{equation}
in the Hilbert space $\mathcal{V} = U = L^2(\mathcal{O})$, where $X_0 \in L^2(\mathcal{O})$, $A = -(-\Delta)^{m}$ with domain
\[
Dom(A) = \{y \in W^{2m}_{2}(\mathcal{O}): \frac{\partial^j y}{\partial \nu^j} = 0 \text{ on } \partial \mathcal{O}, \; j=0,...,m-1 \},
\]
where $W^{2m}_{2}(\mathcal{O})$ is the standard Sobolev space (see e.g. \cite{Triebel83}). For the noise part, $B^H$ denotes the cylindrical fractional Brownian motion with Hurst parameter $H$ and $\Phi$, being the bounded operator on $L^2(\mathcal{O})$, determines the space correlations. It clearly fulfills the condition \eqref{eq: A0} with $\epsilon = 1$.\\

It is a well-known fact that $A$ generates an analytic semigroup $(S(t):t\geq 0)$, for which
\[
|S(t)\Phi|_{\mathcal{L}_2} \leq |S(t)|_{\mathcal{L}_2}|\Phi|_{\mathcal{L}} \leq ct^{-\frac{d}{4m}}, \quad \forall t \in (0,T],
\]
and
\[
|S(t)|_{\mathcal{L}}\leq M e^{-\rho_1 t}, \quad \forall t > 0,
\]
where $-\rho_1 < 0$ is the greatest eigenvalue of $A$.
Hence, conditions \eqref{eq: A1} and \eqref{eq: A2} are fulfilled whenever $H > \frac{d}{4m}$. In this case, we can apply Theorem \ref{cor: consistency} to see that the estimators \eqref{eq: MC_est_continuous} and \eqref{eq: MC_est_discrete} (defined if $\Phi \neq 0$) and estimators \eqref{eq: proj_est_continuous} and \eqref{eq: proj_est_discrete} (defined if $\langle Q_{\infty} w,w\rangle_\mathcal{V} \neq 0$) are strongly consistent. If we assume, in addition, that $H<\frac{3}{4}$, Corollary \ref{cor: CLT} implies asymptotic normality of the estimators with the upper bounds for the speed of the convergence given in Theorem \ref{thm: B-E bounds for estimators - stationary} (resp. Remark \ref{rem: B-E bounds for estimators - non-stationary}).\\

\begin{Remark}\label{rem: non-CLT}
Note that for $H > \frac{3}{4}$, Corollary \ref{cor: CLT} may not hold. If the equation \eqref{eq: parabolic SEE} is diagonalizable (e.g. if $\Phi$ is identity so that the noise is white in space), the projection $(\langle X(t),e_k \rangle_\mathcal{V}, t \geq 0)$ onto the common eigenvector $e_k$ of the operators $\Phi$ and $A$ is a one-dimensional fractional Ornstein-Uhlenbeck process. It is well known for such process (see e.g. Remark 16 in \cite{SebaiyViens2016})
that if $H > \frac{3}{4}$, the sequence
\[
\frac{\sum_{i=1}^{n}|\langle X(i),e_k \rangle_\mathcal{V}|^{2}-\langle Q_{\infty} e_k,e_k\rangle_\mathcal{V}}{n^{2H-1}}
\]
converges in distribution to the (non-Gaussian) law of the second-chaos variable, also called the (scaled) Rosenblatt law.
\end{Remark}

\subsection{Parabolic equation with pointwise noise} \label{ex: parab. eq. point noise}
Consider the heat equation
\begin{equation} \label{eq: heat eq point noise - naive}
\frac{\partial u}{\partial t}(t,\xi) =  \alpha \Delta u(t,\xi) + \delta_y \eta^H(t), \quad \text{ for } (t,\xi) \in \mathbb{R}_+ \times \mathcal{O}
\end{equation}
with the Dirichlet boundary condition $u|_{\mathbb{R}_+ \times \partial \mathcal{O}} = 0$ and the initial condition $u(0,.)=X_0$, where $\mathcal{O} \subset \mathbb{R}^d$ is a bounded domain with smooth boundary $\partial \mathcal{O}$, $\alpha >0$ is a parameter (e.g., the heat conductivity), $y\in \mathcal{O}$ is fixed, $\delta_y$ stands for the Dirac distribution at the point $y$ and $(\eta^H(t))$ is formally one-dimensional fractional noise with the Hurst parameter $H\in(0,1)$.\\

The equation \eqref{eq: heat eq point noise - naive} may be treated rigorously as the system
\begin{equation}\label{eq:heat eq point noise - rigor}
    dX(t) = \alpha AX(t) dt + \Phi dB^H(t), \quad t>0,
\end{equation}
\begin{equation}\label{eq:heat eq point noise - init cond}
    X(0)=X_0
\end{equation}
in a standard way in the state space $\mathcal{V} = L^2(\mathcal{O})$, where $A=\Delta|_{Dom(A)}$, $Dom(A) = W^{2}_{2}(\mathcal{O})  \cap W^{1}_{2,0}(\mathcal{O})$ is the Dirichlet Laplacian, $(B^H(t),t\in \mathbb{R})$ is the fractional Brownian motion in $U = \mathbb{R}$ and $\Phi = \delta_y$. It is well known that the operator $A$ is strictly negative and generates exponentially stable analytic semigroup (cf. \eqref{eq: A2}). To verify \eqref{eq: A0} (where we put $\beta = 0)$ note that $\Phi \in \mathcal{L}(\mathbb{R}, (\mathcal{C}(\mathcal{O}))') \cong (\mathcal{C}(\mathcal{O}))'$ and $Dom ((-A)^{1-\epsilon}) \subset W^{2(1-\epsilon)}_{2}(\mathcal{O})$ for each $\epsilon \in (0,1)$. By Sobolev embedding theorem we have
\[
(\mathcal{C}(\mathcal{O}))' \hookrightarrow (W^{2(1-\epsilon)}_{2}(\mathcal{O}))' \hookrightarrow (Dom ((-A)^{1-\epsilon}))',
\]
if $0<\epsilon<1-\frac{d}{4}$ which verifies \eqref{eq: A0}. Furthermore, we have that
\[
|S(t)\Phi|_{\mathcal{L}}\leq |S(t)|_{\mathcal{L}(D_A^{\epsilon-1},\mathcal{V})}  \cdot |(-A)^{\epsilon -1} \Phi|_{\mathcal{L}} \leq \frac{c_T}{t^{1-\epsilon}}, \quad t \in (0,T], T>0,
\]

and since $U=\mathbb{R}$ operator norm and Hilbert-Schmidt norm of $S(t)\Phi$ are equivalent. Hence, the condition \eqref{eq: A1} is satisfied if $\gamma = 1-\epsilon < H$. Therefore all conditions \eqref{eq: A0}-\eqref{eq: A2} are satisfied by a choice of $\epsilon$,
\[
1-\frac{d}{4} > \epsilon > 1-H,
\]
which is possible if $H>\frac{d}{4}$. Note that this restriction is imposed because in this Example we consider only solution with values in a function space (more precisely, the state space $\mathcal{V}$ is $L^2(\mathcal{O})$). If it is omitted the solution will still exist in a suitable distribution space.\\

Given $w \in \mathcal{V} = L^2(\mathcal{O})$, let us examine the condition $\langle Q_{\infty} w,w\rangle_\mathcal{V} > 0$ that is needed to define the estimators \eqref{eq: proj_est_continuous} and \eqref{eq: proj_est_discrete}. The initial value $\tilde{X}$ for the equation \eqref{eq: SPDE} (with $\alpha = 1$), the law of which is the stationary one, may be defined as
\[
\tilde{X} = \int_{-\infty}^{0}S(-t)\Phi dB^{H}(t)
\]
(see e.g. \cite{MaPo-Ergo2}). Therefore, by reflexivity of increments of the process $(B^{H}(t), t\geq 0)$ (cf. \cite{Co}) we have that
\[
Law(\tilde{X}) = Law\left(\int_{0}^{\infty}S(t)\Phi dB^{H}(t)\right) = N(0,Q_{\infty}),
\]
hence
\[
\langle Q_{\infty} w,w\rangle_\mathcal{V} = \mathbb{E} \left\langle \int_{0}^{\infty}S(t)\Phi dB^{H}(t), w\right\rangle^2_\mathcal{V} = \mathbb{E} \left( \int_{0}^{\infty} \langle \Phi^{*} S^{*}(t) w, dB^{H}(t) \rangle_\mathcal{V} \right)^2.
\]
For simplicity, assume that $H > \frac{1}{2}$ (for $H < \frac{1}{2}$ the argument is similar). Computing the variance on the r.h.s. we obtain
\begin{equation}\label{eq: Qww by K}
\langle Q_{\infty} w,w\rangle_\mathcal{V} = \int_{0}^{\infty} \left|\mathcal{K}^{*}\left(\Phi^{*} S^{*}(.) w\right) (r)\right|^2 dr,
\end{equation}
where
\[
\mathcal{K}^{*}(f)(r) = r^{\frac{1}{2}-H}I_{-}^{H-\frac{1}{2}}\left(u^{H-\frac{1}{2}}f(u)\right)(r), \quad r>0,
\]
is defined for $f:\mathbb{R}_{+} \to \mathbb{R}$ such that $\int_{0}^{\infty}|\mathcal{K}^{*}(f)(r)|^2 dr < \infty$ and $I_{-}^{\alpha}$ denotes the fractional integral
\[
(I_{-}^{\alpha} f)(r) = \Gamma(\alpha)^{-1}\int_{\mathbb{R}}f(u)(u-r)^{\alpha -1}_{+} du, \quad \alpha \in (0,1),
\]
(cf. \cite{PT} for details). Assume that $\langle Q_{\infty} w,w\rangle_\mathcal{V} =0$ for some $w \in \mathcal{V}$. By \eqref{eq: Qww by K} and Lemma 6.1 in \cite{PT} we have that $\Phi^{*} S^{*}(t) w = 0$ for $t>0$. On the other hand,
\[
\Phi^{*} S^{*}(t) w = \int_{\mathcal{O}}G(t,y,\xi) w(\xi) d\xi,
\]
where $G$ is the Green function corresponding to Dirichlet Laplacian on the domain $\mathcal{O}$. Therefore if we assume that $w \geq 0$ a.e. and $w > 0$ on a set of positive Lebesgue measure, by strict positivity of the Green function $G$ on the domain $\mathbb{R}_{+} \times \mathcal{O} \times \mathcal{O}$ we obtain $\Phi^{*} S^{*}(t) w \neq 0$ and by contradiction we get $\langle Q_{\infty} w,w\rangle_\mathcal{V} > 0$, the condition needed in \eqref{eq: proj_est_continuous} and \eqref{eq: proj_est_discrete}. \\

For instance, if $w$ is a characteristic function (indicator) of a subset of $\mathcal{O}$ of positive Lebesgue measure, we may consider the estimators \eqref{eq: proj_est_continuous} and \eqref{eq: proj_est_discrete} to be built upon a partial observation of the process "through a window".\\

On the other hand, if for example $\mathcal{O} = (0,1)$ and $w(\xi) = \sin 4 \pi \xi$, we have that
\[
S^{*}(t) w(\xi) = S(t) w(\xi)= e^{-16 \pi^2 t} \sin 4 \pi \xi,
\]
so choosing $y = \frac{1}{2}$ we obtain $\Phi^{*} S^{*}(t) w = 0$ and the estimators \eqref{eq: proj_est_continuous} and \eqref{eq: proj_est_discrete} cannot be defined.\\

Now, Theorem \ref{cor: consistency} provides the strong consistency of the estimators \eqref{eq: MC_est_continuous}, \eqref{eq: MC_est_discrete}, \eqref{eq: proj_est_continuous} and \eqref{eq: proj_est_discrete}. Moreover, if  $\frac{d}{4}< H< \frac{3}{4}$, Corollary \ref{cor: CLT} implies asymptotic normality of the estimators with the upper bounds for the speed of the convergence given in Theorem \ref{thm: B-E bounds for estimators - stationary} and Remark \ref{rem: B-E bounds for estimators - non-stationary}.

\section{Appendix}

\begin{Lemma}\label{lem: 4  cumul bound infty D}
Consider the sequence $F_n$ defined in \eqref{eq: def V_n F_n - infty}. The $3^{rd}$ and $4^{th}$ cumulants thereof satisfy the following bounds:

\[
\kappa_3(F_n) \leq \frac{C_1}{\sqrt{n}\; s_n^{3/2}} \biggl( \sum_{|i|<n} |Q(i)|^{3/2}_{\mathcal{L}_2} \biggr)^2,
\]
\[
\kappa_4(F_n) \leq \frac{C_2}{n\; s_n^2} \biggl( \sum_{|i|<n} |Q(i)|^{4/3}_{\mathcal{L}_2} \biggr)^3.
\]
\end{Lemma}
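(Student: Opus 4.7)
My plan is to represent $F_n$ as a normalized centered quadratic form of an $\mathcal{V}^n$-valued Gaussian vector and invoke the Fredholm-determinant expansion to obtain exact expressions for its cumulants, then control the resulting traces by Hölder's inequality for Schatten norms combined with Young's discrete convolution inequality. The argument is essentially the one used in \cite{Bierme_et_al 2012} for real-valued stationary Gaussian sequences, adapted to the $\mathcal{V}$-valued block-covariance structure.

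Stack $(Z_1,\dots,Z_n)$ into an element $Z$ of $\mathcal{V}^n$ and let $\mathcal{Q}$ denote the trace-class block covariance operator on $\mathcal{V}^n$ whose $(i,j)$-block is $Q(i-j)$. Then
\[
F_n \;=\; \frac{1}{\sqrt{n\,s_n}}\Bigl(\|Z\|_{\mathcal{V}^n}^{2}-\operatorname{Tr}(\mathcal{Q})\Bigr).
\]
From $\mathbb{E}\,e^{-t\|Z\|^{2}}=\det(I+2t\mathcal{Q})^{-1/2}$ and the series expansion of $-\tfrac12\log\det(I+2t\mathcal{Q})$ I read off
\[
\kappa_p\bigl(\|Z\|_{\mathcal{V}^n}^{2}\bigr)=(p-1)!\,2^{p-1}\operatorname{Tr}(\mathcal{Q}^{p}),\qquad p\ge 2,
\]
so that $|\kappa_p(F_n)|=(p-1)!\,2^{p-1}(n\,s_n)^{-p/2}\,|\operatorname{Tr}(\mathcal{Q}^{p})|$. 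In particular the constants that will appear are $8$ for $p=3$ and $48$ for $p=4$.

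Multiplying out the block product gives
\[
\operatorname{Tr}(\mathcal{Q}^{p})=\sum_{i_{1},\dots,i_{p}=1}^{n}\operatorname{Tr}\bigl(Q(i_{1}-i_{2})\,Q(i_{2}-i_{3})\cdots Q(i_{p}-i_{1})\bigr).
\]
Hölder's inequality for Schatten norms together with the elementary bound $\|A\|_{S_p}\leq \|A\|_{S_2}=|A|_{\mathcal{L}_2}$ valid for $p\ge 2$ yields $|\operatorname{Tr}(Q(j_{1})\cdots Q(j_{p}))|\leq \prod_{k=1}^{p}|Q(j_{k})|_{\mathcal{L}_2}$. Passing to the cyclic difference coordinates $j_{k}=i_{k}-i_{k+1}$ (so $j_{1}+\cdots+j_{p}=0$) and bounding the remaining free index $i_{1}$ by $n$, I obtain
\[
|\operatorname{Tr}(\mathcal{Q}^{p})|\leq n\,(a*a*\cdots *a)(0)\leq n\,\|a\|_{\ell^{p/(p-1)}}^{p},
\]
where $a(j):=|Q(j)|_{\mathcal{L}_2}\,\mathbf{1}_{|j|<n}$ and the last step is the $p$-fold discrete Young convolution inequality (output in $\ell^{\infty}$). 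Specialising to $p=3$ and $p=4$ gives $\|a\|_{\ell^{3/2}}^{3}=\bigl(\sum_{|i|<n}|Q(i)|_{\mathcal{L}_2}^{3/2}\bigr)^{2}$ and $\|a\|_{\ell^{4/3}}^{4}=\bigl(\sum_{|i|<n}|Q(i)|_{\mathcal{L}_2}^{4/3}\bigr)^{3}$, which combined with the cumulant formula above reproduces the desired bounds with $C_{1}=8$ and $C_{2}=48$.

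The only delicate step I foresee is the justification of the Fredholm-type expansion of $\log\det(I+2t\mathcal{Q})$ in the infinite-dimensional state space; this is handled by the observation that $\mathcal{Q}\in\mathcal{L}_{1}(\mathcal{V}^{n})$ (since each block $Q(i-j)$ is trace class) followed by a standard finite-rank truncation and passage to the limit. An alternative, more intrinsic route is to exploit the isometry between $\mathcal{H}^{\odot 2}$ and the Hilbert-Schmidt operators on $\mathcal{H}$ in order to apply directly the identity $\kappa_{p}(I_{2}(f))=(p-1)!\,2^{p-1}\operatorname{Tr}(A_{f}^{p})$ to $f=f_{n}$ from (\ref{eq: Fn via I_2}); the computation of $\operatorname{Tr}(A_{f_{n}}^{p})$ then leads to the same cyclic trace expression and the same final bounds.
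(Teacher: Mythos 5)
Your argument is correct, and the constants it produces ($C_1=8$, $C_2=48$) are consistent with (indeed sharper than) the paper's. The route differs from the paper's in its first half: the paper stays inside the Wiener-chaos framework, writes $F_n=I_2(f_n)$, invokes the contraction bounds $\kappa_4(F_n)\leq C\,|f_n\otimes_1 f_n|^2_{\mathcal{H}^{\otimes 2}}$ and $\kappa_3(F_n)=8\langle f_n, f_n\otimes_1 f_n\rangle_{\mathcal{H}^{\otimes 2}}$ from \cite{Bierme_et_al 2012}, and handles infinite dimensionality by projecting onto $\mathcal{V}^{(N)}$ and letting $N\to\infty$; you instead work directly with the block covariance $\mathcal{Q}$ on $\mathcal{V}^n$ and use the exact Gaussian quadratic-form identity $\kappa_p(\|Z\|^2)=(p-1)!\,2^{p-1}\operatorname{Tr}(\mathcal{Q}^p)$ (equivalently, the spectral decomposition $\|Z\|^2=\sum_k\lambda_k\xi_k^2$, which also legitimizes the log-determinant expansion without any truncation, since $\mathcal{Q}\ge 0$ with $\operatorname{Tr}\mathcal{Q}=n\,\operatorname{Tr} Q<\infty$ — note it is positivity plus finite trace of the diagonal blocks, not trace-classness of each block, that gives $\mathcal{Q}\in\mathcal{L}_1(\mathcal{V}^n)$). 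From that point on the two proofs coincide: both expand the cyclic trace sum, bound each factor by its Hilbert--Schmidt norm (your Schatten--H\"older step is the same estimate the paper applies to $\operatorname{Tr}(Q^{(N)}(i-i')Q^{(N)}(j-i)Q^{(N)}(j'-j)Q^{(N)}(i'-j'))$), and finish with the discrete Young convolution inequality as in Proposition 6.4 of \cite{Bierme_et_al 2012}. What your version buys is exact cumulants with explicit constants and no Malliavin calculus or $N\to\infty$ limiting argument in this lemma; what the paper's version buys is that the representation $F_n=I_2(f_n)$ and the associated chaos machinery are needed anyway in Section \ref{sec: Berry-Esseen bounds for sample moments} (e.g.\ for $L^{-1}I_2(f)=-\frac12 I_2(f)$ in Lemma \ref{lem: B-E bounds w/o s_n - inftyD}), so deriving the cumulant bounds from the same contraction formalism keeps the exposition uniform. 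Your second suggested route, via $\kappa_p(I_2(f_n))=(p-1)!\,2^{p-1}\operatorname{Tr}(A_{f_n}^p)$, is essentially the bridge between the two presentations.
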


\begin{proof}
To avoid technical difficulties with infinite sums, we shall work with projections to finite-dimensional subspaces and then pass to the limit. Recall the setting in the subsection \ref{subsec: infty D sequences}. For $N < \infty$ define $\mathcal{V}^{(N)}$ as the subspace of $\mathcal{V}$ spanned by its first $N$ orthonormal vectors  $\{e_{k}\}_{k=1}^{N}$. Consider the projections of the stationary Gaussian sequence $Z_i$ onto $\mathcal{V}^{(N)}$:
\[
Z_i^{(N)} := \sum_{k=1}^{N}\langle Z_i,e_k \rangle_{\mathcal{V}} \; e_k.
\]
Clearly, $(Z_i^{(N)}: i \in \mathbb{Z})$ is a stationary $\mathcal{V}^{(N)}$-valued centered Gaussian sequence with a covariance operator $Q^{(N)}$ and auto-covariance operators $Q^{(N)}(i-j)$ being restrictions of $Q$ and $Q(i-j)$ onto $\mathcal{V}^{(N)}$ in the following sense:
\[
\langle Q^{(N)} u, v \rangle_{\mathcal{V}} = \langle Q u, v \rangle_{\mathcal{V}}, \quad \langle Q^{(N)}(i-j) u, v \rangle_{\mathcal{V}} = \langle Q(i-j) u, v \rangle_{\mathcal{V}}, \quad \forall u,v \in \mathcal{V}^{(N)}.
\]
Following the approach in subsection \ref{subsec: infty D sequences}, for each $N$ we define an appropriate isonormal Gaussian process $\mathbb{X}^{(N)}$ on suitable Hilbert space $\mathcal{H}^{(N)}$   corresponding to the stationary sequence $(Z_i^{(N)}: i \in \mathbb{Z})$ and consider the sequences
\[
V_{n}^{(N)} :=  \frac{1}{\sqrt{n}} \sum_{i=1}^{n}(|Z^{(N)}_i|_{\mathcal{V}^{(N)}}^2 - {\rm Tr}(Q^{(N)})), \quad s^{(N)}_n := \mathbb{E}(V^{(N)}_n)^2, \quad F^{(N)}_n := \frac{V^{(N)}_n}{\sqrt{s^{(N)}_n}}.
\]
Rewrite $F^{(N)}_n$ as in \eqref{eq: Fn via I_2}:
\begin{equation*}
\begin{aligned}
&F^{(N)}_n = \frac{1}{\sqrt{n \; s^{(N)}_n}}\sum_{i=1}^{n} \sum_{k=1}^{N} (\langle Z^{(N)}_i,e_k\rangle_{\mathcal{V}^{(N)}}^2 - \langle Q e_k,e_k\rangle_{\mathcal{V}^{(N)}})\\
& = I_2\biggl( \frac{1}{\sqrt{n \; s^{(N)}_n}} \sum_{i=1}^{n} \sum_{k=1}^{N} h_{i,e_k}^{\otimes 2} \biggr) = I_2(f^{(N)}_n).
\end{aligned}
\end{equation*}

Recall that the $4^{th}$ cumulant of $F^{(N)}_n$ can be bounded above by the norm of the tensor contraction of $f^{(N)}_n$:
\begin{equation}\label{eq: cumul and contr}
\kappa_4(F^{(N)}_n) \leq C \; |f^{(N)}_n \otimes_1 f^{(N)}_n|^2_{(\mathcal{H}^{(N)})^{\otimes 2}},
\end{equation}
where $C$ is a universal constant (see \cite{Bierme_et_al 2012}).
The tensor contraction of order 1, denoted be $\otimes_1$, is defined as follows:
\[
f \otimes_1 g := \sum_{k=1}^{\infty} \langle f,h_k\rangle \otimes \langle g,h_k\rangle  \; \in (\mathcal{H}^{(N)})^{\otimes 2},\quad \forall f,g \in (\mathcal{H}^{(N)})^{\odot 2},
\]
where $\{h_k\}_{k=1}^{\infty}$ is an orthonormal basis of $\mathcal{H}^{(N)}$. \\
Now by the definition of $\otimes_1$ and its bilinearity, we can write
\begin{equation*}
\begin{aligned}
&f^{(N)}_n \otimes_1 f^{(N)}_n = \frac{1}{n \; s^{(N)}_n}  \sum_{i,j=1}^{n} \sum_{k,l=1}^{N} h_{i,e_k}^{\otimes 2} \otimes_1 h_{j,e_l}^{\otimes 2}\\
&= \frac{1}{n \; s^{(N)}_n}  \sum_{i,j=1}^{n} \sum_{k,l=1}^{N} \langle h_{i,e_k},h_{j,e_l}\rangle_{\mathcal{H}^{(N)}} \; (h_{i,e_k} \otimes h_{j,e_l}),
\end{aligned}
\end{equation*}
and, consequently, using the fact that
\[
\langle f_1 \otimes f_2 , g_1 \otimes g_2 \rangle_{(\mathcal{H}^{(N)})^{\otimes 2}} = \langle f_1,g_1\rangle_{\mathcal{H}^{(N)}} \; \langle f_2,g_2\rangle_{\mathcal{H}^{(N)}},
\]
and the relation for adjoint operators $(Q^{(N)}(i-j))^{*} =  Q^{(N)}(j-i)$, we calculate further:
\begin{equation*}
\begin{aligned}
&|f^{(N)}_n \otimes_1 f^{(N)}_n|^2_{(\mathcal{H}^{(N)})^{\otimes 2}} = \biggl( \frac{1}{n \; s^{(N)}_n}\biggr)^2  \sum_{i,i'=1}^{n}\sum_{j,j'=1}^{n} \sum_{k,k'=1}^{N} \sum_{l,l'=1}^{N} \langle h_{i,e_k},h_{j,e_l}\rangle_{\mathcal{H}^{(N)}} \langle h_{i',e_{k'}},h_{j',e_{l'}}\rangle_{\mathcal{H}^{(N)}}\\
&\langle h_{i,e_k},h_{i',e_{k'}}\rangle_{\mathcal{H}^{(N)}} \langle h_{j,e_l},h_{j',e_{l'}}\rangle_{\mathcal{H}^{(N)}} \\
&= \biggl( \frac{1}{n \; s^{(N)}_n}\biggr)^2  \sum_{i,i'=1}^{n}\sum_{j,j'=1}^{n} \sum_{k,k'=1}^{N} \sum_{l,l'=1}^{N} \langle Q^{(N)}(i-j) e_k,e_l \rangle_{\mathcal{V}^{(N)}} \langle Q^{(N)}(i'-j') e_{k'},e_{l'}\rangle_{\mathcal{V}^{(N)}}\\
&\langle Q^{(N)}(i-i')e_k,e_{k'}\rangle_{\mathcal{V}^{(N)}} \langle Q^{(N)}(j-j')e_l,e_{l'}\rangle_{\mathcal{V}^{(N)}}\\
& = \biggl( \frac{1}{n \; s^{(N)}_n}\biggr)^2  \sum_{i,i'=1}^{n}\sum_{j,j'=1}^{n} \sum_{k'=1}^{N} \sum_{l=1}^{N} \langle Q^{(N)}(i'-j') e_{k'},Q^{(N)}(j-j')e_l\rangle_{\mathcal{V}^{(N)}}\\
&\langle Q^{(N)}(j-i) e_l,  Q^{(N)}(i'-i)e_{k'}\rangle_{\mathcal{V}^{(N)}}  \\
& = \biggl( \frac{1}{n \; s^{(N)}_n}\biggr)^2  \sum_{i,i'=1}^{n}\sum_{j,j'=1}^{n} \sum_{k'=1}^{N} \langle Q^{(N)}(j'-j)Q^{(N)}(i'-j') e_{k'}, Q^{(N)}(i-j) Q^{(N)}(i'-i)e_{k'}\rangle_{\mathcal{V}^{(N)}} \\
& = \biggl( \frac{1}{n \; s^{(N)}_n}\biggr)^2  \sum_{i,i'=1}^{n}\sum_{j,j'=1}^{n} {\rm Tr}\biggl( Q^{(N)}(i-i') Q^{(N)}(j-i) Q^{(N)}(j'-j)Q^{(N)}(i'-j') \biggr)  \\
& \leq \biggl( \frac{1}{n \; s^{(N)}_n}\biggr)^2  \sum_{i,i'=1}^{n}\sum_{j,j'=1}^{n} |Q^{(N)}(i-i')|_{\mathcal{L}_2} \; |Q^{(N)}(j-i)|_{\mathcal{L}_2} \; |Q^{(N)}(j'-j)|_{\mathcal{L}_2} \; |Q^{(N)}(i'-j')|_{\mathcal{L}_2}.
\end{aligned}
\end{equation*}
If we denote $\rho(i):= |Q^{(N)}(i)|_{\mathcal{L}_2}$ for $i \in \mathbb{Z}$, we have $\rho(i) = \rho(-i)$ and
\[
|f^{(N)}_n \otimes_1 f^{(N)}_n|^2_{(\mathcal{H}^{(N)})^{\otimes 2}} \leq \biggl( \frac{1}{n \; s^{(N)}_n}\biggr)^2  \sum_{i,i'=1}^{n}\sum_{j,j'=1}^{n} \rho(i-i') \; \rho(j-i)\; \rho(j'-j) \; \rho(i'-j').
\]
Now we apply the method used in the proof of Proposition 6.4 in \cite{Bierme_et_al 2012} based on rewriting the last sums as convolutions and then applying the Young inequality. This leads to the inequality

\[
|f^{(N)}_n \otimes_1 f^{(N)}_n|^2_{(\mathcal{H}^{(N)})^{\otimes 2}} \leq \frac{1}{n \; (s^{(N)}_n)^2}\biggl( \sum_{|i|<n} \rho(i)^{4/3}\biggr)^3.
\]
Combining this with \eqref{eq: cumul and contr} and the fact that $|Q^{(N)}(i)|_{\mathcal{L}_2} \leq |Q(i)|_{\mathcal{L}_2}$ we obtain the bound
\[
\kappa_4(F^{(N)}_n) \leq C \; \frac{1}{n \; (s^{(N)}_n)^2} \biggl( \sum_{|i|<n} |Q(i)|^{4/3}_{\mathcal{L}_2} \biggr)^3,
\]
with a universal constant $C$. We can conclude the proof by noting that $\underset{N \to \infty}{\text{$L^{2}$-lim}} V^{(N)}_n = V_n$ implies \[\lim_{N\to \infty} s^{(N)}_n = s_n\]
and
\[\kappa_4(F_n) =  \lim_{N\to \infty} \kappa_4(F^{(N)}_n) \leq \frac{C}{n \; (s_n)^2} \biggl( \sum_{|i|<n} |Q(i)|^{4/3}_{\mathcal{L}_2} \biggr)^3.
\]

The proof of the bound for the $3^{rd}$ cumulant runs similarly. With reference to \cite{Bierme_et_al 2012} we start from the following equality
\[
\kappa_3(F^{(N)}_n) = 8 \langle f^{(N)}_n, f^{(N)}_n \otimes_1 f^{(N)}_n \rangle_{(\mathcal{H}^{(N)})^{\otimes 2}}.
\]
Using the expressions for $f^{(N)}_n$ and $f^{(N)}_n \otimes_1 f^{(N)}_n $ derived above we can continue the calculation similarly to the $4^{th}$ cumulant case:

\begin{equation*}
\begin{aligned}
&\kappa_3(F^{(N)}_n) = \frac{8}{(n\; s^{(N)}_n)^{3/2}} \sum_{i,i'=1}^{n}\sum_{j=1}^{n} \sum_{k,k'=1}^{N} \sum_{l=1}^{N} \langle h_{i,e_{k}},h_{j,e_{l}}\rangle_{\mathcal{H}^{(N)}} \langle h_{i',e_{k'}}^{\otimes 2}, h_{i,e_{k}} \otimes h_{j,e_{l}} \rangle_{(\mathcal{H}^{(N)})^{\otimes 2}}\\
&=\frac{8}{(n\; s^{(N)}_n)^{3/2}} \sum_{i,i'=1}^{n}\sum_{j=1}^{n} \sum_{k,k'=1}^{N} \sum_{l=1}^{N} \langle h_{i,e_{k}},h_{j,e_{l}}\rangle_{\mathcal{H}^{(N)}} \langle h_{i',e_{k'}}, h_{i,e_{k}}\rangle_{\mathcal{H}^{(N)}} \langle h_{i',e_{k'}}, h_{j,e_{l}}\rangle_{\mathcal{H}^{(N)}} \\
&\leq \frac{8}{(n\; s^{(N)}_n)^{3/2}} \sum_{i,i'=1}^{n}\sum_{j=1}^{n} |Q^{(N)}(i-i')|_{\mathcal{L}_2} \; |Q^{(N)}(j-i)|_{\mathcal{L}_2} \; |Q^{(N)}(i'-j)|_{\mathcal{L}_2}.
\end{aligned}
\end{equation*}
If we denote again $\rho(i):= |Q^{(N)}(i)|_{\mathcal{L}_2}$ for $i \in \mathbb{Z}$, we can follow the corresponding calculations in \cite{Bierme_et_al 2012} (based on the Young inequality for convolutions) and pass to the limit $N \to \infty$ to obtain the required bound.
\end{proof}

\begin{Lemma}\label{lem: integral for Q}
Consider constants $\rho >0, H \in (0,1)$ and $\gamma \in [0,H)$. Then for sufficiently large $t>0$:
\[
\int_{0}^{t} \int_{-\infty}^{0} e^{-\rho (-r)} (-r)^{-\gamma} \; e^{-\rho (t-s)} (t-s)^{-\gamma} \; H(2H-1)(s-r)^{2H-2}drds \leq C t^{2H-2}.
\]
\end{Lemma}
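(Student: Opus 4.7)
The plan is to substitute $u=-r$ and $v=t-s$ so that the integral becomes
\[
I(t) \;=\; |H(2H-1)| \int_{0}^{\infty}\!\!\int_{0}^{t} e^{-\rho u}u^{-\gamma}\, e^{-\rho v}v^{-\gamma}\,(t+u-v)^{2H-2}\, dv\, du,
\]
in which the polynomial singularity now sits cleanly at $\{u+t-v=0\}$ while the two $e^{-\rho\cdot}\,\cdot^{-\gamma}$-factors are small as soon as $u$ or $v$ becomes macroscopic. The key observation is that, since $2H-2<0$, an upper bound on $(t+u-v)^{2H-2}$ requires a \emph{lower} bound on its base; this motivates splitting the domain of integration according to whether $u$ and $v$ are small compared to $t$.

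Concretely, I will split the $(u,v)$-region into a main part $D_1=\{u\le t/4,\ v\le t/4\}$ and the two exponentially small tails $D_2=\{u>t/4,\ v\le t\}$, $D_3=\{u\le t/4,\ t/4<v\le t\}$. On $D_1$ we have $t+u-v\ge 3t/4$, so
\[
(t+u-v)^{2H-2}\le (3t/4)^{2H-2},
\]
and the remaining $(u,v)$-integral factorises into two copies of $\int_{0}^{\infty}e^{-\rho u}u^{-\gamma}du = \Gamma(1-\gamma)\rho^{\gamma-1}$, which is finite because $\gamma<H<1$. This produces the desired contribution of order $t^{2H-2}$.

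For the tail $D_2$, I use $v\le t$ to get $t+u-v\ge u$, hence $(t+u-v)^{2H-2}\le u^{2H-2}$, and estimate
\[
\int_{t/4}^{\infty} e^{-\rho u} u^{2H-2-\gamma}\,du
 \;\le\; (t/4)^{2H-2-\gamma}\,\rho^{-1} e^{-\rho t/4},
\]
using that the exponent $2H-2-\gamma$ is negative. Together with the harmless $v$-integral this gives a bound of order $e^{-\rho t/4}t^{2H-2-\gamma}$, which is $o(t^{2H-2})$ for large $t$. The domain $D_3$ is handled symmetrically: on $\{v>t/4, u\le t/4\}$ one has $t+u-v\ge t-v\ge 0$ but more usefully one shifts the exponential weight $e^{-\rho v}$ and uses $v^{-\gamma}\le (t/4)^{-\gamma}$ to gain an exponentially small prefactor $e^{-\rho t/8}$, again dominating by $Ct^{2H-2}$ for $t$ large enough.

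The only mildly delicate point is keeping track of the sign of $2H-2-\gamma$ (and the monotonicity of $u\mapsto u^{2H-2-\gamma}$ that it implies) when bounding the tail on $D_2$; since $H<1$ and $\gamma\ge 0$ this exponent is automatically negative, and no subcase splitting on $H$ is required. Combining the three estimates yields $I(t)\le C t^{2H-2}$ for all sufficiently large $t$, as claimed.
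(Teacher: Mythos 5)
Your argument is correct, and it takes a genuinely different (and more self-contained) route than the paper. After substituting $u=-r$, $v=t-s$ you split the quadrant at the single scale $t/4$ into three blocks and handle everything by elementary estimates: the block $\{u\le t/4,\,v\le t/4\}$, where both exponential weights are integrable uniformly in $t$ and the kernel is at most $(3t/4)^{2H-2}$, yields the $t^{2H-2}$ rate, while the two remaining blocks are exponentially small. The paper instead stays in the variables $(r,s)$, fixes $\delta\in(1/\rho,t)$ and splits into four regions ($s$ near $0$; intermediate $s$ with $r<-\delta$; intermediate $s$ with $r\in(-\delta,0)$; $s$ near $t$); the intermediate region with $r<-\delta$ is disposed of by invoking the estimate from the proof of Theorem 2.3 in \cite{Cheridito_et_al 2003}, and the region $r\in(-\delta,0)$ by a dominated convergence argument. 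Your version avoids any appeal to \cite{Cheridito_et_al 2003} and locates the main contribution transparently; the paper's version buys brevity by reusing the known Ornstein--Uhlenbeck-type estimate. Working with $|H(2H-1)|$ as you do is also slightly stronger and is what is really needed downstream in Lemma \ref{lem: bound for Q}.

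One caveat: the delicate block is $D_3$, not $D_2$ as you claim. On $D_3$ the factors $u^{-\gamma}$ and $(t+u-v)^{2H-2}$ blow up simultaneously at the corner $(u,v)=(0,t)$ (i.e.\ $r\to0^-$, $s\to0^+$), so after extracting the prefactor (e.g.\ $e^{-\rho v}\le e^{-\rho t/8}e^{-\rho v/2}$ for $v>t/4$) you must still check that the residual integral grows at most polynomially in $t$. It does: $\int_{t/4}^{t}(t+u-v)^{2H-2}\,dv=\int_{u}^{u+3t/4}x^{2H-2}\,dx$ is bounded by $C\,t^{2H-1}$ when $H>\tfrac12$, by $C\,u^{2H-1}$ when $H<\tfrac12$ (with a logarithm when $H=\tfrac12$), and in the singular case the remaining $u$-integral $\int_0^{t/4}u^{2H-1-\gamma}\,du$ converges precisely because $\gamma<2H$, which your hypothesis $\gamma<H$ guarantees. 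With that verification added, the $D_3$ contribution is an exponential factor times a power of $t$, hence eventually below $Ct^{2H-2}$, and the proof is complete.
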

\begin{proof}
Fix $\delta \in (1/\rho , t)$ and divide the range of integration into four disjoint areas:
\begin{equation*}
\begin{aligned}
&\int_{0}^{t} \int_{-\infty}^{0} e^{-\rho (-r)} (-r)^{-\gamma} \; e^{-\rho (t-s)} (t-s)^{-\gamma} \; H(2H-1)(s-r)^{2H-2}drds \\
& = \int_{0}^{1/\rho} \int_{-\infty}^{0} ... drds +
\int_{1/\rho}^{t-\delta} \int_{-\infty}^{-\delta} ... drds +
\int_{1/\rho}^{t-\delta} \int_{-\delta}^{0} ... drds \\
&+\int_{t-\delta}^{t} \int_{-\infty}^{0} ... drds.
\end{aligned}
\end{equation*}
We shall treat each integral separately. Here $C$ stands for a positive constant (independent of $t$), which may change from line to line.
\begin{equation*}
\begin{aligned}
&\int_{0}^{1/\rho} \int_{-\infty}^{0} e^{-\rho (-r)} (-r)^{-\gamma} \; e^{-\rho (t-s)} (t-s)^{-\gamma} \; H(2H-1)(s-r)^{2H-2}drds  \\
&\leq \frac{C e^{-\rho (t-1/\rho)}}{(t-1/\rho)^\gamma}\int_{0}^{1/\rho} \int_{-\infty}^{0} e^{-\rho (-r)} (-r)^{-\gamma}(s-r)^{2H-2}drds \leq C e^{-\rho t}.
\end{aligned}
\end{equation*}
The second integral (after slight modification) can be treated as the corresponding term in the proof of Theorem 2.3 in \cite{Cheridito_et_al 2003}, which justifies the last inequality in the following calculations:
\begin{equation*}
\begin{aligned}
&\int_{1/\rho}^{t-\delta} \int_{-\infty}^{-\delta}  e^{-\rho (-r)} (-r)^{-\gamma} \; e^{-\rho (t-s)} (t-s)^{-\gamma} \; H(2H-1)(s-r)^{2H-2}drds  \\
&\leq C \int_{1/\rho}^{t-\delta} \int_{-\infty}^{-\delta}  e^{-\rho (-r)} \; e^{-\rho (t-s)} \; (s-r)^{2H-2}drds \\
&\leq C \int_{1/\rho}^{t} \int_{-\infty}^{0}  e^{-\rho (-r)} \; e^{-\rho (t-s)} \; (s-r)^{2H-2}drds \leq
C t^{2H-2}.
\end{aligned}
\end{equation*}
For the third integral, note that
\begin{equation*}
\begin{aligned}
&\int_{1/\rho}^{t-\delta} \int_{-\delta}^{0}  e^{-\rho (-r)} (-r)^{-\gamma} \; e^{-\rho (t-s)} (t-s)^{-\gamma} \; H(2H-1)(s-r)^{2H-2}drds  \\
&\leq C \int_{1/\rho}^{t-\delta}  e^{-\rho (t-s)}s^{2H-2}  \left(\int_{-\delta}^{0}(-r)^{-\gamma}dr \right) ds \leq C \int_{1/\rho}^{t-\delta}  e^{-\rho (t-s)}s^{2H-2} ds.
\end{aligned}
\end{equation*}
To see that this can be bounded above by $C t^{2H-2}$, we can observe that
\begin{equation*}
\begin{aligned}
&\frac{\int_{1/\rho}^{t-\delta}  e^{-\rho (t-s)}s^{2H-2} ds}{t^{2H-2}} \leq \int_{1/\rho}^{t}  e^{-\rho (t-s)}\left(\frac{s}{t}\right)^{2H-2} ds = \int_{0}^{t-1/\rho}  e^{-\rho y}\left(1- \frac{y}{t}\right)^{2H-2} dy \\
& = \int_{0}^{\infty}  e^{-\rho y}\left(1- \frac{y}{t}\right)^{2H-2} \mathbb{I}_{[0,t-1/\rho]}(y) dy \overset{t\to \infty}{\longrightarrow} \int_{0}^{\infty}  e^{-\rho y} dy < \infty,
\end{aligned}
\end{equation*}
where $\mathbb{I}$ denotes the indicator function and the last limit follows from the Dominated Convergence Theorem with integrable majorant
\[
e^{-\rho y}\left(1- \frac{y}{t}\right)^{2H-2} \mathbb{I}_{[0,t-1/\rho]}(y) \leq e^{-\rho y}\left(1- \frac{y}{y+1/\rho}\right)^{2H-2} = (1/\rho)^{2H-2} e^{-\rho y} \left(y+ 1/\rho\right)^{2-2H}.
\]
Now let us find the upper bound for the fourth integral:
\begin{equation*}
\begin{aligned}
&\int_{t-\delta}^{t} \int_{-\infty}^{0} e^{-\rho (-r)} (-r)^{-\gamma} \; e^{-\rho (t-s)} (t-s)^{-\gamma} \; H(2H-1)(s-r)^{2H-2}drds  \\
&\leq C (t-\delta)^{2H-2} \int_{t-\delta}^{t} \int_{-\infty}^{0}  e^{-\rho (-r)} (-r)^{-\gamma} \; (t-s)^{-\gamma} \; drds \\
& = C (t-\delta)^{2H-2} \left(\int_{0}^{\delta} u^{-\gamma} du \right) \left( \int_{0}^{\infty}  e^{-\rho v} v^{-\gamma} dv \right) \leq C t^{2H-2},
\end{aligned}
\end{equation*}
which completes the proof.
\end{proof}

\end{document}